\newtheorem{thm}{Theorem}[section]
\newtheorem*{thmA'}{Theorem A'}
\newtheorem{cor}[thm]{Corollary}
\newtheorem{con}[thm]{Conjecture}
\newtheorem{lem}[thm]{Lemma}
\newtheorem{prop}[thm]{Proposition}
\theoremstyle{definition}
\theoremstyle{remark}
\newtheorem{rem}{Remark}
\numberwithin{equation}{section}
\renewcommand{\(}{\left(}
\renewcommand{\)}{\right)}
\renewcommand{\widetilde}{\tilde}
\renewcommand{\-}{\overline}
\renewcommand{\a}{\alpha}
\renewcommand{\b}{\beta}
\newcommand{\g}{\gamma}
\renewcommand{\d}{\delta}
\newcommand{\e}{\varepsilon}
\renewcommand{\k}{\kappa}
\renewcommand{\l}{\lambda}
\newcommand{\D}{\Delta}
\renewcommand{\t}{\theta}
\newcommand{\s}{\sigma}
\newcommand{\G}{\Gamma}
\newcommand{\ra}{\rightarrow}
\begin{document}
	\title[Alexandrov-Fenchel inequalities in the half-space]{A complete family of Alexandrov-Fenchel inequalities for convex capillary hypersurfaces in the half-space}	
\author[Y. Hu]{Yingxiang Hu}
\address{School of Mathematical Sciences, Beihang University, Beijing 100191, P.R. China}
\email{\href{mailto:huyingxiang@buaa.edu.cn}{huyingxiang@buaa.edu.cn}}
\author[Y. Wei]{Yong Wei}
\address{School of Mathematical Sciences, University of Science and Technology of China, Hefei 230026, P.R. China}
\email{\href{mailto:yongwei@ustc.edu.cn}{yongwei@ustc.edu.cn}}
\author[B. Yang]{Bo Yang}
\address{Department of Mathematical Sciences, Tsinghua University, Beijing 100084, P.R. China}
\email{\href{mailto:ybo@tsinghua.edu.cn}{ybo@tsinghua.edu.cn}}
\author[T. Zhou]{Tailong Zhou}
\address{School of Mathematics, Sichuan University, Chengdu 610065, Sichuan, P.R. China}
\email{\href{mailto:zhoutailong@scu.edu.cn}{zhoutailong@scu.edu.cn}}
	
\subjclass[2010]{53C44, 53C21, 35K93, 52A40}
\keywords{Alexandrov-Fenchel inequalities, Capillary hypersurfaces, Half-space, Locally constrained curvature flow}
\thanks{The research was surpported by National Key Research and Development Program of China 2021YFA1001800 and 2020YFA0713100, National Natural Science Foundation of China NSFC11721101 and NSFC12101027.
}
	\begin{abstract}
In this paper, we study the locally constrained inverse curvature flow for hypersurfaces  in the half-space with $\theta$-capillary boundary, which was recently introduced by Wang-Weng-Xia \cite{Wang-Weng-Xia2022}. Assume that the initial hypersurface is strictly convex with the contact angle $\theta\in (0,\pi/2]$. We prove that the solution of the flow remains to be strictly convex for $t>0$, exists for all positive time and converges smoothly to a spherical cap. As an application, we prove a complete family of Alexandrov-Fenchel inequalities for convex capillary hypersurfaces in the half-space with the contact angle $\theta\in(0,\pi/2]$.
Along the proof, we develop a new tensor maximum principle for parabolic equations on compact manifold with proper Neumann boundary condition.
	\end{abstract}	
	\maketitle
		\tableofcontents

\section{Introduction}
The classical Alexandrov-Fenchel inequalities for convex domains in Euclidean space are important in convex geometry. For a smooth bounded domain $\widehat{\Sigma}\subset \mathbb R^{n+1}$, we denote $\Sigma=\partial \widehat{\Sigma}$ the boundary of $\widehat{\Sigma}$. Let $\k(x)=(\k_1(x),\cdots,\k_n(x))$ be the principal curvatures of $\Sigma\subset \mathbb R^{n+1}$ at the point $x$, and $H_k(\k)$ is the $k$th normalized mean curvature of $\Sigma$. The quermassintegrals of $\widehat{\Sigma}$ are given by
\begin{align}\label{s1:def-quermassintegral-closed}
W_0(\widehat{\Sigma}):=|\widehat{\Sigma}|, \quad W_{k+1}(\widehat{\Sigma}):=\frac{1}{n+1}\int_{\Sigma} H_{k} dA, \quad 0\leq k\leq n.
\end{align}
The Alexandrov-Fenchel inequalities for quermassintegrals (see \cite{Schn}) of a convex domain $\widehat{\Sigma}$ state as
\begin{align}\label{s1:AF-inequality}
\frac{W_k(\widehat{\Sigma})}{|\mathbb B^{n+1}|} \geq \(\frac{W_\ell(\widehat{\Sigma})}{|\mathbb B^{n+1}|}\)^\frac{n+1-k}{n+1-\ell}, \quad 0\leq \ell<k\leq n,
\end{align}
with equality holds if and only if $\widehat{\Sigma}$ is a ball. The inequality \eqref{s1:AF-inequality} is a natural generalization of the classical isoperimetric inequality, which corresponds to the case $k=1, \ell=0$ of \eqref{s1:AF-inequality}. Using a quermassintegral preserving curvature flow, McCoy \cite{McC05} provides a new proof of the inequality \eqref{s1:AF-inequality} for smooth convex domains. Later,  Guan and Li \cite{GL09} applied the inverse curvature flow \cite{Ger1990,Urbas1991} to show that the inequality \eqref{s1:AF-inequality} holds for smooth $(k-1)$-convex and starshaped domains. Using the similar idea, there are many recent work on the Alexandrov-Fenchel type inequalities for smooth domains in the hyperbolic space \cite{ACW20,AHL2020,BGL,GL-2021,GWW14,HL18,HLW2020,LWX14,Wang-Xia2014} and in the sphere \cite{CS22,ChenGLS22,MS15,WeiX15}.

In this paper, we focus on the generalizations of \eqref{s1:AF-inequality} to the hypersurfaces with capillary boundary in the half-space $\overline{\mathbb{R}}_+^{n+1}$. Let $\Sigma\subset \-{\mathbb R}^{n+1}_{+}$ be a smooth properly embedded hypersurface in the half-space $\-{\mathbb R}^{n+1}_{+}$ with boundary $\partial \Sigma$ supported on $\partial \-{\mathbb R}^{n+1}_{+}$, where by proper embeddedness we mean that $\operatorname{int}(\Sigma)\subset \mathbb R^{n+1}_{+}$ and $\partial \Sigma \subset \partial \-{\mathbb R}^{n+1}_{+}$. Let $\widehat{\Sigma}$ be the bounded domain enclosed by $\Sigma$ and the hyperplane $\partial\-{\mathbb R}^{n+1}_{+}$. Then the boundary of $\widehat{\Sigma}$ consists of two parts: one is $\Sigma$ and the other one lying on $\partial \overline{\mathbb{R}}^{n+1}_+$ is denoted by $\widehat{\partial\Sigma}$. See Figure \ref{fig1} in \S \ref{sec:2}. We also denote by $\partial\Sigma=\Sigma\cap \widehat{\partial\Sigma}$ as the common boundary of $\Sigma$ and $\widehat{\partial\Sigma}$. Suppose that $\Sigma$ intersects with $\partial \overline{\mathbb{R}}_+^{n+1}$ at a constant contact angle $\theta\in (0,\pi)$. Then the following relative isoperimetric inequality in the half-space holds (see \cite[\S 19.3]{Maggi}):
\begin{align}\label{s1.iso}
\frac{|\Sigma|-\cos\theta |\widehat{\partial\Sigma}|}{|\mathbb S^n_{\theta}|-\cos\theta|\widehat{\partial \mathbb{S}^n_\theta}|} \geq \(\frac{|\widehat{\Sigma}|}{|\mathbb B_{\theta}^{n+1}|}\)^\frac{n}{n+1}.
\end{align}
The equality holds in \eqref{s1.iso} if and only if $\Sigma$ is homothetic to a spherical cap $\mathbb S^n_{\theta}$ with contact angle $\theta$. See \eqref{s2:spherical-caps} for the definition of the spherical caps, where $\mathbb S^n_{\theta}=C_{1,\theta}(e)$.

Recently, the following geometric functionals were introduced by Wang, Weng and Xia \cite{Wang-Weng-Xia2022}, which can be considered as the suitable quermassintegrals for capillary hypersurfaces in the half-space $\-{\mathbb{R}}^{n+1}_{+}$ with constant contact angle $\t$:
\begin{align}\label{s1:quermassintegral-capillary}
\mathcal{V}_{0,\t}(\widehat{\Sigma}):=&|\widehat{\Sigma}|, \nonumber\\
\mathcal{V}_{1,\t}(\widehat{\Sigma}):=&\frac{1}{n+1}(|\Sigma|-\cos \t |\widehat{\partial \Sigma}|), \nonumber\\
\mathcal{V}_{k+1,\t}(\widehat{\Sigma}):=&\frac{1}{n+1}\int_{\Sigma}H_k dA-\frac{\cos\t \sin^k\t}{n}\int_{\partial \Sigma}H_{k-1}^{\partial\Sigma}ds, \quad 1\leq k\leq n,
\end{align}
where $H_{k-1}^{\partial \Sigma}$ is the normalized $(k-1)$th mean curvature of $\partial \Sigma \subset \mathbb R^n$. In particular, one has
\begin{align*}
\mathcal{V}_{n+1,\t}(\widehat{\Sigma})=\frac{1}{n+1}\int_{\Sigma}H_ndA-\cos\t \sin^n\t \frac{\omega_{n-1}}{n(n+1)}=(n+1)\omega_\t,
\end{align*}
which can be considered as a Gauss-Bonnet type result for capillary hypersurfaces in $\-{\mathbb{R}}^{n+1}_{+}$  with contact angle $\t$, where $\omega_{n-1}=|\mathbb{S}^{n-1}|$ and $\omega_\theta=|\mathbb S^n_{\theta}|-\cos\theta|\widehat{\partial \mathbb{S}^n_\theta}|$.

The quermassintegrals defined above satisfy the following variational formulas (\cite[Theorem 1.1]{Wang-Weng-Xia2022}): Let $\Sigma_t\subset \-{\mathbb R}^{n+1}_{+}$ be a family of smooth, embedded, capillary hypersurfaces with a fixed contact angle $\t$, which are given by the embeddings $x(\cdot,t):M\ra \-{\mathbb R}^{n+1}_{+}$ and satisfy
\begin{align*}
\(\partial_tx\)^\bot=\mathcal{F}\nu
\end{align*}
for some speed function $\mathcal{F}$. Then for $0\leq k\leq n$,
\begin{align}\label{s1:variation-formula}
\frac{d}{dt}\mathcal{V}_{k,\t}(\widehat{\Sigma}_t)=\frac{n+1-k}{n+1}\int_{\Sigma_t}\mathcal{F} H_k dA_t,
\end{align}
and in particular
\begin{align*}
\frac{d}{dt}\mathcal{V}_{n+1,\t}(\widehat{\Sigma}_t)=0.
\end{align*}
Let $\mathbb B_\t^{n+1}:=\{x\in \mathbb B^{n+1}| \langle x,e_{n+1}\rangle >\cos\t \}$. The following conjecture was proposed in \cite[Conjecture 1.5]{Wang-Weng-Xia2022}, which could be viewed as the relative Alexandrov-Fenchel type inequalities for quermassintegrals in the half-space.
\begin{con}[\cite{Wang-Weng-Xia2022}]\label{s1.Ques}
For $n\geq 2$, let $\Sigma\subset \-{\mathbb R}^{n+1}_{+}$ be a convex hypersurface with capillary boundary supported on $\partial\-{\mathbb R}_{+}^{n+1}$ at a contact angle $\theta\in (0,\pi)$. Then there holds
	\begin{align}\label{s1:AF-inequality-capillary-conj}
	\frac{\mathcal{V}_{k,\t}(\widehat{\Sigma})}{|\mathbb B_\t^{n+1}|} \geq \(\frac{\mathcal{V}_{\ell,\t}(\widehat{\Sigma})}{|\mathbb B_\t^{n+1}|}\)^\frac{n+1-k}{n+1-\ell}, \quad 0\leq \ell<k\leq n
	\end{align}
with equality holding in \eqref{s1:AF-inequality-capillary-conj} if and only if $\Sigma$ is a spherical cap  in the half-space with $\t$-capillary boundary.
\end{con}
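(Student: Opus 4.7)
The plan is to deduce the inequality from a monotonicity argument along the locally constrained inverse curvature flow of Wang-Weng-Xia \cite{Wang-Weng-Xia2022}, exploiting the fact that, for a capillary hypersurface, the relative quermassintegrals in \eqref{s1:quermassintegral-capillary} satisfy the clean variational identities \eqref{s1:variation-formula}. The strategy will be: for each pair $0\leq \ell<k\leq n$, choose a flow whose speed $\mathcal{F}$ preserves $\mathcal{V}_{\ell,\t}$ and makes the ratio $\mathcal{V}_{k,\t}/\mathcal{V}_{\ell,\t}^{(n+1-k)/(n+1-\ell)}$ monotone; then establish long-time existence and smooth convergence to a spherical cap $\mathbb{S}^n_\t$ of appropriate radius, on which the conjectured inequality is an equality; then pass to the limit.

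Concretely, I would fix $0\leq \ell<k\leq n$ and select a locally constrained flow in the Wang-Weng-Xia family whose speed combines a Newton quotient of the form $H_{k-1}/H_k$ with a support-function-type correction adapted to the contact angle $\t$ so that, by \eqref{s1:variation-formula}, $\mathcal{V}_{\ell,\t}(\widehat{\Sigma}_t)$ is preserved in time. I would then differentiate $\mathcal{V}_{k,\t}$ along the flow using \eqref{s1:variation-formula} and reduce the resulting sign question to an interior Newton-MacLaurin inequality on $\Sigma_t$ together with analogous boundary Newton-MacLaurin inequalities for the $H_{j-1}^{\partial\Sigma_t}$ terms that enter \eqref{s1:quermassintegral-capillary}; convexity of $\Sigma_t$ together with the capillary condition should yield $\frac{d}{dt}\mathcal{V}_{k,\t}\geq 0$ with equality precisely on spherical caps. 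The paper's main convergence theorem (long-time existence, preservation of strict convexity, and smooth convergence of the flow to $\mathbb{S}^n_\t$ of radius determined by $\mathcal{V}_{\ell,\t}(\widehat{\Sigma}_0)$) would then supply the limiting value of $\mathcal{V}_{k,\t}$, at which \eqref{s1:AF-inequality-capillary-conj} holds with equality. Combining the monotonicity with the limit value gives the inequality for the initial data, and the rigidity statement follows by tracing the equality case back through the Newton-MacLaurin step, which forces all principal curvatures to coincide and, together with the capillary boundary condition, identifies $\Sigma$ as a spherical cap.

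The main obstacle, and the heart of the argument, is the convergence part, and within it the preservation of strict convexity along the flow. The second fundamental form satisfies a fully nonlinear parabolic system coupled to a nontrivial Neumann-type boundary condition at $\partial\Sigma$, and Hamilton's classical tensor maximum principle does not apply directly: one must verify a null-eigenvector condition not only for the interior evolution but also for the boundary operator induced by the capillary condition. This is precisely what the new tensor maximum principle announced in the abstract is designed to provide. Once convexity is preserved, the remaining $C^0$, $C^1$ and higher estimates should follow from Krylov-Safonov and Schauder theory for oblique derivative problems, by techniques analogous to those used in the closed case treated in \cite{GL09}; but the boundary tensor maximum principle is the non-routine step on which the entire monotonicity argument ultimately rests.
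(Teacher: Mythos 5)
Your strategy matches the paper's approach, but several details need correction. First, the paper's flow \eqref{s1:BGL-flow} with speed $(1+\cos\theta\langle\nu,e\rangle)H_{k-1}/H_k - \langle x,\nu\rangle$ preserves $\mathcal{V}_{k,\theta}$ (by the Minkowski formula \eqref{s1:Minkowski-formula}) and monotonically \emph{increases} $\mathcal{V}_{\ell,\theta}$ for $\ell<k$; you have this backwards when you say the flow should preserve $\mathcal{V}_{\ell,\theta}$ while using the $H_{k-1}/H_k$ quotient. (One could alternatively use speed $(1+\cos\theta\langle\nu,e\rangle)H_{\ell-1}/H_\ell-\langle x,\nu\rangle$ to preserve $\mathcal{V}_{\ell,\theta}$ and decrease $\mathcal{V}_{k,\theta}$, but then the Newton quotient would have indices $\ell-1,\ell$ rather than $k-1,k$.) Second, there is no need for separate ``boundary Newton--MacLaurin inequalities'' for the $H_{j-1}^{\partial\Sigma}$ terms: the variational formula \eqref{s1:variation-formula} already absorbs all boundary contributions into a single interior integral, and monotonicity reduces purely to the interior pointwise inequality $H_{k-1}H_\ell\geq H_kH_{\ell-1}$ together with $H_\ell>0$. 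Third, the case $\ell=0$ cannot be handled by the same Minkowski-formula manipulation (it is not a quotient comparison of Newton quantities) — the paper gets it by concatenating the $\ell=1$ case with the relative isoperimetric inequality \eqref{s1.iso}.

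A more substantive point: the statement you are trying to prove is the full Conjecture \ref{s1.Ques} for $\theta\in(0,\pi)$, but the argument you outline — and the one the paper actually carries out — only works for $\theta\in(0,\pi/2]$. The restriction enters in an essential way in the preservation of convexity: the boundary estimates in Proposition \ref{s2:prop-second-fundamental-form}(4) produce terms like $\cot\theta\, h_{\alpha\alpha}(h_{\mu\mu}-h_{\alpha\alpha})$ whose sign is controlled only when $\cot\theta\geq 0$, and the same sign is used when checking the Neumann null-eigenvector condition \eqref{eq-bou} for the tensor $S_{ij}=h_{ij}-\varepsilon Fg_{ij}$. You correctly identify the boundary tensor maximum principle as the crux, but you should be explicit that this step, as currently known, restricts the contact angle to $(0,\pi/2]$, so the method proves Theorem \ref{s1:thm-AF-ineq} rather than the full conjecture. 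Finally, to pass from strictly convex to merely convex $\Sigma$ you also need an approximation lemma (the paper has a short mean-curvature-flow argument in the appendix showing weakly convex capillary hypersurfaces instantly become strictly convex), which your proposal does not mention.
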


For this purpose, Wang, Weng and Xia \cite{Wang-Weng-Xia2022} introduced a new locally constrained inverse curvature flows: Let $e=-e_{n+1}$, where $e_{n+1}$ is the $(n+1)$th coordinate vector in $\mathbb R_+^{n+1}$. Let $x:M\ra \Sigma \subset \-{\mathbb R}^{n+1}_{+}$ be the embedding of $\Sigma=x(M)\subset \-{\mathbb R}^{n+1}_{+}$ with its boundary $x|_{\partial M}:\partial M \ra \partial \Sigma \subset \partial\overline{\mathbb R}^{n+1}_+$ and $\nu$ its unit normal vector field.  For $k=1,\cdots,n$, we consider a family of smooth immersions $x(\cdot,t)$ of capillary hypersurfaces in the half-space with contact angle $\t\in (0,\frac{\pi}{2}]$, which evolves by
\begin{align}\label{s1:BGL-flow}
\left\{\begin{aligned}(\partial_t x)^\bot=&\((1+\cos\t\langle \nu,e\rangle)\frac{H_{k-1}}{H_k}-\langle x,\nu\rangle\)\nu, &\text{in $M\times [0,T)$},\\
\langle \-N\circ x,\nu\rangle=&\cos(\pi-\t), &\text{on $\partial M\times [0,T)$},\\
x(\cdot,0)=&x_0(\cdot), &\text{on $M$}.
\end{aligned}\right.
\end{align}
The introduction of this flow is motivated by a similar flow proposed earlier by Brendle, Guan and Li \cite{BGL} for closed hypersurfaces in the space forms, and is based on the Minkowski type formula \cite[Proposition 2.5]{Wang-Weng-Xia2022}:
\begin{align}\label{s1:Minkowski-formula}
\int_{\Sigma}H_{k-1}(1+\cos\t\langle \nu,e\rangle) dA=\int_{\Sigma} H_k\langle x,\nu\rangle dA, \quad 1\leq k\leq n,
\end{align}
 for $\t$-capillary hypersurfaces in the half-space. It follows from the variation formula \eqref{s1:variation-formula} that
\begin{align}\label{s1:evol-V-ell}
\frac{d}{dt}\mathcal{V}_{k,\t}(\widehat{\Sigma}_t)=\frac{n+1-k}{n+1}\int_{\Sigma_t}\((1+\cos\t\langle \nu,e\rangle) H_{k-1}-\langle x,\nu\rangle H_{k}\)dA_t=0,
\end{align}
and for $1\leq \ell<k$
\begin{align}\label{s1:evol-V-k}
\frac{d}{dt}\mathcal{V}_{\ell,\t}(\widehat{\Sigma}_t)=&\frac{n+1-\ell}{n+1}\int_{\Sigma_t}\((1+\cos\t\langle \nu,e\rangle)\frac{H_{k-1}}{H_{k}}H_\ell-\langle x,\nu\rangle H_{\ell}\)dA_t \nonumber\\
  \geq &\frac{n+1-\ell}{n+1}\int_{\Sigma_t}\((1+\cos\t\langle e,\nu)\rangle H_{\ell-1}-\langle x,\nu\rangle H_{\ell}\)dA_t=0,
\end{align}
where we used the Newton-MacLaurin inequality \eqref{s2:NM-ineq}. That is, along the flow \eqref{s1:BGL-flow}, $\mathcal{V}_{k,\t}$ is preserved and $\mathcal{V}_{k,\t}$ is monotone increasing for $\ell<k$. Then the flow \eqref{s1:BGL-flow} could be used as a tool to establish the Alexandrov-Fenchel inequalities for convex capillary hypersurfaces in the half-space. The special case $k=n$ of the flow \eqref{s1:BGL-flow} has been considered in \cite{Wang-Weng-Xia2022}, and as a consequence, the Alexandrov-Fenchel inequality between $\mathcal{V}_{n,\t}$ and $\mathcal{V}_{\ell,\t}$ for $0\leq \ell<n$ is obtained.

The aim of this paper is to provide a solution to the Conjecture \ref{s1.Ques} for convex capillary hypersurfaces in the half-space with the contact angle $\theta\in(0,\pi/2]$. We shall prove the smooth convergence of the flow \eqref{s1:BGL-flow} for all $1\leq k \leq n$. As the key step, we prove the preservation of the convexity along the flow \eqref{s1:BGL-flow}. For this purpose, we develop a generalized tensor maximum principle on compact manifolds with proper Neumann boundary condition.
\begin{thm}\label{s1:thm-max principle}
	Let $\Sigma$ be a smooth compact manifold with boundary $\partial \Sigma$ and $\mu$ be the outward pointing unit normal vector field of $\partial\Sigma$ in $\Sigma$. Assume that $S_{ij}$ is a smooth time-varying symmetric tensor field on $\Sigma$ satisfying
	\begin{equation*}
	\frac{\partial}{\partial t}S_{ij}=a^{k\ell}\nabla_k\nabla_\ell S_{ij}+b^k\nabla_kS_{ij}+N_{ij}
	\end{equation*}
	on $\Sigma\times [0,T]$, where the coefficients $a^{k\ell}$ and $b^k$ are smooth, $\nabla$ is a smooth symmetric connection, and $(a^{k\ell})$ is positive definite everywhere. Suppose that
	\begin{equation}\label{conditon-MP}
	N_{ij}\xi^i\xi^j+\sup_{\Gamma}2a^{k\ell}\left(2\Gamma^p_k\nabla_\ell S_{ip}\xi^i-\Gamma_k^p\Gamma_\ell^qS_{pq}\right)\geq 0\quad \mathrm{on}~\Sigma\times (0,T],
	\end{equation}
	\begin{equation}\label{eq-bou}
	\left(\nabla_\mu S_{ij}\right)\xi^i\xi^j\geq 0\qquad \mathrm{on}~ \partial\Sigma\times (0,T]
	\end{equation}
	whenever $S_{ij}\geq 0$ and $S_{ij}\xi^i=0$. If $S_{ij}\geq 0$ everywhere on $\Sigma\times\{0\}$, then it remains so on $\Sigma\times[0,T]$.
\end{thm}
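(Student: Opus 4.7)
The plan is to adapt Hamilton's classical tensor maximum principle to the Neumann-boundary setting, via a contradiction-at-first-failure argument combined with a standard perturbation, using the boundary hypothesis \eqref{eq-bou} to dispose of boundary null-directions before reducing to the interior computation. I would first perturb by replacing $S_{ij}$ with $S^\varepsilon_{ij}:=S_{ij}+\varepsilon(1+Kt)g_{ij}$ for small $\varepsilon>0$ and a constant $K$ to be fixed later; this evolves by the same equation with $N_{ij}$ replaced by $N_{ij}+\varepsilon K g_{ij}$. Since $S\geq 0$ at $t=0$, the perturbed tensor $S^\varepsilon$ is strictly positive initially; suppose for contradiction $S^\varepsilon$ first fails to be positive definite at some $t_0\in(0,T]$ at a point $x_0\in\Sigma$, with null eigenvector $\xi\neq 0$, so that $S^\varepsilon_{ij}(x_0,t_0)\xi^i=0$.

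Suppose first $x_0\in\operatorname{int}\Sigma$. Extend $\xi$ to a time-independent local vector field with $\nabla_k\xi^p(x_0)=-\Gamma^p_k$ for an arbitrary symmetric tensor $\Gamma$; the freedom in $\Gamma$ is exactly what the $\sup_\Gamma$ in \eqref{conditon-MP} anticipates. Let $f(x,t):=S^\varepsilon_{ij}(x,t)\xi^i(x)\xi^j(x)$, so $f\geq 0$ near $x_0$ with $f(x_0,t_0)=0$, giving $\partial_t f\leq 0$, $\nabla f=0$, $a^{k\ell}\nabla_k\nabla_\ell f\geq 0$ at $(x_0,t_0)$. Using the PDE for $S^\varepsilon$ together with $S^\varepsilon_{ij}\xi^i=0$, the inequality $\partial_t f-a^{k\ell}\nabla_k\nabla_\ell f-b^k\nabla_k f\leq 0$ reduces to
\[
0\geq N_{ij}\xi^i\xi^j+\varepsilon K|\xi|^2+2a^{k\ell}\bigl(2\Gamma^p_k(\nabla_\ell S_{ip})\xi^i-\Gamma^p_k\Gamma^q_\ell S^\varepsilon_{pq}\bigr).
\]
Taking $\sup_\Gamma$ on the right and invoking \eqref{conditon-MP} (in a form slightly perturbed because $\xi$ is a null direction of $S^\varepsilon$ rather than of $S$), the right-hand side is bounded below by $\varepsilon K|\xi|^2-C\varepsilon$ with $C$ independent of $K$, and choosing $K$ sufficiently large yields a strict contradiction.

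If instead $x_0\in\partial\Sigma$, extend $\xi$ additionally so that $\nabla_\mu\xi(x_0)=0$. Then at $(x_0,t_0)$ the outward normal derivative of $f$ is $\nabla_\mu f=(\nabla_\mu S_{ij})\xi^i\xi^j\geq 0$ by \eqref{eq-bou}, while the minimum property gives $\nabla_\mu f\leq 0$; combined with vanishing of the tangential derivative at an interior minimum of $f|_{\partial\Sigma}$, this yields $\nabla f(x_0,t_0)=0$. A Hopf-type second-derivative argument then upgrades $\nabla_\mu f=0$ into the same positivity information used in the interior case, reducing to the computation above and producing the same contradiction.

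The main obstacle is the continuity issue in the interior step: \eqref{conditon-MP} is stated at exact null directions of $S$, whereas after perturbation $\xi$ is a null direction of $S^\varepsilon=S+\varepsilon(1+Kt)g$, not of $S$. Quantifying the resulting $O(\varepsilon)$ defect in the $\sup_\Gamma$ functional and showing it is absorbed by the gained $\varepsilon K|\xi|^2$ term requires a uniform continuity bound over unit vectors $\xi$ and pins down the correct choice of $K$. The boundary case adds a secondary technical point in choosing the extension of $\xi$ so that the first-order normal test cleanly reduces to the interior computation without introducing uncontrolled lower-order terms.
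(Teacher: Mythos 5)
Your overall strategy coincides with the paper's: both study the quadratic form $Z(p,\xi)=S_{ij}(p)\xi^i\xi^j$ on the tangent bundle, establish at a boundary minimum that the full gradient vanishes and the Hessian is nonnegative, then feed this into Andrews' $\sup_\Gamma$ refinement and a Hamilton-style scalar comparison. Your first-order boundary argument is exactly the paper's: minimality over $\Sigma$ gives $\nabla_\mu f\leq 0$ while \eqref{eq-bou} gives $\nabla_\mu f\geq 0$, hence $\nabla f(x_0,t_0)=0$, and combined with the vanishing of the $\xi$-derivative this shows $DZ=0$ on all of $T_{(p,\xi)}(T\Sigma)$.

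The genuine gap is that you assert the second-order boundary step via a ``Hopf-type second-derivative argument'' rather than proving it. This step is precisely the content of the paper's Lemma \ref{s3:tech-lemma}, and it is the main technical ingredient beyond Andrews \cite{And2007} and Stahl \cite{Stahl1996-2}: one must show that $D^2Z|_{(p,\xi)}\geq 0$ as a form on the full $2n$-dimensional tangent space $T_{(p,\xi)}(T\Sigma)$, even though $(p,\xi)$ lies over a boundary point of $\Sigma$. The subtlety is that for a general direction $(a,b)$ one cannot test directly by a curve in $T\Sigma$ emanating inward; the paper handles this by first subtracting off the $(0,\xi)$-component (which is a null direction of $D^2Z$ since $Z$ is degree-two homogeneous in $\xi$ and $DZ=0$), then noting that the resulting base-direction $a$ may be taken inward, so a curve in $UT(\Sigma)$ with that initial velocity stays in the bundle, and the second derivative of $Z$ along it equals $D^2Z((a,b),(a,b))$. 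Without spelling this out, the appeal to ``the interior computation'' is not justified. Two secondary points: the extension requirement $\nabla_\mu\xi(x_0)=0$ is redundant, since $S^\varepsilon_{ij}\xi^j=0$ already kills the extra term in $\nabla_\mu f$; and you do not need $\Gamma$ symmetric --- the $\sup$ in \eqref{conditon-MP} is over arbitrary $\Gamma^p_k$, as any extension of $\xi$ can realize any $\nabla_k\xi^p(x_0)$.

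On the $\varepsilon$-perturbation issue you flag: you are right that, read literally, hypotheses \eqref{conditon-MP}--\eqref{eq-bou} are stated only at points where $S\geq 0$ with $S\xi=0$, whereas $\xi$ is a null direction of $S^\varepsilon$, not $S$. The paper sidesteps this by proving Lemma \ref{s3:tech-lemma} directly at the unperturbed minimum and then invoking Hamilton's scalar maximum principle argument (which itself absorbs the needed perturbation internally). Your plan pushes the perturbation to the front, which means you must quantify the defect; this is where the argument becomes delicate and is not resolved by what you have written. Either route can be made rigorous, but as written yours stops short at exactly the two places that require the paper's Lemma \ref{s3:tech-lemma}.
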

The tensor maximum principle is a generalization of the scalar maximum principle so that it applies to symmetric $2$-tensors. This was introduced firstly by Hamilton \cite[Theorem 9.1]{Hamilton1982} on compact manifolds to prove that the positivity of Ricci curvature is preserved along $3$-dimensional Ricci flow. Since then, it has been a powerful tool in the geometric flows and has been applied to find curvature preserving conditions along many kinds of flows. Hamilton's tensor maximum principle was refined by Andrews \cite[Theorem 3.1]{And2007} by weakening the null eigenvector condition $N_{ij}\xi^i\xi^j\geq 0$ to the condition \eqref{conditon-MP}. The tensor maximum principle on compact manifolds with proper Neumann boundary condition was also obtained by Stahl \cite[Theorem 3.3]{Stahl1996-2} under Hamilton's null eigenvector condition, in order to study the mean curvature flow with free boundary. Our Theorem \ref{s1:thm-max principle} can be viewed as a generalization of the tensor maximum principle of Hamilton \cite[Theorem 9.1]{Hamilton1982}, Andrews \cite[Theorem 3.1]{And2007} and Stahl \cite[Theorem 3.3]{Stahl1996-2}.

\begin{rem}
	Besides the null eigenvector condition $N_{ij}\xi^i\xi^j\geq 0$  and the boundary condition \eqref{eq-bou}, Stahl's tensor maximum principle in \cite[Theorem 3.3]{Stahl1996-2} needs an extra condition $\langle b,\mu\rangle\geq0$  on the boundary $\partial\Sigma$. Theorem \ref{s1:thm-max principle} refines Stahl's result by removing this condition and weakening the null eigenvector condtion to \eqref{conditon-MP}.
\end{rem}

The tensor maximum principle in Theorem \ref{s1:thm-max principle} is sufficient for us to prove that the strict convexity is preserved along the flow \eqref{s1:BGL-flow}. With the help of the strict convexity, we establish the convergence result of the flow \eqref{s1:BGL-flow} for strictly convex hypersurfaces with $\theta$-capillary boundary in the half-space.
\begin{thm}\label{s1:thm-convergence}
For $n\geq 2$, let $\Sigma\subset \-{\mathbb R}_{+}^{n+1}$ be a strictly convex hypersurface with capillary boundary supported on $\partial\-{\mathbb R}_{+}^{n+1}$ at a contact angle $\theta\in (0,\frac{\pi}{2}]$, which is given by the embedding $x_0:M \ra \-{\mathbb R}_{+}^{n+1}$ of a compact manifold $M$ with non-empty boundary in $\-{\mathbb R}_{+}^{n+1}$. Then for each $k=1,\cdots,n$, there exists a unique smooth solution $x:M\times [0,\infty)\ra \-{\mathbb R}_{+}^{n+1}$ to the flow \eqref{s1:BGL-flow}, which remains to be strictly convex and exists for all time $t\in [0,\infty)$. Moreover, $x(\cdot,t)$ converges to $x_\infty(\cdot)$ in the $C^\infty$-topology as $t\ra \infty$, and the limit $x_\infty:M\ra \-{\mathbb R}_+^{n+1}$ is a spherical cap.
\end{thm}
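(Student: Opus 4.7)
The strategy is standard for locally constrained curvature flows: prove short-time existence, establish uniform \emph{a priori} $C^\infty$-estimates that persist for all time, and then use the monotonicity of the capillary quermassintegrals to identify the limit as a spherical cap. Short-time existence of \eqref{s1:BGL-flow} follows from the general theory of fully nonlinear scalar parabolic equations with oblique boundary condition, applied to the support function written as a graph over a reference spherical cap. For the uniform $C^0$ estimates, the conservation of $\mathcal{V}_{k,\theta}$ recorded in \eqref{s1:evol-V-ell}, together with the monotonicity \eqref{s1:evol-V-k}, gives uniform bounds on $\mathcal{V}_{\ell,\theta}(\widehat{\Sigma}_t)$ for all $0\leq \ell\leq k$; combined with the convexity of $\Sigma_t$, these furnish uniform two-sided bounds on the inradius and circumradius, and hence on the support function $u=\langle x,\nu\rangle$ and on $\langle \nu,e\rangle$.

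The heart of the proof, and the main obstacle, is the preservation of strict convexity along \eqref{s1:BGL-flow}. I would apply Theorem \ref{s1:thm-max principle} to $S_{ij}=h_{ij}$. Computing the evolution equation of $h_{ij}$ under the flow and using the concavity of the operator $F=H_{k-1}/H_k$ in the Weingarten matrix, the Andrews-type gradient-correction term in \eqref{conditon-MP} absorbs the quadratic obstructions produced by differentiating the quotient twice, in parallel with the closed-hypersurface argument of Brendle-Guan-Li. The boundary inequality \eqref{eq-bou} is where the assumption $\theta\in(0,\pi/2]$ enters: differentiating the capillary condition $\langle \-N\circ x,\nu\rangle=-\cos\theta$ in the direction $\mu$ and combining with the Codazzi equation expresses $\nabla_\mu h_{\alpha\beta}$, for tangential indices $\alpha,\beta$, in terms of the second fundamental form of $\partial\Sigma\subset\partial\-{\mathbb R}^{n+1}_+$ plus a multiple of $h_{\alpha\beta}$ itself with coefficient proportional to $\cos\theta$. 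A null eigenvector $\xi$ of $h_{ij}$ at a boundary point decomposes into a component tangent to $\partial\Sigma$ and a component parallel to $\mu$, and the sign $\cos\theta\geq 0$ together with convexity of $\partial\Sigma$ delivers the required non-negativity. It is essential here that Theorem \ref{s1:thm-max principle} removes the condition $\langle b,\mu\rangle\geq 0$ from Stahl's version, because the transport coefficient coming from the term $-\langle x,\nu\rangle$ in the flow speed has no definite sign on $\partial\Sigma$.

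Once strict convexity is preserved, $H_k$ stays bounded below away from zero and all principal curvatures are uniformly bounded above, giving uniform $C^2$ estimates. Krylov-Safonov-type estimates for fully nonlinear parabolic equations with oblique boundary data (in the form of Lieberman) upgrade these to uniform $C^{2,\alpha}$ bounds, and a standard Schauder bootstrap yields uniform $C^\infty$ estimates, so $T=\infty$. For the convergence, the uniform upper bound on $\mathcal{V}_{\ell,\theta}$ together with \eqref{s1:evol-V-k} implies
\begin{equation*}
\int_0^\infty\int_{\Sigma_t}\left((1+\cos\theta\langle \nu,e\rangle)\frac{H_{k-1}H_\ell}{H_k}-(1+\cos\theta\langle\nu,e\rangle)H_{\ell-1}\right)dA_t\,dt<\infty,
\end{equation*}
so along a subsequence $t_j\to\infty$ the integrand converges to zero. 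By the $C^\infty$-estimates and Arzel\`a-Ascoli, a subsequence $\Sigma_{t_j}$ converges smoothly to a limit $\Sigma_\infty$ on which equality holds pointwise in the Newton-MacLaurin inequality used in \eqref{s1:evol-V-k}; this forces all principal curvatures to coincide, so $\Sigma_\infty$ is totally umbilic, and the capillary boundary condition identifies it as a spherical cap $\mathbb S^n_\theta$. Uniqueness of such a cap with the prescribed conserved value $\mathcal{V}_{k,\theta}(\widehat{\Sigma}_\infty)=\mathcal{V}_{k,\theta}(\widehat{\Sigma}_0)$ then upgrades subsequential convergence to full smooth convergence.
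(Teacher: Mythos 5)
The central gap is your choice of tensor for the maximum principle. You propose applying Theorem~\ref{s1:thm-max principle} to $S_{ij}=h_{ij}$, which would at best yield preservation of \emph{weak} convexity $h_{ij}\geq 0$. The paper instead applies it to the pinching tensor $S_{ij}=h_{ij}-\varepsilon F g_{ij}$. This change is not cosmetic. First, only the pinched inequality, combined with a separate positive lower bound on $F$ (obtained via a maximum principle for $\bar F=\bar u F$ with $\bar u = u/(1+\cos\theta\langle\nu,e\rangle)$, exploiting preserved star-shapedness and the boundary conditions $\nabla_\mu \bar u=0$, $\nabla_\mu F=0$), yields the uniform two-sided bounds $0<C_1\leq\kappa_i\leq C_2$ needed for uniform parabolicity and the $C^2$ estimates; weak convexity gives no such quantitative control. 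Second, in the verification of the null-eigenvector condition \eqref{conditon-MP}, the paper uses the first-derivative identity $\nabla_k h_{11}=\varepsilon\nabla_k F$ at a minimum to convert $|\nabla F|^2$ terms into $|\nabla h_{11}|^2$ terms divided by $\kappa_1=\varepsilon F>0$; the key estimates \eqref{s5.Q1-1}--\eqref{s5.Q2-1} all involve $1/\kappa_1$ and the positive ratio $\dot f^k/\kappa_1$, and the Young's inequality absorption at the end relies on $\kappa_1>0$. With $S_{ij}=h_{ij}$ the null eigenvalue is $\kappa_1=0$ and these manipulations degenerate; you cannot simply invoke "the Andrews-type gradient correction absorbs the quadratic obstructions." Your claim that strict convexity is preserved does not follow from the argument you outline.

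Your $C^0$ estimate is also on shaky ground. Monotonicity of $\mathcal V_{\ell,\theta}$ for $\ell<k$ gives only a \emph{lower} bound (it is non-decreasing), and an upper bound on $\mathcal V_{\ell,\theta}$ in terms of the conserved $\mathcal V_{k,\theta}$ is essentially the Alexandrov--Fenchel inequality being proved, so the argument as stated is circular. The paper's route is simpler and non-circular: spherical caps $C_{r,\theta}$ are stationary solutions, so the avoidance principle traps $\Sigma_t$ between two fixed caps $C_{r_1,\theta}$ and $C_{r_2,\theta}$, giving the $C^0$ bound directly; star-shapedness and the support-function lower bound then give the gradient estimate. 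The paper also separately needs a uniform upper bound on $H$ (Proposition~\ref{s4:upper-bound-H}), which you omit; this uses the inequality $\dot F^{k\ell}(h^2)_{k\ell}\leq(n-\ell+1)F^2$ from \eqref{s2:key-inequality}, and it does not follow from strict convexity alone. Your description of the boundary verification and of the convergence argument is otherwise consistent with the paper's (the paper handles the two cases $\xi=\mu$ and $\xi=e_\alpha$ using Proposition~\ref{s2:prop-second-fundamental-form} and $\nabla_\mu F=0$, and cites \cite{Wang-Weng-Xia2022} for the final convergence to a uniquely determined cap), but the missing pinching tensor and the circular $C^0$ estimate are substantive gaps.
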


As an application of Theorem \ref{s1:thm-convergence}, we establish a complete family of Alexandrov-Fenchel type inequalities for convex capillary boundary hypersurfaces in the half-space.
\begin{thm}\label{s1:thm-AF-ineq}
	For $n\geq 2$, let $\Sigma\subset \-{\mathbb R}^{n+1}_{+}$ be a convex hypersurface with capillary boundary supported on $\partial\-{\mathbb R}_{+}^{n+1}$ at a contact angle $\theta\in (0,\frac{\pi}{2}]$. Then there holds
	\begin{align}\label{s1:AF-inequality-capillary}
	\frac{\mathcal{V}_{k,\t}(\widehat{\Sigma})}{|\mathbb B_\t^{n+1}|} \geq \(\frac{\mathcal{V}_{\ell,\t}(\widehat{\Sigma})}{|\mathbb B_\t^{n+1}|}\)^\frac{n+1-k}{n+1-\ell}, \quad 0\leq \ell<k\leq n.
	\end{align}
	Equality holds in \eqref{s1:AF-inequality-capillary} if and only if $\Sigma$ is a spherical cap  in the half-space with $\t$-capillary boundary.
\end{thm}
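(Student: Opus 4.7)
The flow \eqref{s1:BGL-flow} of parameter $k$ was designed precisely so that $\mathcal{V}_{k,\theta}$ is preserved while $\mathcal{V}_{\ell,\theta}$ is monotone non-decreasing for $1\leq\ell<k$, as already displayed in \eqref{s1:evol-V-ell}--\eqref{s1:evol-V-k}. Combined with Theorem \ref{s1:thm-convergence}, which provides smooth convergence to a spherical cap, this monotonicity reduces \eqref{s1:AF-inequality-capillary} to a direct computation on caps, where the inequality becomes an equality by scaling. My plan has three steps: (i) a flow argument for smooth strictly convex $\Sigma$ and $1\leq\ell<k\leq n$; (ii) a chaining step with the relative isoperimetric inequality \eqref{s1.iso} to cover $\ell=0$; and (iii) an approximation step to drop strict convexity and smoothness.

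\textbf{Flow step and equality case.} Assume $\Sigma$ is smooth and strictly convex with $\theta$-capillary boundary, and fix $1\leq\ell<k\leq n$. I would run the flow \eqref{s1:BGL-flow} with parameter $k$ starting from $\Sigma_0=\Sigma$. By Theorem \ref{s1:thm-convergence}, the solution $\{\Sigma_t\}_{t\geq 0}$ exists for all time, stays strictly convex, and converges smoothly to a spherical cap $\Sigma_\infty=\mathbb{S}^n_{\theta,r_\infty}$. From \eqref{s1:evol-V-ell} the quantity $\mathcal{V}_{k,\theta}(\widehat{\Sigma}_t)$ is constant in $t$, and from \eqref{s1:evol-V-k} combined with the pointwise Newton-MacLaurin inequality $H_{k-1}H_\ell\geq H_kH_{\ell-1}$, the quantity $\mathcal{V}_{\ell,\theta}(\widehat{\Sigma}_t)$ is non-decreasing. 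A direct computation on caps gives $\mathcal{V}_{j,\theta}(\widehat{\Sigma}_\infty)=r_\infty^{n+1-j}|\mathbb{B}^{n+1}_\theta|$, so passing to $t\to\infty$ yields
\[
\frac{\mathcal{V}_{k,\theta}(\widehat{\Sigma})}{|\mathbb{B}^{n+1}_\theta|}=\frac{\mathcal{V}_{k,\theta}(\widehat{\Sigma}_\infty)}{|\mathbb{B}^{n+1}_\theta|}=r_\infty^{n+1-k}=\Bigl(\frac{\mathcal{V}_{\ell,\theta}(\widehat{\Sigma}_\infty)}{|\mathbb{B}^{n+1}_\theta|}\Bigr)^{\frac{n+1-k}{n+1-\ell}}\geq\Bigl(\frac{\mathcal{V}_{\ell,\theta}(\widehat{\Sigma})}{|\mathbb{B}^{n+1}_\theta|}\Bigr)^{\frac{n+1-k}{n+1-\ell}}.
\]
For the equality case, equality at $t=0$ propagates along the flow by the monotonicity above, forcing $\frac{d}{dt}\mathcal{V}_{\ell,\theta}(\widehat{\Sigma}_t)\equiv 0$ and hence pointwise equality in $H_{k-1}H_\ell\geq H_kH_{\ell-1}$ on every $\Sigma_t$; since $1\leq\ell<k$ this is the equality case of Newton-MacLaurin, so $\Sigma_t$ is totally umbilical, and together with the $\theta$-capillary boundary condition it must be a spherical cap; in particular so is $\Sigma=\Sigma_0$.

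\textbf{Remaining cases and approximation.} The case $(k,\ell)=(1,0)$ of \eqref{s1:AF-inequality-capillary} is exactly the relative isoperimetric inequality \eqref{s1.iso}. For $2\leq k\leq n$, $\ell=0$, I chain this with the previously established $\ell=1$ case:
\[
\frac{\mathcal{V}_{k,\theta}(\widehat{\Sigma})}{|\mathbb{B}^{n+1}_\theta|}\geq\Bigl(\frac{\mathcal{V}_{1,\theta}(\widehat{\Sigma})}{|\mathbb{B}^{n+1}_\theta|}\Bigr)^{\frac{n+1-k}{n}}\geq\Bigl(\frac{\mathcal{V}_{0,\theta}(\widehat{\Sigma})}{|\mathbb{B}^{n+1}_\theta|}\Bigr)^{\frac{n+1-k}{n+1}},
\]
with equality in either estimate again forcing $\Sigma$ to be a spherical cap. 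To drop the strict convexity and smoothness assumptions, I would approximate a general convex $\theta$-capillary $\Sigma$ by smooth strictly convex $\theta$-capillary hypersurfaces $\Sigma^{(j)}\to\Sigma$, apply the result to each $\Sigma^{(j)}$, and pass to the limit by continuity of the functionals $\mathcal{V}_{j,\theta}$ in the Hausdorff topology on convex bodies. Beyond Theorem \ref{s1:thm-convergence} itself — whose crux is the convexity-preservation step made possible by the new tensor maximum principle Theorem \ref{s1:thm-max principle} — the principal obstacle is arranging this capillary-respecting approximation; a natural device is to evolve $\Sigma$ for a short time under a classical uniformly parabolic curvature flow that immediately regularizes convexity to strict convexity while preserving the $\theta$-capillary boundary condition.
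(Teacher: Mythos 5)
Your proposal follows the same route as the paper: run the flow \eqref{s1:BGL-flow} from a strictly convex initial hypersurface, use the preservation of $\mathcal{V}_{k,\theta}$ and monotonicity of $\mathcal{V}_{\ell,\theta}$ together with the cap identity \eqref{s2:AF-equality} and the convergence Theorem \ref{s1:thm-convergence}, handle $\ell=0$ by chaining with \eqref{s1.iso}, and remove strict convexity by approximation (the paper carries out the approximation via a short-time mean curvature flow in the appendix, exactly the device you suggest). This is essentially identical to the paper's argument, correct in all material respects.
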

\begin{rem}
The case $k=n$ and $0\leq \ell<n$ of \eqref{s1:AF-inequality-capillary} was proved in Wang, Weng and Xia \cite[Theorem 1.2]{Wang-Weng-Xia2022}. Our theorem confirms the Conjecture \ref{s1.Ques} for convex capillary hypersurfaces in the half-space with contact angle $\t\in (0,\frac{\pi}{2}]$.
\end{rem}
\begin{rem}
For convex capillary hypersurfaces in the unit Euclidean ball $\mathbb{B}^{n+1}$, Scheuer-Wang-Xia \cite{Scheuer-Wang-Xia2018} and Weng-Xia \cite{WengX21} introduced the suitable quermassintegrals and established the Alexandrov-Fenchel type inequalities for the highest order $k=n$ and $\ell<n$.
\end{rem}

When $k=2$ and $\ell=1$, \eqref{s1:AF-inequality-capillary} implies the following Minkowski type inequality for convex capillary hypersurfaces with boundary in $\-{\mathbb R}^{n+1}_{+}$.
\begin{cor}\label{s1:cor-Minkowski-ineq}
	Let $\Sigma\subset \-{\mathbb R}^{n+1}_{+}, n\geq 2, $ be a convex hypersurface with capillary boundary supported on $\partial\-{\mathbb R}_{+}^{n+1}$ at a contact angle $\theta\in (0,\frac{\pi}{2}]$. Then there holds
	\begin{align}\label{s1.Minkowski-type-ineq}
	\int_{\Sigma} H dA \geq n(n+1)^\frac{1}{n}|\mathbb B_{\t}^{n+1}|^\frac{1}{n}\(|\Sigma|-\cos\t|\widehat{\partial \Sigma}|\)^\frac{n-1}{n}+\cos\t \sin\t|\partial \Sigma|.
	\end{align}
	Equality holds if and only if $\Sigma$ is a spherical cap  in the half-space with $\t$-capillary boundary.
\end{cor}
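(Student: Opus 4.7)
The plan is to derive Corollary \ref{s1:cor-Minkowski-ineq} as a direct algebraic consequence of Theorem \ref{s1:thm-AF-ineq} in the special case $k=2$, $\ell=1$, by unpacking the definitions \eqref{s1:quermassintegral-capillary} of the capillary quermassintegrals. First I would take the inequality \eqref{s1:AF-inequality-capillary} with $k=2$ and $\ell=1$ and rewrite it in the equivalent form
\begin{equation*}
\mathcal{V}_{2,\t}(\widehat{\Sigma}) \geq |\mathbb{B}_\t^{n+1}|^{1/n}\, \mathcal{V}_{1,\t}(\widehat{\Sigma})^{(n-1)/n}.
\end{equation*}

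Next I would substitute the explicit expressions coming from \eqref{s1:quermassintegral-capillary}. The linear term is immediate, $\mathcal{V}_{1,\t}(\widehat{\Sigma}) = \tfrac{1}{n+1}(|\Sigma|-\cos\t|\widehat{\partial\Sigma}|)$. For $\mathcal{V}_{2,\t}$, I would use that $H_1 = H/n$ (so that $\tfrac{1}{n+1}\int_\Sigma H_1\,dA = \tfrac{1}{n(n+1)}\int_\Sigma H\,dA$) together with $H_0^{\partial\Sigma}\equiv 1$ (so that $\int_{\partial\Sigma} H_0^{\partial\Sigma}\,ds = |\partial\Sigma|$), yielding
\begin{equation*}
\mathcal{V}_{2,\t}(\widehat{\Sigma}) = \frac{1}{n(n+1)}\int_\Sigma H\,dA - \frac{\cos\t\sin\t}{n}|\partial\Sigma|.
\end{equation*}
Plugging these into the inequality above and multiplying through by the positive factor $n(n+1)$ converts the constant $n(n+1)\cdot|\mathbb{B}_\t^{n+1}|^{1/n}/(n+1)^{(n-1)/n}$ on the right into $n(n+1)^{1/n}|\mathbb{B}_\t^{n+1}|^{1/n}$, and after transferring the boundary term from the left to the right one obtains the Minkowski-type inequality \eqref{s1.Minkowski-type-ineq} (up to the explicit coefficient in front of the $|\partial\Sigma|$-term, which one simply reads off from this rearrangement).

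The rigidity statement comes for free: equality in \eqref{s1.Minkowski-type-ineq} forces equality in the underlying Alexandrov-Fenchel inequality between $\mathcal{V}_{2,\t}$ and $\mathcal{V}_{1,\t}$, and by the rigidity part of Theorem \ref{s1:thm-AF-ineq} this characterizes exactly the spherical caps with $\t$-capillary boundary. There is no real obstacle here: the entire content of the corollary is packaged inside Theorem \ref{s1:thm-AF-ineq}, and the only task is the combinatorial bookkeeping of the normalization constants $\tfrac{1}{n}$, $\tfrac{1}{n+1}$, and the exponent $(n-1)/n$.
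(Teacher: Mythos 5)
Your strategy is exactly the paper's own (implicit) argument: the corollary is Theorem~\ref{s1:thm-AF-ineq} with $k=2$, $\ell=1$, unpacked via \eqref{s1:quermassintegral-capillary}. The algebra for the leading term is fine ($H_1=H/n$, and the normalization constant works out to $n(n+1)^{1/n}|\mathbb B_\theta^{n+1}|^{1/n}$). However, there is a genuine gap at precisely the step you wave off. You write $\mathcal{V}_{2,\theta}(\widehat{\Sigma})=\frac{1}{n(n+1)}\int_\Sigma H\,dA-\frac{\cos\theta\sin\theta}{n}|\partial\Sigma|$ and then say the coefficient of $|\partial\Sigma|$ ``one simply reads off from this rearrangement.'' If you actually carry that out, multiplying by $n(n+1)$ turns $\frac{\cos\theta\sin\theta}{n}|\partial\Sigma|$ into $(n+1)\cos\theta\sin\theta|\partial\Sigma|$, which is \emph{not} the coefficient $\cos\theta\sin\theta$ appearing in \eqref{s1.Minkowski-type-ineq}. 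So the derivation as stated does not produce the claimed inequality, and deferring the check hides a real inconsistency.

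The source of the discrepancy is the parenthesization in \eqref{s1:quermassintegral-capillary}. As typeset, the factor $\frac{1}{n+1}$ looks as if it multiplies only the $\Sigma$-integral, which is what you used. But the rest of the paper only makes sense if $\frac{1}{n+1}$ multiplies the \emph{whole} bracket, i.e.
\begin{equation*}
\mathcal{V}_{k+1,\theta}(\widehat{\Sigma})=\frac{1}{n+1}\left(\int_\Sigma H_k\,dA-\frac{\cos\theta\sin^k\theta}{n}\int_{\partial\Sigma}H_{k-1}^{\partial\Sigma}\,ds\right),
\end{equation*}
so that for $k=1$ one has $\mathcal{V}_{2,\theta}(\widehat{\Sigma})=\frac{1}{n(n+1)}\int_\Sigma H\,dA-\frac{\cos\theta\sin\theta}{n(n+1)}|\partial\Sigma|$. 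Two pieces of internal evidence: the displayed Gauss--Bonnet identity for $\mathcal{V}_{n+1,\theta}$ in the paper carries the constant $\frac{\cos\theta\sin^n\theta\,\omega_{n-1}}{n(n+1)}$, i.e.\ with the extra $(n+1)$ in the denominator; and \eqref{s2:AF-equality}, $\mathcal{V}_{k,\theta}(\widehat{C_{r,\theta}})=|\mathbb B_\theta^{n+1}|r^{n+1-k}$, fails for $k=2$ under the literal reading but holds under the distributed one. With the corrected expansion, multiplying the $k=2$, $\ell=1$ Alexandrov--Fenchel inequality by $n(n+1)$ gives exactly \eqref{s1.Minkowski-type-ineq}, and you can confirm the coefficient independently by checking equality for $C_{r,\theta}$: both sides equal $nr^{n-1}|\mathbb S_\theta^n|$. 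Your rigidity argument is correct as stated. The lesson: do not defer the coefficient check; in this case it is the whole content of the corollary beyond Theorem~\ref{s1:thm-AF-ineq}, and doing it catches an ambiguity in the paper's typeset definition.
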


\begin{rem}
Recently, Wang-Weng-Xia \cite{WWX23} proved \eqref{s1.Minkowski-type-ineq} for star-shaped and mean convex capillary hypersurfaces for the whole range of contact
angle $\theta\in (0,\pi)$.
\end{rem}

The paper is organized as follows: In \S \ref{sec:2}, we collect some preliminaries on convex capillary hypersurfaces in the half-space and the elementary symmetric polynomials.  In \S \ref{sec:3}, we derive the evolution equations along the flow \eqref{s1:BGL-flow}. In \S \ref{sec:4}, we prove the generalized tensor maximum principle, i.e., Theorem \ref{s1:thm-max principle}. This will be applied in \S \ref{sec:5} to prove the preservation of strict convexity along the flow \eqref{s1:BGL-flow}. The long time existence and convergence of the flow \eqref{s1:BGL-flow} in Theorem \ref{s1:thm-convergence} will be proved in \S \ref{sec:6}, and as an application we obtain the Alexandrov-Fenchel type inequalities  in Theorem \ref{s1:thm-AF-ineq}.

\section{Preliminaries}\label{sec:2}

In this section, we collect some preliminaries on the capillary hypersurfaces in the half-space, the spherical caps and some basic properties on the elementary symmetric polynomials.
\subsection{Capillary hypersurfaces in the half-space}
Let $M$ be a compact orientable smooth manifold of dimension $n$ with boundary $\partial M$, and let $x:M\ra \-{\mathbb R}^{n+1}_{+}$ be a properly embedded smooth hypersurface. In particular,
$$
x(\operatorname{int}(M))\subset \mathbb R^{n+1}_{+} \quad \text{and}\quad  x(\partial M)\subset \partial \-{\mathbb R}_{+}^{n+1}.
$$
Denote $\Sigma=x(M)$ and $\partial \Sigma=x(\partial M)$. Let $\nu$ be the unit outward normal vector of $\Sigma$ in $\overline{\mathbb R}^{n+1}_{+}$, and $\-N$ the unit outward nomral of $\partial \-{\mathbb R}_{+}^{n+1}$ in $\-{\mathbb R}_{+}^{n+1}$. Let $\mu$ be the unit outward co-normal of $\partial \Sigma$ in $\Sigma$ and $\-\nu$ be the unit normal of $\partial \Sigma$ in $\partial \-{\mathbb R}^{n+1}_{+}$ such that $\{\nu,\mu\}$ and $\{\-\nu,\-N\}$ have the same orientation in the normal bundle of $\partial \Sigma \subset \-{\mathbb R}^{n+1}_{+}$. The contact angle $\t$ between the hypersurface $\Sigma$ and the support hyperplane $\partial \-{\mathbb R}^{n+1}_{+}$ is defined by $\langle \nu,\-N\rangle=\cos(\pi-\t)$. See Figure \ref{fig1} below. It follows that on the boundary $\partial \Sigma$, there holds
\begin{align}\label{s2:relation-normal}
\left\{\begin{aligned}\-N= &\sin\t \mu-\cos \t \nu, \\
\-\nu=&\cos \t \mu+\sin \t \nu.
\end{aligned}\right.
\end{align}
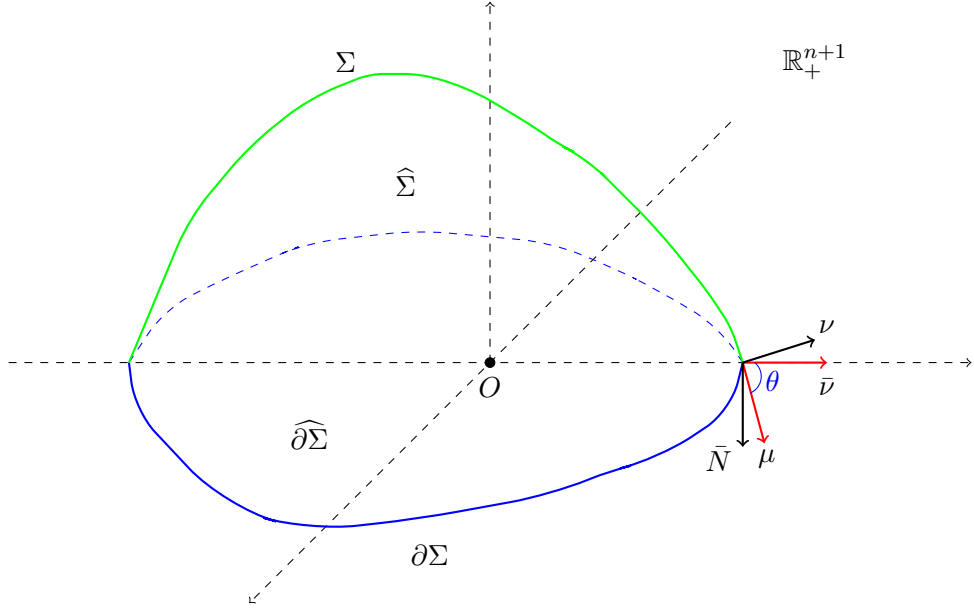
\begin{figure}[h]
	\begin{tikzpicture}[scale=1.6]
	\draw[dashed][->](-2,0)--(6,0);
	\draw[dashed][->](4,2)--(0,-2);
	\draw[dashed][->](2,0)--(2,3);
	\filldraw(2,0) circle(0.04);
	\node at (2,-0.2){$O$}[scale=0.5];
	\draw[thick][blue](-1,0)[rounded corners=11pt]--(-0.95,-0.4)
	[rounded corners=20pt]--(-0.2,-1.2)[rounded corners=18pt]--(0.5,-1.4)
	[rounded corners=15pt]--(1.4,-1.3)
	[rounded corners=15pt]--(2.5,-1.1)
	[rounded corners=8pt]--(3.0,-0.9)
	[rounded corners=15pt]--(3.4,-0.8)
	[rounded corners=11pt]--(4,-0.4)
	[rounded corners=11pt]--(4.1,0);
	\draw[dashed][blue](-1,0)[rounded corners=11pt]--(-0.7,0.45)
	[rounded corners=20pt]--(0,0.8)[rounded corners=11pt]--(0.5,1.0)
	[rounded corners=11pt]--(1.4,1.1)
	[rounded corners=11pt]--(2.5,1.0)
	[rounded corners=11pt]--(3.0,0.8)
	[rounded corners=11pt]--(3.4,0.6)
	[rounded corners=11pt]--(3.9,0.3)
	[rounded corners=11pt]--(4.1,0);
	\draw[thick][green](-1,0)[rounded corners=11pt]--(-0.5,1.2)
	[rounded corners=11pt]--(0,1.8)
	[rounded corners=11pt]--(0.5,2.2)
	[rounded corners=5pt]--(1,2.4)
	[rounded corners=5pt]--(1.4,2.4)
	[rounded corners=11pt]--(1.8,2.3)
	[rounded corners=5pt]--(2.3,2)
	[rounded corners=5pt]--(2.6,1.8)
	[rounded corners=10pt]--(2.8,1.7)
	[rounded corners=5pt]--(3.0,1.5)
	[rounded corners=11pt]--(3.4,1.1)
	[rounded corners=5pt]--(3.8,0.6)
	[rounded corners=5pt]--(4.0,0.3)
	[rounded corners=11pt]--(4.1,0);
	\draw[->][red][thick](4.1,0)--(4.8,0);
	\draw[->][red][thick](4.1,0)--(4.28,-0.67);
	\draw[->][thick](4.1,0)--(4.7,0.19);
	\draw[->][thick](4.1,0)--(4.1,-0.7);
	\node at (4.7,2.5){$\mathbb{R}_{+}^{n+1}$};
	\node at (1.5,-1.6){$\partial\Sigma$};
	\node at (0.5,-0.6){$\widehat{\partial\Sigma}$};
	\node at (1.3,1.5) {$\widehat{\Sigma}$};
	\node at (0.8,2.5){$\Sigma$};
	\node at (3.9,-0.8){$\bar{N}$};
	\node at (4.3,-0.8){$\mu$};
	\node at (4.8,-0.2){$\bar{\nu}$};
	\node at (4.8,0.3){$\nu$};
	\draw[blue](4.2,0) arc(50:-67:0.15);
	\node[blue] at (4.35,-0.15){$\theta$};
	\end{tikzpicture}
	\caption{Capillary hypersurface $\Sigma$ in the half-space with contact angle $\theta$}\label{fig1}
\end{figure}

We use $D$ to denote the Levi-Civita connection of $\-{\mathbb R}^{n+1}_{+}$ with respect to the Euclidean metric $\d$, and $\nabla$ the Levi-Civita connection on $\Sigma$ with respect to the induced metric $g$, respectively. The second fundamental form $h$ of $\Sigma$ in $\-{\mathbb R}^{n+1}_{+}$ is defined by
\begin{align*}
D_X Y=\nabla_X Y-h(X,Y)\nu.
\end{align*}
The principal curvatures $\k=(\k_1,\cdots,\k_n)$ of the hypersurface $\Sigma$ are defined as the eigenvalues of the Weingarten matrix $\mathcal{W}=(h_i^j)=(g^{kj}h_{ik})$.

Note that $\partial \Sigma\subset \partial \-{\mathbb R}^{n+1}_{+}$ can be viewed as a smooth closed hypersurface in $\mathbb R^n$, which encloses a bounded domain $\widehat{\partial \Sigma}$ in $\mathbb R^n$. Since $\langle \-\nu, \-N\circ x\rangle=0$, the second fundamental form of $\partial \Sigma$ in $\mathbb R^n$ is given by
\begin{align*}
\widehat{h}(X,Y):=-\langle \nabla^{\mathbb R^n}_{X}Y,\-\nu\rangle=-\langle D_XY,\bar{\nu}\rangle, \quad X,Y\in T(\partial \Sigma).
\end{align*}
Similarly, due to $\langle \nu,\mu\rangle=0$, the second fundamental form of $\partial\Sigma$ in $\Sigma$ is given by
\begin{align*}
\tilde{h}(X,Y):=-\langle \nabla_{X}Y,\mu\rangle=-\langle D_XY,\mu\rangle, \quad X,Y\in T(\partial \Sigma).
\end{align*}

The following property of capillary hypersurfaces in the half-space is given in \cite[Proposition 2.3]{Wang-Weng-Xia2022}.
\begin{prop}[\cite{Wang-Weng-Xia2022}]\label{s2:prop-second-fundamental-form}
	Let $\Sigma\subset \-{\mathbb R}^{n+1}_{+}$ be a capillary hypersurface in the half-space with contact angle $\t\in(0,\frac{\pi}{2}]$. Let $\{e_\a\}_{\a=1}^{n-1}$ be an orthonormal frame of $\partial \Sigma$. Then along $\partial \Sigma$, we have
	\begin{enumerate}[(1)]
		\item $\mu$ is a principal direction of $\Sigma$, that is, $h_{\mu\a}=h(\mu,e_\a)=0$.
	    \item $h_{\a\b}=\sin \t \widehat{h}_{\a\b}$.
	    \item $\tilde{h}_{\a\b}=\cos\t \widehat{h}_{\a\b}=\cot\t h_{\a\b}$.
	    \item $\nabla_\mu h_{\a\b}=\tilde{h}_{\b\g}(h_{\mu\mu}\d_{\a\g}-h_{\a\g})$.
	\end{enumerate}
\end{prop}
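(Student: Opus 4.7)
The plan is to exploit the two algebraic relations in \eqref{s2:relation-normal} together with the fact that $\bar N=-e_{n+1}$ is parallel in the ambient Euclidean space, reducing everything to differentiating the contact angle condition and the orthogonality conditions satisfied on $\partial\Sigma$. Throughout, $\{e_\alpha\}$ is an orthonormal frame for $T(\partial\Sigma)$, and I will use $D$ for the flat ambient connection and $\nabla$ for the induced connection on $\Sigma$.

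For (1), I would differentiate the constant $\langle \nu,\bar N\rangle=-\cos\theta$ along any $e_\alpha\in T(\partial\Sigma)$. Since $D\bar N=0$, this yields $\langle D_{e_\alpha}\nu,\bar N\rangle=0$; inserting $\bar N=\sin\theta\,\mu-\cos\theta\,\nu$ and using $\langle D_{e_\alpha}\nu,\nu\rangle=0$ gives $\langle D_{e_\alpha}\nu,\mu\rangle=0$, i.e.\ $h_{\mu\alpha}=0$. For (2) and (3), I would use the two definitions
\[
h(X,Y)=-\langle D_XY,\nu\rangle,\quad \tilde h(X,Y)=-\langle D_XY,\mu\rangle,\quad \widehat h(X,Y)=-\langle D_XY,\bar\nu\rangle
\]
for $X,Y\in T(\partial\Sigma)$, and then:
\begin{itemize}
\item differentiating $\langle Y,\bar N\rangle=0$ in direction $X$ gives $\langle D_XY,\bar N\rangle=0$, so $\sin\theta\,\tilde h(X,Y)=\cos\theta\,h(X,Y)$, yielding $\tilde h_{\alpha\beta}=\cot\theta\,h_{\alpha\beta}$;
\item substituting $\bar\nu=\cos\theta\,\mu+\sin\theta\,\nu$ yields $\widehat h=\cos\theta\,\tilde h+\sin\theta\,h=(\cos^2\theta/\sin\theta+\sin\theta)h=h/\sin\theta$,
\end{itemize}
which together give $h_{\alpha\beta}=\sin\theta\,\widehat h_{\alpha\beta}$ and $\tilde h_{\alpha\beta}=\cos\theta\,\widehat h_{\alpha\beta}$.

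The main work is (4), which I would obtain from Codazzi and the previous parts. Since the ambient space is flat, Codazzi gives $\nabla_\mu h_{\alpha\beta}=\nabla_\alpha h_{\mu\beta}$. Expanding the covariant derivative,
\[
\nabla_\alpha h_{\mu\beta}=e_\alpha\bigl(h(\mu,e_\beta)\bigr)-h(\nabla^\Sigma_{e_\alpha}\mu,e_\beta)-h(\mu,\nabla^\Sigma_{e_\alpha}e_\beta).
\]
Along $\partial\Sigma$ the function $h(\mu,e_\beta)$ vanishes by (1), so the first term drops. To handle the remaining two, I would establish the two boundary identities
\[
\nabla^\Sigma_{e_\alpha}\mu=\tilde h_{\alpha\gamma}e_\gamma,\qquad \nabla^\Sigma_{e_\alpha}e_\beta=\nabla^{\partial\Sigma}_{e_\alpha}e_\beta-\tilde h_{\alpha\beta}\,\mu,
\]
the first from $\langle \nabla^\Sigma_{e_\alpha}\mu,e_\gamma\rangle=-\langle\mu,\nabla^\Sigma_{e_\alpha}e_\gamma\rangle=\tilde h_{\alpha\gamma}$, and the second by decomposing $\nabla^\Sigma_{e_\alpha}e_\beta$ into its tangential-to-$\partial\Sigma$ and $\mu$ parts. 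Substituting and using $h_{\mu\gamma}=0$ again, I get
\[
\nabla_\alpha h_{\mu\beta}=-\tilde h_{\alpha\gamma}h_{\gamma\beta}+\tilde h_{\alpha\beta}h_{\mu\mu}=\tilde h_{\beta\gamma}\bigl(h_{\mu\mu}\delta_{\alpha\gamma}-h_{\alpha\gamma}\bigr),
\]
where the last equality uses the symmetry $\tilde h_{\alpha\beta}h_{\mu\mu}=\tilde h_{\beta\alpha}h_{\mu\mu}$ and the fact that $\tilde h$ and $h$ commute on $T(\partial\Sigma)$ (which is immediate from part (3), since $\tilde h=\cot\theta\,h$ as $(n-1)\times(n-1)$ matrices).

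The only mild obstacle is bookkeeping in (4): correctly isolating the normal and tangential parts of $\nabla^\Sigma_{e_\alpha}e_\beta$ relative to $\partial\Sigma$ inside $\Sigma$, and remembering to invoke $h_{\mu\alpha}=0$ twice to kill stray terms; once this is done, Codazzi and the matrix commutativity from (3) immediately yield the stated symmetric form.
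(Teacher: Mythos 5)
Your argument is correct. The paper itself does not prove Proposition~\ref{s2:prop-second-fundamental-form}; it simply cites \cite{Wang-Weng-Xia2022}*{Proposition 2.3}, so there is no in-paper proof to compare against. Your route — differentiating the constant-angle condition $\langle\nu,\bar N\rangle=\cos(\pi-\theta)$ along $\partial\Sigma$ for (1), projecting $\bar N$ and $\bar\nu$ onto $\{\mu,\nu\}$ via \eqref{s2:relation-normal} to relate $h$, $\tilde h$, $\widehat h$ for (2)--(3), and combining Codazzi with the Gauss formula for $\partial\Sigma\subset\Sigma$ for (4) — is the standard argument and matches what Wang--Weng--Xia do. Two small bookkeeping points worth making explicit when you write it up: the trace term $e_\alpha\bigl(h(\mu,e_\beta)\bigr)$ vanishes because $h(\mu,\cdot)|_{T\partial\Sigma}\equiv 0$ holds \emph{identically along} $\partial\Sigma$ and $e_\alpha$ is tangent to $\partial\Sigma$ (not merely that it vanishes at a point); and the final rearrangement to $\tilde h_{\beta\gamma}(h_{\mu\mu}\delta_{\alpha\gamma}-h_{\alpha\gamma})$ does need the commutativity $\tilde h_{\alpha\gamma}h_{\gamma\beta}=\tilde h_{\beta\gamma}h_{\gamma\alpha}$, which you correctly obtain from (3) since $\tilde h=\cot\theta\,h$ on $T\partial\Sigma$.
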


\subsection{Spherical caps}
Let $e:=-e_{n+1}=(0,\cdots,0,-1)$. The family of spherical caps lying entirely in $\-{\mathbb R}_{+}^{n+1}$ and intersecting with $\partial \-{\mathbb R}^{n+1}=\mathbb R^n$ at a constant contact angle $\t\in (0,\pi)$ are given by
\begin{align}\label{s2:spherical-caps}
C_{r,\t}(e):=\left\{  x\in \-{\mathbb R}^{n+1}_{+} ~|~|x-r\cos\t e|=r \right\},\quad r>0,
\end{align}
which is part of the round sphere with radius $r$ and centered at $r\cos\t e$. See Figure \ref{fig2}. We also denote $C_{r,\theta}=C_{r,\theta}(e)$ for simplicity when there is no confusion. For unit radius $r=1$, we just write $\mathbb{S}_\theta^n=C_{1,\theta}$ and $\mathbb{B}^{n+1}_\theta=\widehat{C_{1,\theta}}$.
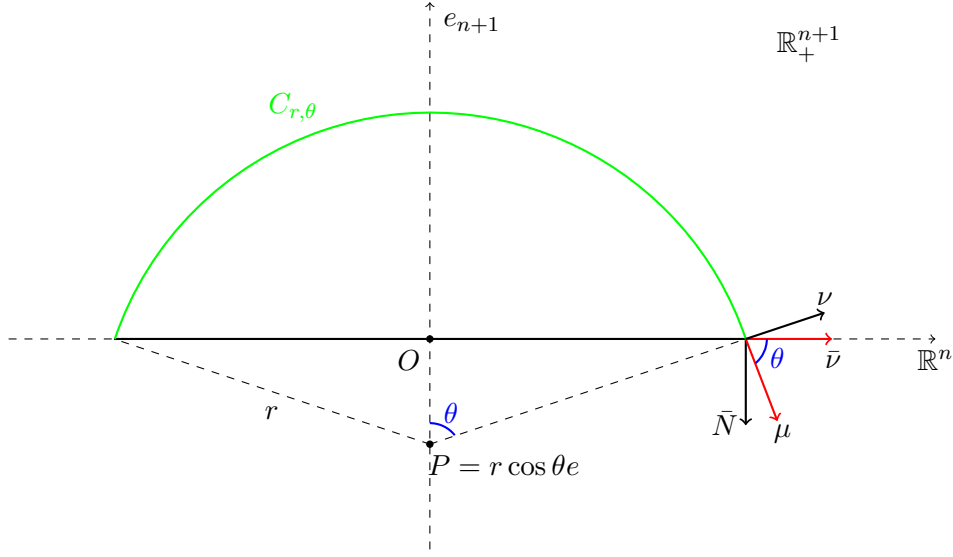
\begin{figure}[h]
	\begin{tikzpicture}[scale=1.4]
	\draw[dashed][->](-4,0)--(4.8,0);
	\draw[thick](-3,0)--(3,0);
	\draw[thick][green](3,0) arc (18.66:161.34:3.162);
	\draw[dashed][->](0,-2)--(0,3.2);
	\filldraw(0,-1) circle(0.03);
	\draw[dashed](0,-1)--(3,0);
	\draw[dashed](0,-1)--(-3,0);
	\draw[thick][red][->](3,0)--(3.82,0);
	\draw[thick][->](3,0)--(3,-0.82);
	\draw[thick][red][->](3,0)--(3.3,-0.78);
	\draw[thick][->](3,0)--(3.75,0.25);
	\node at (0.7,-1.2) {$P=r\cos\theta e$};
	\node at (2.8,-0.8) {$\bar{N}$};
	\node at (3.35,-0.9) {$\mu$};
	\node at (3.83,-0.2) {$\bar{\nu}$};
	\node at (3.75,0.38) {$\nu$};
	\draw[blue][thick](3.2,0) arc(0:-50:0.3);
	\node[blue] at (3.3,-0.18) {$\theta$};
	\node at (4.8,-0.2) {$\mathbb{R}^n$};
	\node at (0.4,3.0) {$e_{n+1}$};
	\filldraw(0,0) circle(0.03);
	\node at (-0.2,-0.2) {$O$};
	\node at (3.6,2.8) {$\mathbb{R}^{n+1}_{+}$};
	\node[green] at (-1.3,2.2) {$C_{r,\theta}$};
	\draw[blue][thick](0,-0.8) arc(90:38:0.3);
	\node[blue] at (0.2,-0.7) {$\theta$};
	\node at (-1.5,-0.7) {$r$};
	\end{tikzpicture}
	\caption{Spherical cap $C_{r,\theta}(e)$ in the half-space}\label{fig2}
\end{figure}

The spherical cap $C_{r,\t}$ satisfies
\begin{align*}
1+\cos\t \langle \nu,e\rangle-\frac{1}{r}\langle x,\nu\rangle=0
\end{align*}
and thus is stationary along the flow \eqref{s1:BGL-flow}. Moreover, it is easy to check that
\begin{align}\label{s2:AF-equality}
\mathcal{V}_{k,\t}(\widehat{C_{r,\t}})=|\mathbb{B}^{n+1}_\theta| r^{n+1-k}, \quad 0\leq k\leq n.
\end{align}
Therefore, $\widehat{C_{r,\t}}$ achieves the equality in the Alexandrov-Fenchel inequalities \eqref{s1:AF-inequality-capillary}.

\subsection{Elementary symmetric polynomials}
For $\kappa=(\kappa_1,\cdots,\kappa_n)\in \mathbb{R}^n$, the $k$th elementary symmetric polynomial function is defined as
\begin{equation*}
  \sigma_k(\kappa)=\sum_{1\leq i_1<\cdots <i_k\leq n}\kappa_{i_1}\cdots \kappa_{i_k},\qquad k=1,\cdots,n
\end{equation*}
and its normalization is denoted by $H_k(\k)=\binom{n}{k}^{-1}\s_k(\k)$. We also take $\s_0(\k)=H_0(\k)=1$ by convention. This definition can be extended to symmetric matrices. Let $W\in \mathrm{Sym}(n)$ be an $n\times n$ symmetric matrix. Denote $\kappa=\kappa(W)$ the eigenvalues of $W$. Set $\sigma_k(W)=\sigma_k(\kappa(W))$. We have
\begin{align*}
  \sigma_k(W) =& \frac{1}{k!}\delta_{i_1,\cdots,i_k}^{j_1,\cdots,j_k}W_{i_1j_1}\cdots W_{i_kj_k},\quad k=1,\cdots,n,
\end{align*}
where $\delta_{i_1,\cdots,i_k}^{j_1,\cdots,j_k}$ is the generalized Kronecker delta.

Let $\G_k^{+}$ be the Garding cone defined by
$$
\G_{k}^{+}:=\{\k\in\mathbb R^n|\sigma_j(\k)>0,\forall 1\leq j\leq k\}
$$
and $\Gamma_+$ be the positive cone
\begin{equation*}
  \Gamma_+:=\{\k\in\mathbb R^n|\kappa_i>0,\forall 1\leq i\leq n\}.
\end{equation*}
We have the following Newton-MacLaurin inequalities (see e.g.\cite[Lemma 2.5]{guan2014}).
\begin{prop}
	For $1\leq \ell<k \leq n$,  we have
	\begin{align}\label{s2:NM-ineq}
	H_\ell H_{k-1} \geq H_{\ell-1}H_k, \quad \forall \k\in \G_k^{+}.
	\end{align}
	Equality holds if and only if $\k=\l(1,\cdots,1)$ for any constant $\l>0$.
\end{prop}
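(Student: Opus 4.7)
The plan is to reduce the claim to the fundamental Newton inequality
\begin{equation*}
H_j(\k)^2 \geq H_{j-1}(\k)\, H_{j+1}(\k) \quad \text{for } \k \in \G_{j+1}^{+},
\end{equation*}
with equality iff $\k$ is a positive multiple of $(1,\ldots,1)$, and then telescope. Since $\G_k^{+} \subset \G_{j+1}^{+}$ for every $1 \leq j \leq k-1$ and $H_j > 0$ throughout $\G_k^{+}$, this fundamental step immediately yields the monotonicity of the ratios
\begin{equation*}
\frac{H_{j+1}(\k)}{H_j(\k)} \leq \frac{H_j(\k)}{H_{j-1}(\k)}, \qquad 1 \leq j \leq k-1,
\end{equation*}
on $\G_k^{+}$.

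Given this monotonicity, I would chain the inequalities for $\ell < k$:
\begin{equation*}
\frac{H_k(\k)}{H_{k-1}(\k)} \;\leq\; \frac{H_{k-1}(\k)}{H_{k-2}(\k)} \;\leq\; \cdots \;\leq\; \frac{H_{\ell+1}(\k)}{H_{\ell}(\k)} \;\leq\; \frac{H_{\ell}(\k)}{H_{\ell-1}(\k)}.
\end{equation*}
Cross-multiplying the outermost terms and using $H_{k-1}, H_{\ell-1} > 0$ on $\G_k^{+}$ gives the desired inequality $H_\ell H_{k-1} \geq H_{\ell-1} H_k$. For the equality case, if equality holds in the final inequality, then every intermediate inequality in the telescoping must be saturated, so in particular $H_{\ell}^2 = H_{\ell-1} H_{\ell+1}$. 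Applying the equality case of the fundamental Newton inequality forces $\k = \l(1,\ldots,1)$ for some $\l > 0$.

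The main obstacle is establishing the fundamental Newton inequality itself. The standard route is to consider the hyperbolic polynomial
\begin{equation*}
p_\k(t) = \prod_{i=1}^{n}(1 + \k_i t) = \sum_{j=0}^{n}\binom{n}{j} H_j(\k)\, t^j,
\end{equation*}
and argue by induction on $n - j$. The base case $j = n-1$ is the classical discriminant inequality for a polynomial with all real roots, and the inductive step is reduced to a polynomial of one lower degree by differentiating $p_\k$ and invoking Rolle's theorem to preserve real-rootedness. The equality analysis at each step in this induction pins down the roots to be equal. Alternatively, one can invoke G{\aa}rding's theory of hyperbolic polynomials directly. In practice, since this is a well-known statement, I would simply cite it from a standard reference such as \cite[Lemma 2.5]{guan2014} rather than reproducing the full derivation.
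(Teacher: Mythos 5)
Your argument is correct, but note that the paper itself provides no proof of this proposition at all---it simply cites \cite[Lemma 2.5]{guan2014}, which is exactly where your proposal ends up. Your sketch (reduce to the fundamental Newton inequality $H_j^2 \geq H_{j-1}H_{j+1}$ with its equality case, derive the monotone chain of ratios $H_{j+1}/H_j$ on $\G_k^+$ using the inclusions $\G_k^+ \subset \G_{j+1}^+$ and the positivity of $H_j$ for $j \leq k$, telescope, and pin down equality by saturating one intermediate step) is the standard and correct derivation, so it is consistent with---and more explicit than---the paper's treatment.
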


In this paper, we will focus on the function $F={H_\ell}/{H_{\ell-1}}$, where $1\leq \ell\leq n$. Sometimes, during the calculation of some evolution equations, we write $F=F(W)=f(\kappa)$ for $\kappa=\kappa(W)$, where we view $F$ as a smooth function of the symmetric matrix $W$ and $f$ as a smooth function of the eigenvalues $\kappa$ of $W$. Denote $\dot{F}^{k\ell}, \ddot{F}^{k\ell,pq}$ and $\dot{f}^i, \ddot{f}^{ij}$ as the first and second derivatives of $F$ and $f$ with respect to their arguments. The derivatives of $f$ and $F$ are related in the following way: if $W$ is diagonal, then
\begin{equation*}
  \dot{F}^{k\ell}(W)=\dot{f}^k(\kappa)\delta_{k\ell}.
\end{equation*}
At any diagonal $W$ with distinct eigenvalues, the second derivatives $\ddot{F}$ of $F$ in direction $B\in \mathrm{Sym}(n)$ are given in terms of $\dot{f}$ and $\ddot{f}$ by  (see \cite{And2007}):
\begin{equation}\label{s2:F-ddt}
  \ddot{F}^{ij,k\ell}(W)B_{ij}B_{k\ell}=\sum_{i,k}\ddot{f}^{ik}(\kappa)B_{ii}B_{kk}+2\sum_{i>k}\frac{\dot{f}^i(\kappa)-\dot{f}^k(\kappa)}{\kappa_i-\kappa_k}B_{ik}^2.
\end{equation}
This formula makes sense as a limit in the case of any repeated values of $\kappa_i$. In what follows, we will drop the arguments when $F$ and $f$, and their
derivatives, are evaluated at $W$ or $\kappa=\kappa(W)$.

\begin{lem}\label{s2.lem3}
Let $F={H_\ell}/{H_{\ell-1}}$, where $1\leq \ell\leq n$.
\begin{enumerate}
  \item $F$ is homogeneous of degree one, so $F=\dot{F}^{k\ell}W_{k\ell}$.
  \item $F$ is concave in $\Gamma_{+}$, and so $(\dot{f}^i-\dot{f}^j)(\kappa_i-\kappa_j)\leq 0$ for $i\neq j$. If $W$ is diagonal, we also have $(\dot{F}^{ii}-\dot{F}^{jj})(W_{ii}-W_{jj})\leq 0$ for $i\neq j$. Moreover, $\sum_i\dot{f}^i\geq 1$ on $\Gamma_+$.
  \item If $\k\in \G_{+}$, then there holds
\begin{align}\label{s2:key-inequality}
F^2\leq \dot{F}^{k\ell}(W^2)_{k\ell}\leq (n-\ell+1)F^2.
\end{align}
where $(W^2)_{k\ell}=W_{kp}W_{p\ell}$.
\end{enumerate}
\end{lem}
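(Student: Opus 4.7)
The plan is to verify the three items in sequence. Item (1) is immediate: $H_\ell$ and $H_{\ell-1}$ are symmetric polynomials of degree $\ell$ and $\ell-1$ in the eigenvalues of $W$, so $F=H_\ell/H_{\ell-1}$ is homogeneous of degree one, and Euler's identity yields $F=\dot{F}^{k\ell}W_{k\ell}$.

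For item (2), the concavity of $H_\ell/H_{\ell-1}$ on $\Gamma_+$ is a classical result in the theory of symmetric functions which I would invoke directly. Given concavity, the monotonicity $(\dot{f}^i-\dot{f}^j)(\kappa_i-\kappa_j)\leq 0$ follows by combining symmetry of $f$ with concavity: along the linear interpolation $\tilde{\kappa}(s)$ from $\kappa$ to its $(i,j)$-transposition, symmetry yields $f(\tilde{\kappa}(1))=f(\tilde{\kappa}(0))$, while concavity of $s\mapsto f(\tilde{\kappa}(s))$ forces the one-sided derivative at $s=0$ to be non-negative, which rearranges to the desired sign. The matrix statement at a diagonal $W$ is then identical, since there $\dot{F}^{ii}=\dot{f}^i$ and $W_{ii}=\kappa_i$. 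The bound $\sum_i\dot{f}^i\geq 1$ is obtained by applying the concavity estimate
\[
F(1,\ldots,1)\leq F(\kappa)+\sum_i \dot{f}^i(\kappa)(1-\kappa_i),
\]
combined with $F(1,\ldots,1)=1$ and Euler's relation $\sum_i \dot{f}^i\kappa_i=F(\kappa)$.

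Item (3) is the computational core. After diagonalizing $W$, the assertion reduces to bounding $\sum_i \dot{f}^i \kappa_i^2$. Differentiating the relation $H_\ell=FH_{\ell-1}$ gives $\dot{f}^i=(\dot{H}_\ell^i-F\dot{H}_{\ell-1}^i)/H_{\ell-1}$. Using $\dot{H}_k^i=\binom{n}{k}^{-1}\sigma_{k-1}(\kappa|i)$, where $(\kappa|i)$ denotes the vector with $\kappa_i$ deleted, together with the elementary identity
\[
\sum_i \kappa_i^2 \sigma_{k-1}(\kappa|i)=\sigma_1\sigma_k-(k+1)\sigma_{k+1},
\]
which follows by writing $\kappa_i\sigma_{k-1}(\kappa|i)=\sigma_k-\sigma_k(\kappa|i)$ and applying $\sum_i \kappa_i \sigma_k(\kappa|i)=(k+1)\sigma_{k+1}$, the sum collapses after routine manipulation to the closed form
\[
\sum_i \dot{f}^i \kappa_i^2 = (n-\ell+1)F^2 - (n-\ell)\frac{H_{\ell+1}}{H_{\ell-1}}.
\]

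From this explicit identity both bounds drop out. The upper bound $\sum_i\dot{f}^i\kappa_i^2\leq(n-\ell+1)F^2$ amounts to $H_{\ell+1}/H_{\ell-1}\geq 0$, which holds on $\Gamma_+$ (and is automatic when $\ell=n$ since the coefficient $n-\ell$ vanishes). The lower bound $F^2\leq\sum_i\dot{f}^i\kappa_i^2$ rearranges to $(n-\ell)(F^2-H_{\ell+1}/H_{\ell-1})\geq 0$, equivalently $H_\ell^2\geq H_{\ell-1}H_{\ell+1}$, which is exactly the Newton-MacLaurin inequality \eqref{s2:NM-ineq}. The only genuine obstacle is producing the closed-form identity for $\sum_i\dot{f}^i\kappa_i^2$; once that is in hand, everything else reduces to standard symmetric function estimates.
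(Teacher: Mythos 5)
Your proposal is correct and follows essentially the same route as the paper: items (1) and (2) are dismissed as standard, and item (3) is obtained by expanding $\dot f^i$ via the quotient rule, invoking the identity $\sum_i\dot\sigma_k^i\,\kappa_i^2=\sigma_1\sigma_k-(k+1)\sigma_{k+1}$ to collapse $\sum_i\dot f^i\kappa_i^2$ to $(n-\ell+1)F^2-(n-\ell)H_{\ell+1}/H_{\ell-1}$, and then reading off both bounds from $H_{\ell+1}H_{\ell-1}\geq 0$ and Newton--MacLaurin $H_\ell^2\geq H_{\ell-1}H_{\ell+1}$. The only cosmetic difference is that you supply a one-line proof of the elementary $\sigma$-identity that the paper simply cites.
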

\begin{proof}
These properties are well known. See \cite{guan2014} and \cite{Andrews-McCoy-Zheng}. We include a proof for (iii) for convenience of the readers.

Firstly, we have
$$
F=\frac{\s_{\ell}(\k)}{\s_{\ell-1}(\k)}\frac{\s_{\ell-1}(I)}{\s_{\ell}(I)},
$$
where $I=(1,\cdots,1)\in \mathbb{R}^n$. Using the formula (see \cite[Proposition 2.2]{guan2014})
\begin{equation*}
  \sum_{i=1}^n \dot{\sigma}_\ell^i(\kappa)\k_i^2=\s_1(\k)\s_\ell(\k)-(\ell+1)\s_{\ell+1}(\k),
\end{equation*}
we get
\begin{align*}
\dot{F}^{k\ell}(h^2)_{k\ell}=&\sum_{i=1}^{n}\dot{f}^i\k_i^2\\
=&\frac{\s_{\ell-1}(I)}{\s_{\ell}(I)} \frac{\dot{\s}_{\ell}^i(\kappa)\k_i^2\s_{\ell-1}(\k)-\s_{\ell}(\k)\dot{\s}^i_{\ell-1}(\kappa)\k_i^2}{\s_{\ell-1}^2(\k)}\\
=&\frac{\s_{\ell-1}(I)}{\s_{\ell}(I)} \frac{\ell\s_{\ell}^2(\k)-(\ell+1)\s_{\ell+1}(\k)\s_{\ell-1}(\k)}{\s_{\ell-1}^2(\k)}\\
=&\frac{(n-\ell+1)H_{\ell}^2-(n-\ell)H_{\ell-1}H_{\ell+1}}{H_{\ell-1}^2}.
\end{align*}
Then the inequality \eqref{s2:key-inequality} follows from the Newton-MacLaurin inequality
$$
H_{\ell}^2 \geq H_{\ell-1}H_{\ell+1}
$$
and $H_{\ell-1}H_{\ell+1}\geq 0$ since $\k\in \G_{+}$.
\end{proof}

The following inverse-concave property on $F$ can be found in \cite{And2007,AW18,WeiX15}.
\begin{lem}
Let $F={H_\ell}/{H_{\ell-1}}$, where $1\leq \ell\leq n$. If $\k=\kappa(W)\in \G_{+}$, then $F(W)=f(\kappa)$ is inverse concave. That is, its dual function
\begin{equation*}
  f_*(\tau):=\frac{1}{f(\frac{1}{\tau_1},\cdots,\frac{1}{\tau_n})}
\end{equation*}
is concave, where $\tau=(\tau_1,\cdots,\tau_n)$ with $\tau_i>0$. Moreover, $f$ satisfies
 \begin{equation}\label{s2.d2F}
   \sum_{k,\ell=1}^n\ddot{f}^{k\ell}y_ky_\ell+2\sum_{k=1}^n\frac {\dot{f}^k}{\kappa_k}y_k^2~\geq ~2f^{-1}(\sum_{k=1}^n\dot{f}^ky_k)^2
 \end{equation}
 for any $y=(y_1,\cdots,y_n)\in \mathbb{R}^n$, and
\begin{equation}\label{s2:inv-conc}
\frac{\dot{f}^k-\dot{f}^\ell}{\kappa_k-\kappa_\ell}+\frac{\dot{f}^k}{\kappa_\ell}+\frac{\dot{f}^\ell}{\kappa_k}\geq~0,\quad \forall~k\neq \ell.
\end{equation}
\end{lem}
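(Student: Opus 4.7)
The plan is to split the proof into two conceptual parts: first identify the dual function $f_*$ explicitly and use it to establish the inverse-concavity of $f$; then derive the two inequalities \eqref{s2.d2F} and \eqref{s2:inv-conc} as the standard analytic reformulation of inverse-concavity due to Andrews.

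Step 1 (Inverse-concavity via explicit dual). On the positive cone $\Gamma_+$, I would use the classical identity
\begin{equation*}
\sigma_k(1/\kappa_1,\ldots,1/\kappa_n)=\frac{\sigma_{n-k}(\kappa)}{\sigma_n(\kappa)},\qquad \kappa\in\Gamma_+.
\end{equation*}
Applied to $f=\binom{n}{\ell}^{-1}\binom{n}{\ell-1}\sigma_\ell/\sigma_{\ell-1}$, this gives
\begin{equation*}
f_*(\tau)=\frac{1}{f(1/\tau)}=\frac{\binom{n}{\ell}}{\binom{n}{\ell-1}}\cdot\frac{\sigma_{n-\ell+1}(\tau)}{\sigma_{n-\ell}(\tau)},
\end{equation*}
which is a positive constant multiple of $H_{n-\ell+1}(\tau)/H_{n-\ell}(\tau)$. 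Since the ratio $H_m/H_{m-1}$ is concave on $\Gamma_m^+$ (and in particular on $\Gamma_+\subset \Gamma_m^+$), with $m=n-\ell+1$, this shows that $f_*$ is concave on $\Gamma_+$. Hence $f$ is inverse-concave.

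Step 2 (From inverse-concavity to the two inequalities). Here I would use the general characterization: a symmetric, one-homogeneous, monotone $f$ on $\Gamma_+$ is inverse-concave iff both \eqref{s2.d2F} and \eqref{s2:inv-conc} hold. Concretely, define $F_*(W):=1/F(W^{-1})$ on positive-definite symmetric matrices and apply the second-variation formula \eqref{s2:F-ddt} to $F_*$. At a diagonal $W=\mathrm{diag}(\kappa_1,\ldots,\kappa_n)\in\Gamma_+$, the chain rule together with $W^{-1}=\mathrm{diag}(1/\kappa_i)$ expresses $\dot{F}_*$ and $\ddot{F}_*$ in terms of $\dot F$ and $\ddot F$; substituting into the diagonal part of \eqref{s2:F-ddt} and using $\ddot{F}_*^{ii,kk}B_{ii}B_{kk}\le 0$ (concavity of $F_*$) produces, after the substitution $y_k=B_{kk}/\kappa_k^2$ and some algebra using homogeneity ($F=\dot f^k\kappa_k$), exactly the pointwise inequality \eqref{s2.d2F}. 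The off-diagonal (eigenvalue-difference) part of \eqref{s2:F-ddt} for $F_*$ reads
\begin{equation*}
\frac{\dot{f}_*^i-\dot{f}_*^k}{\tau_i-\tau_k}\le 0,
\end{equation*}
and rewriting $\dot{f}_*^k=\dot f^k(\kappa)/(f(\kappa)^2\tau_k^2)$ at $\tau_i=1/\kappa_i$ and clearing denominators yields precisely \eqref{s2:inv-conc}.

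The main obstacle is the bookkeeping in Step 2: keeping track of the chain-rule factors in the change of variables $\tau_i=1/\kappa_i$, distinguishing the diagonal from the off-diagonal contributions in \eqref{s2:F-ddt} for $F_*$, and using the homogeneity relation $F=\dot F^{k\ell}W_{k\ell}$ (Lemma~\ref{s2.lem3}(i)) to clean up the resulting algebraic expression into the stated form. Once the dual is identified as in Step 1, however, no new analytic input beyond the known concavity of $H_m/H_{m-1}$ is needed, so the proof is essentially a computation.
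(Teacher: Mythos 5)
The paper does not prove this lemma; it simply cites \cite{And2007,AW18,WeiX15} and moves on. Your proposal actually supplies a proof, so the comparison is between your argument and what one would reconstruct from those references rather than anything in the paper itself.

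Your approach is correct and cleanly organized. The explicit computation in Step 1 is exactly right: the identity $\sigma_k(1/\kappa) = \sigma_{n-k}(\kappa)/\sigma_n(\kappa)$ gives, after simplifying the binomial factors (using $\binom{n}{\ell}=\binom{n}{n-\ell}$ and $\binom{n}{\ell-1}=\binom{n}{n-\ell+1}$), that $f_*$ is not merely a constant multiple but \emph{equal} to $H_{n-\ell+1}/H_{n-\ell}$; the concavity of the latter on $\Gamma_+$ is the known fact quoted in Lemma~\ref{s2.lem3}(2). Step 2 also works: with $\sigma=1/\kappa$, one has $\dot{f}_*^i(\kappa)=\dot{f}^i(\sigma)\sigma_i^2/f(\sigma)^2$, and differentiating once more gives $\ddot{f}_*^{ik}(\kappa)=-\sigma_k^2 f(\sigma)^{-2}\bigl(\ddot{f}^{ik}(\sigma)\sigma_i^2+2\dot{f}^i(\sigma)\sigma_i\delta_{ik}\bigr)+2f(\sigma)^{-3}\dot{f}^i(\sigma)\dot{f}^k(\sigma)\sigma_i^2\sigma_k^2$; multiplying the concavity inequality by $-f(\sigma)^2$ and substituting $y_i=\sigma_i^2 B_{ii}=B_{ii}/\kappa_i^2$ gives \eqref{s2.d2F} at $\sigma$, which holds for all $\sigma\in\Gamma_+$ since $\kappa\mapsto 1/\kappa$ is a bijection of $\Gamma_+$. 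Likewise, clearing denominators in $(\dot{f}_*^i-\dot{f}_*^k)/(\kappa_i-\kappa_k)\le 0$ reduces to $(\dot{f}^i\sigma_i^2-\dot{f}^k\sigma_k^2)/(\sigma_i-\sigma_k)\ge 0$, which is algebraically identical to \eqref{s2:inv-conc} at $\sigma$.

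Two small gaps worth flagging so the argument is airtight. First, for the off-diagonal inequality you invoke the concavity of the matrix function $F_*$ and read it off from the eigenvalue-difference term of \eqref{s2:F-ddt}; but Step 1 only establishes concavity of the scalar function $f_*$ on $\mathbb{R}^n$. You should explicitly cite the classical fact (Davis' theorem, or the equivalence stated right after \eqref{s2:F-ddt} in Andrews \cite{And2007}) that for a symmetric function, concavity of $f_*$ on the eigenvalue cone is equivalent to concavity of the associated matrix function $F_*$, so that the off-diagonal term in \eqref{s2:F-ddt} is indeed nonpositive. Second, the parenthetical ``some algebra using homogeneity ($F=\dot{f}^k\kappa_k$)'' is a red herring: the chain-rule computation above produces \eqref{s2.d2F} without invoking Euler's identity at all. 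Neither point changes the validity of the argument, but both are worth making explicit.
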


\section{Evolution equations}\label{sec:3}
In this section, we derive the evolution equations along the flow \eqref{s1:BGL-flow}. We first recall some basic evolution equations along a general flow:
\begin{align}\label{s3.flow}
(\partial_tx)^\bot=\mathcal{F}\nu,
\end{align}
for a smooth function $\mathcal{F}$. If $\Sigma_t\subset \-{\mathbb R}_+^{n+1}$ is a family of smooth $\t$-capillary hypersurfaces, given by the embedding $x(\cdot,t): M\to \-{\mathbb R}_+^{n+1}$, evolving along \eqref{s3.flow}, then the tangential component $V:=(\partial_t x)^\top$ must satisfy
\begin{align*}
V|_{\partial \Sigma_t}=\mathcal{F}\cot \t \mu+\widetilde{V},
\end{align*}
where $\widetilde{V}\in T (\partial \Sigma_t)$. This follows from the fact that $x(\cdot,t)|_{\partial M}$ is contained in $\partial \overline{\mathbb R}^{n+1}_+$ and hence
\begin{align*}
(\mathcal{F}\nu+V)|_{\partial \Sigma_t}=\partial_t x|_{\partial M} \in T\mathbb R^n.
\end{align*}
By the relation \eqref{s2:relation-normal} along $\partial \Sigma_t$, we get
\begin{align*}
\nu=\frac{1}{\sin\t}\-\nu-\cot\t \mu.
\end{align*}
It follows that $(V-\mathcal{F}\cot\t\mu)|_{\partial\Sigma_t} \in T\mathbb R^n \cap T\Sigma_t=T(\partial \Sigma_t)$. Up to a tangential diffeomorphism of $\partial M$, we may assume that $\widetilde{V}=0$. For simplicity, we always assume that
\begin{align}\label{s3:tangential-relation}
V|_{\partial \Sigma_t}=\mathcal{F}\cot\t \mu.
\end{align}
Therefore, the flow \eqref{s3.flow} is equivalent to the flow
\begin{align}\label{s2:general-flow}
\partial_tx=\mathcal{F}\nu+V,
\end{align}
where $V\in T\Sigma_t$ is a tangential vector field of $\Sigma_t$ satisfying \eqref{s3:tangential-relation} on the boundary $\partial\Sigma_t$.

Let $g_{ij}$, $\nu$, $h_{ij}$ and $H$ be the induced metric, the unit outward normal, the second fundamental form and the mean curvature of the solution $\Sigma_t$ of the flow \eqref{s2:general-flow}, and $F=F(h_i^j)$ be a smooth function of the Weingarten matrix $(h_i^j)$ of $\Sigma_t$. We have the following general equations along the flow \eqref{s2:general-flow}, see \cite{HP99} for $V=0$ and  \cite[Proposition 2.7]{Wang-Weng-Xia2022} for a general $V$.
\begin{prop}\label{s3:prop-evol-general-flow}
Along the flow \eqref{s2:general-flow}, there holds
\begin{align}
\partial_t g_{ij}=&2\mathcal{F}h_{ij}+\nabla_i V_j+\nabla_j V_i, \label{s2:metric-general-f}\\
\partial_t \nu=&-\nabla \mathcal{F}+h(e_i,V)e_i, \label{s2:normal-general-f}\\
\partial_t h_{ij}=&-\nabla_i\nabla_j \mathcal{F}+\mathcal{F} (h^2)_i^j+\nabla_V h_{ij}+h_j^k \nabla_i V_k+h_i^k \nabla_j V_k,\label{s2:hij-general-f}\\
\partial_t H=&-\D \mathcal{F}-|A|^2 \mathcal{F}+\nabla_V H,\label{s2:H-general-f}\\
\partial_t F=&-\dot{F}^{k\ell}\nabla_k\nabla_\ell \mathcal{F}-\mathcal{F}\dot{F}^{k\ell} (h^2)_{k\ell}+\nabla_V F, \label{s2:F-general-f}
\end{align}
where $(h^2)_{ij}=h_{ik}h^k_j$ and $|A|^2=h^j_k h^k_j$.
\end{prop}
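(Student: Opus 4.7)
The plan is to compute each evolution equation directly by differentiating the relevant geometric quantity in $t$ and converting ambient derivatives to intrinsic ones via the Gauss formula $\partial_i \partial_j x = \Gamma^k_{ij}\partial_k x - h_{ij}\nu$ and its companion Weingarten identity $\partial_i \nu = h_i^k \partial_k x$. Because the tangential field $V$ generates a time-dependent reparametrization of $M$, I expect the contribution of $V$ to each formula to appear as the Lie derivative along $V$ of the quantity in question, added on top of the pure normal flow evolution (the case $V=0$ from \cite{HP99}). An efficient route is therefore to re-derive the $V=0$ case and then verify the additional Lie derivative terms for each formula.

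For the metric, I would differentiate $g_{ij} = \langle \partial_i x, \partial_j x\rangle$ in $t$, expand $\partial_i(\mathcal{F}\nu+V)$ with the Gauss–Weingarten identities: the normal component $(\partial_i \mathcal{F})\nu$ is killed by the inner product with $\partial_j x$, while $\mathcal{F}\partial_i \nu = \mathcal{F} h_i^k \partial_k x$ contributes $\mathcal{F} h_{ij}$, and $\partial_i V = (\nabla_i V^k)\partial_k x - h_{ik}V^k \nu$ contributes $\nabla_i V_j$. Symmetrizing gives \eqref{s2:metric-general-f}. For $\partial_t \nu$, since $|\nu|^2 \equiv 1$ forces $\partial_t \nu$ to be tangential, I would determine it by differentiating $\langle \nu, \partial_i x\rangle = 0$ in $t$; inserting the identity for $\partial_t \partial_i x$ then yields $\langle \partial_t \nu, \partial_i x\rangle = -\partial_i \mathcal{F} + h_{ik}V^k$, hence \eqref{s2:normal-general-f}.

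The evolution of $h_{ij}$ is the most delicate step and the main bookkeeping obstacle. Starting from $h_{ij} = -\langle \partial_i \partial_j x, \nu\rangle$, differentiating in $t$ produces $-\langle \partial_t\partial_i\partial_j x, \nu\rangle - \langle \partial_i \partial_j x, \partial_t \nu\rangle$. Expanding $\partial_i \partial_j(\mathcal{F}\nu+V)$ by applying the Gauss and Weingarten identities twice, then separating normal and tangential parts, one recognizes the appearance of the covariant Hessian $\nabla_i\nabla_j \mathcal{F}$, the Simons-type term $(h^2)_{ij}$ (coming from $\partial_i\nu$ paired with $\partial_j\nu$), and derivatives of $V$. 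A useful sanity check, which I would run in parallel, is that the tangential contribution must equal the Lie derivative $\mathcal{L}_V h_{ij} = \nabla_V h_{ij} + h_{kj}\nabla_i V^k + h_{ik}\nabla_j V^k$ of $h$ viewed as a $(0,2)$ tensor; this matches the last three terms on the right-hand side of \eqref{s2:hij-general-f} and tames the bookkeeping.

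The remaining two evolutions are then formal consequences. Equation \eqref{s2:H-general-f} follows by contracting \eqref{s2:hij-general-f} with $g^{ij}$ and using $\partial_t g^{ij} = -2\mathcal{F}(h)^{ij} - \nabla^i V^j - \nabla^j V^i$: the $\mathcal{F}|A|^2$ contributions combine with the correct sign, the $\nabla V$ pieces cancel pairwise by symmetry of $h_{ij}$, and $g^{ij}\nabla_V h_{ij}$ reassembles into $\nabla_V H$ since $\nabla_V$ commutes with metric contractions. Finally, \eqref{s2:F-general-f} follows from the chain rule $\partial_t F = \dot F^{k\ell}\partial_t h_k^\ell$ together with the evolution of $h_k^\ell = g^{\ell m}h_{km}$; the symmetry and homogeneity identities for $\dot F^{k\ell}$ make the $V$-terms telescope into $\nabla_V F$, while the remaining terms give $-\dot F^{k\ell}\nabla_k\nabla_\ell \mathcal{F} - \mathcal{F}\dot F^{k\ell}(h^2)_{k\ell}$. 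Both endpoints can be cross-checked against the $V=0$ formulas in \cite{HP99}.
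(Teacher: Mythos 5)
Your derivation is correct. The paper itself offers no proof of this proposition, citing \cite{HP99} for the case $V=0$ and \cite[Proposition 2.7]{Wang-Weng-Xia2022} for general $V$, so there is no "paper's argument" to compare against; what you wrote is the standard direct computation that those references carry out. The Gauss--Weingarten bookkeeping for \eqref{s2:metric-general-f}--\eqref{s2:hij-general-f} is set up properly, and your observation that the tangential field's contribution must assemble into $\mathcal{L}_V$ of the evolving tensor is both correct and the cleanest way to organize the calculation. One small imprecision worth noting: when you pass from \eqref{s2:hij-general-f} to \eqref{s2:H-general-f} you say "the $\nabla V$ pieces cancel pairwise by symmetry of $h_{ij}$," but the cancellation is really between $(\partial_t g^{ij})h_{ij}$ and the $h\,\nabla V$ terms coming from $g^{ij}\partial_t h_{ij}$; it is not a symmetry of $h$ that produces it but rather the fact that $\mathcal{L}_V(g^{ij}h_{ij})=V^k\nabla_k H$ because $H$ is a scalar. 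The same remark applies to the derivation of \eqref{s2:F-general-f}: the $\nabla V$ terms drop out because $F(h_i^j)$ is a scalar, so $\mathcal{L}_V F=\nabla_V F$, and one checks directly (e.g.\ in a frame diagonalizing $h$) that $\dot F^i_j\bigl(h^j_k\nabla_i V^k-h_i^k\nabla_k V^j\bigr)=0$, using that $\dot F$ and $\mathcal{W}$ commute since $F$ is a symmetric function of the eigenvalues. With those caveats your outline would yield a complete and correct proof.
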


In the following,  we focus on the locally constrained inverse curvature flow \eqref{s1:BGL-flow}. Denote $F=H_{\ell}/H_{\ell-1}$ for $\ell=1,\cdots,n$ and
\begin{equation}\label{s3.F}
\mathcal{F}=\frac{1+\cos\t\langle \nu,e\rangle}{F}-\langle x,\nu\rangle.
\end{equation}
Then the flow equation \eqref{s1:BGL-flow} is equivalent to
\begin{align}\label{s2:BGL-flow}
\partial_tx=\mathcal{F}\nu+V,
\end{align}
where $\mathcal{F}$ is given by \eqref{s3.F} and $V\in T\Sigma_t$ is a tangential vector field of $\Sigma_t$ satisfying \eqref{s3:tangential-relation}. We note that the capillary boundary condition $\langle \overline{N}\circ x, \nu\rangle =\cos(\pi-\theta)$ implies that $\mathcal{F}$ satisfies
\begin{equation}\label{s3:DF}
  \nabla_\mu \mathcal{F}=\cot\theta h_{\mu\mu}\mathcal{F}
\end{equation}
on the boundary $\partial\Sigma_t$. This follows from taking derivatives of the boundary condition $\langle \overline{N}\circ x, \nu\rangle =\cos(\pi-\theta)$ in time $t$ and using \eqref{s2:relation-normal}, \eqref{s2:normal-general-f} and \eqref{s3:tangential-relation}.
\begin{prop}
	Along the flow \eqref{s2:BGL-flow}, we have the following evolution equations:
	\begin{enumerate}[(i)]
		\item The induced metric $g_{ij}$ evolves by
		\begin{align}\label{s3:evol-metric}
		\partial_tg_{ij}=2\(\frac{1+\cos\t\langle \nu,e\rangle}{F}-\langle x,\nu\rangle \)h_{ij}+\nabla_i V_j+\nabla_j V_i.
		\end{align}
		\item The second fundamental form $h_{ij}$ evolves by
		\begin{align}\label{s3:evol-2nd-fundamental-form}
		\partial_th_{ij}=&\frac{1+\cos\t\langle \nu,e\rangle}{F^2}\dot{F}^{k\ell}\nabla_k \nabla_\ell h_{ij}+\left\langle V+x-\frac{\cos\t}{F}e,\nabla h_{ij}\right\rangle\nonumber\\
&+\frac{1+\cos\t\langle \nu,e\rangle}{F^2}\ddot{F}^{k\ell,pq}\nabla_i h_{k\ell}\nabla_j h_{pq} \nonumber\\
		&-\frac{2(1+\cos\t\langle \nu,e\rangle)}{F^3}\nabla_iF\nabla_jF\nonumber\\
		&+\frac{\cos\theta}{F^2}\left(h_i^k\langle e_k,e\rangle \nabla_jF+h_j^k\langle e_k,e\rangle \nabla_iF\right)\nonumber\\
		&+\(\frac{\cos\t\langle \nu,e\rangle}{F}-2\langle x,\nu\rangle\) (h^2)_{ij}+h_j^k \nabla_i V_k+h_i^k\nabla_j V_k \nonumber\\
&+\(1+\frac{1+\cos\t\langle \nu,e\rangle}{F^2}\dot{F}^{k\ell}(h^2)_{k\ell}\) h_{ij}.
		\end{align}
		\item The curvature function $F$ evolves by
		\begin{align}\label{s3:evol-F}
	    \partial_t F=&\frac{1+\cos\t\langle \nu,e\rangle}{F^2}\dot{F}^{k\ell}\nabla_k\nabla_\ell F+\left\langle V+x-\frac{\cos\t}{F}e,\nabla F\right\rangle\nonumber\\
		&-\frac{2(1+\cos\t\langle \nu,e\rangle)}{F^3}\dot{F}^{k\ell}\nabla_k F\nabla_\ell F\nonumber\\
		&+\frac{2\cos\t}{F^2}\dot{F}^{k\ell}\nabla_k F h_\ell^j\langle e_j,e\rangle+F-\frac{1}{F}\dot{F}^{k\ell}(h^2)_{k\ell},
		\end{align}
		and
		\begin{align}\label{s3:bdry-F}
		\nabla_\mu F=0, \quad \text{on $\partial \Sigma_t$}.
		\end{align}
		\item The mean curvature $H$ evolves by
		\begin{align}\label{s3:evol-mean-curvature}
		\partial_t H=&\frac{1+\cos\t\langle \nu,e\rangle}{F^2}\dot{F}^{k\ell}\nabla_k \nabla_\ell H+\left\langle V+x-\frac{\cos\t}{F}e,\nabla H\right\rangle\nonumber\\
		&+\frac{1+\cos\t\langle \nu,e\rangle}{F^2}\ddot{F}^{k\ell,pq}\nabla_i h_{k\ell}\nabla_i h_{pq}\nonumber\\
&-\frac{2(1+\cos\t\langle \nu,e\rangle)}{F^3}|\nabla F|^2+\frac{2\cos\t}{F^2}\langle \nabla F,\nabla \langle \nu,e\rangle\rangle \nonumber\\
		&+\(\frac{1+\cos\t\langle \nu,e\rangle}{F^2}\dot{F}^{k\ell}(h^2)_{k\ell}+1\)H\nonumber\\
&-\frac{2+\cos\t\langle \nu,e\rangle}{F}|A|^2 .
		\end{align}
		If $\Sigma_t$ is a strictly convex solution to the flow \eqref{s2:BGL-flow}, then
		\begin{align}\label{s3:bdry-H}
		\nabla_\mu H\leq 0, \quad \text{on $\partial \Sigma_t$}.
		\end{align}
\item The support function $u=\langle x,\nu\rangle$ evolves by
\begin{align}\label{s3.evl-u}
  \partial_tu= & \frac{1+\cos\t\langle \nu,e\rangle}{F^2}\dot{F}^{k\ell}\nabla_k \nabla_\ell u +\left\langle V+x-\frac{\cos\t}{F}e,\nabla u\right\rangle\nonumber\\
  & +u\left(\frac{1+\cos\t\langle \nu,e\rangle}{F^2}\dot{F}^{ij}(h^2)_{ij}-1\right).
\end{align}
\item The function $\langle \nu,e\rangle $ evolves by
\begin{align}\label{s3.evl-nue}
  \partial_t\langle \nu,e\rangle = & \frac{1+\cos\t\langle \nu,e\rangle}{F^2}\dot{F}^{k\ell}\nabla_k \nabla_\ell \langle \nu,e\rangle  +\left\langle V+x-\frac{\cos\t}{F}e,\nabla \langle \nu,e\rangle \right\rangle\nonumber\\
  & +\frac{1+\cos\t\langle \nu,e\rangle}{F^2}\dot{F}^{ij}(h^2)_{ij}\langle \nu,e\rangle .
\end{align}
	\end{enumerate}
\end{prop}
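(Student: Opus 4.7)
The approach is straightforward but computational: specialize the general evolution equations in Proposition 3.1 to the explicit speed
\begin{equation*}
\mathcal{F}=\frac{\phi}{F}-u,\qquad \phi:=1+\cos\theta\langle\nu,e\rangle,\qquad u:=\langle x,\nu\rangle.
\end{equation*}
The only nontrivial calculation is that of $\nabla_i\nabla_j\mathcal{F}$; everything else is substitution and re-indexing. First I would record the identities $\nabla_i u=h_i^k\langle x,e_k\rangle$, $\nabla_i\langle\nu,e\rangle=h_i^k\langle e_k,e\rangle$, together with their second derivatives
\begin{align*}
\nabla_i\nabla_j u &= \nabla^k h_{ij}\langle x,e_k\rangle + h_{ij} - (h^2)_{ij}u,\\
\nabla_i\nabla_j\langle\nu,e\rangle &= \nabla^k h_{ij}\langle e_k,e\rangle - (h^2)_{ij}\langle\nu,e\rangle,
\end{align*}
each of which follows from Codazzi combined with the ambient identities $D_Y e_k=-h_{Yk}\nu$, $D_Y\nu=h_Y^k e_k$. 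Substituting the resulting expansion of $\nabla_i\nabla_j\mathcal{F}$ into \eqref{s2:hij-general-f} and applying $\dot F^{k\ell}h_{k\ell}=F$ and $\dot F^{k\ell}\nabla_m h_{k\ell}=\nabla_m F$ yields \eqref{s3:evol-2nd-fundamental-form}; then \eqref{s3:evol-F} follows by substituting $\nabla_k\nabla_\ell\mathcal{F}$ into \eqref{s2:F-general-f}, while \eqref{s3:evol-mean-curvature} is obtained by the $g^{ij}$-trace of \eqref{s3:evol-2nd-fundamental-form} together with the metric correction from \eqref{s2:metric-general-f}. For \eqref{s3.evl-u} and \eqref{s3.evl-nue} I would compute $\partial_t u$ and $\partial_t\langle\nu,e\rangle$ directly from \eqref{s2:normal-general-f}, then invoke the two second-derivative identities above in the reverse direction to absorb all second-order terms into the operator $(\phi/F^2)\dot F^{k\ell}\nabla_k\nabla_\ell$ and group the leftover tangential transport into $\langle V+x-(\cos\theta/F)e,\nabla(\cdot)\rangle$.

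For the boundary conditions, the starting point is \eqref{s3:DF}. On $\partial\Sigma_t$ one has $\bar N=e$, so the capillary condition yields $\langle\nu,e\rangle=-\cos\theta$ and hence $\phi|_{\partial\Sigma_t}=\sin^2\theta$. Using \eqref{s2:relation-normal} and $\langle x,\bar N\rangle=0$ one gets $\langle\mu,e\rangle=\sin\theta$ and $\langle x,\mu\rangle=\cot\theta\,u$, while $h_{\mu\alpha}=0$ by Proposition 2.3(1). A direct expansion gives
\begin{equation*}
\nabla_\mu\mathcal{F}=\frac{h_{\mu\mu}\cos\theta\sin\theta}{F}-\frac{\sin^2\theta}{F^2}\nabla_\mu F-h_{\mu\mu}\cot\theta\,u,
\end{equation*}
and matching with $\cot\theta\,h_{\mu\mu}\mathcal{F}=\cot\theta\,h_{\mu\mu}(\sin^2\theta/F-u)$ produces an exact cancellation of both the $h_{\mu\mu}\cos\theta\sin\theta/F$ and $h_{\mu\mu}\cot\theta\,u$ terms, leaving $\nabla_\mu F=0$, which is \eqref{s3:bdry-F}. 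For \eqref{s3:bdry-H} I would diagonalize $h_{\alpha\beta}$ on $\partial\Sigma_t$ so that both $h$ and $\dot F$ are simultaneously diagonal in the frame $\{\mu,e_\alpha\}$. Proposition 2.3(3)--(4) gives $\nabla_\mu h_{\alpha\alpha}=\cot\theta\,h_{\alpha\alpha}(h_{\mu\mu}-h_{\alpha\alpha})$, and the boundary identity $\nabla_\mu F=0$ forces $\dot F^{\mu\mu}\nabla_\mu h_{\mu\mu}=-\cot\theta\sum_\alpha\dot F^{\alpha\alpha}h_{\alpha\alpha}(h_{\mu\mu}-h_{\alpha\alpha})$. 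Adding this to $\sum_\alpha\nabla_\mu h_{\alpha\alpha}$ gives
\begin{equation*}
\nabla_\mu H=\cot\theta\sum_\alpha\frac{(\dot F^{\mu\mu}-\dot F^{\alpha\alpha})(h_{\mu\mu}-h_{\alpha\alpha})}{\dot F^{\mu\mu}}\,h_{\alpha\alpha}\le 0,
\end{equation*}
where nonpositivity follows from strict convexity ($h_{\alpha\alpha},\dot F^{\mu\mu}>0$), the concavity of $F=H_\ell/H_{\ell-1}$ on $\Gamma_+$ via Lemma 2.3(ii) which forces $(\dot F^{\mu\mu}-\dot F^{\alpha\alpha})(h_{\mu\mu}-h_{\alpha\alpha})\le 0$, and $\cot\theta\ge 0$ for $\theta\in(0,\pi/2]$.

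The main obstacle is not any single deep estimate but the book-keeping: one has to group the $\cos\theta$-weighted contributions of $\phi$, the $\ddot F$-terms from twice differentiating $1/F$, and the tangential transport correction $x-(\cos\theta/F)e$ so that the right-hand side of \eqref{s3:evol-2nd-fundamental-form} matches exactly the stated form. Conceptually the most delicate point is the clean boundary identity $\nabla_\mu F=0$, whose derivation depends crucially on the specific values $\phi|_{\partial\Sigma_t}=\sin^2\theta$ and $\langle\mu,e\rangle=\sin\theta$ that make the cancellation in \eqref{s3:DF} work; if $\mathcal{F}$ were perturbed by any term not enjoying this structure, the boundary condition for $F$ would no longer be a homogeneous Neumann one.
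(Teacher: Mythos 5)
Your proposal is correct and follows essentially the same route as the paper: specialize the general evolution equations from Proposition~\ref{s3:prop-evol-general-flow} to the speed $\mathcal F=\phi/F-u$, expand $\nabla_i\nabla_j\mathcal F$ via the second-derivative identities \eqref{s3.d2nue}--\eqref{s3.d2u}, and derive the boundary conditions from \eqref{s3:DF} together with Proposition~\ref{s2:prop-second-fundamental-form} and the concavity of $F$. The only step left implicit in your outline that deserves explicit mention is Simons' identity, which is what converts $\dot F^{k\ell}\nabla_i\nabla_j h_{k\ell}$ into $\dot F^{k\ell}\nabla_k\nabla_\ell h_{ij}$ plus zeroth-order curvature terms and hence produces the parabolic operator in the stated form.
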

\begin{proof}
	(i) The  equation \eqref{s3:evol-metric} follows from substituting \eqref{s3.F} into \eqref{s2:metric-general-f}.
	
	(ii) By the evolution equation \eqref{s2:hij-general-f}, we need to calculate
\begin{align*}
 -\nabla_i \nabla_j\mathcal{F}=& -\nabla_i \nabla_j\(\frac{1+\cos\t\langle \nu,e\rangle}{F}-\langle x,\nu\rangle\) \\
  = & \frac{1+\cos\t\langle \nu,e\rangle}{F^2}\nabla_i \nabla_jF-2\frac{1+\cos\t\langle \nu,e\rangle}{F^3}\nabla_iF \nabla_jF\\
  &+\frac{\cos\theta}{F^2}\left(\nabla_i\langle\nu,e\rangle \nabla_jF+\nabla_j\langle\nu,e\rangle \nabla_iF\right)\\
  &-\frac{\cos\theta}{F}\nabla_i \nabla_j\langle\nu,e\rangle+\nabla_i \nabla_j\langle x,\nu\rangle.
\end{align*}
Using the Weingarten and Codazzi equations, we have
\begin{align}
  \nabla_i\langle \nu,e\rangle= & h_i^k\langle e_k,e\rangle, \label{s3.dnue}\\
  \nabla_i\langle x,\nu\rangle =&\langle x,\nabla_i\nu\rangle=\langle x,e_k\rangle h_i^k\label{s3.du}\\
   \nabla_i\nabla_j\langle \nu,e\rangle= & \langle \nabla h_{ij},e\rangle -(h^2)_{ij}\langle \nu,e\rangle,\label{s3.d2nue} \\
   \nabla_i\nabla_j\langle x,\nu\rangle= & h_{ij}+\langle x, \nabla h_{ij}\rangle -(h^2)_{ij}\langle x,\nu\rangle.\label{s3.d2u}
\end{align}
Using the Codazzi equations, the Ricci identities and the Gauss equations, we obtain the following Simons' identity (see \cite{CM2011})
	\begin{align*}
	\nabla_i\nabla_j h_{k\ell}=&\nabla_i\nabla_k h_{\ell j}
	=\nabla_k\nabla_i h_{\ell j}+R_{m\ell ki}h_{mj}+R_{mjki}h_{\ell m} \nonumber\\
	=&\nabla_k\nabla_\ell h_{ij}+(h_{mk}h_{\ell i}-h_{k\ell}h_{mi})h_{mj}+(h_{ij}h_{mk}-h_{jk}h_{mi})h_{\ell m},
	\end{align*}
where $R_{m\ell ki}$ denotes the Riemannian curvature tensor of the metric $g_{ij}$ on $\Sigma_t$. This implies that
	\begin{align*}
	\nabla_i\nabla_j F=&\dot{F}^{k\ell}\nabla_i\nabla_j h_{k\ell}+\ddot{F}^{k\ell,pq}\nabla_i h_{k\ell}\nabla_j h_{pq} \nonumber\\
	=&\dot{F}^{k\ell}\nabla_k\nabla_\ell h_{ij}-\dot{F}^{k\ell}h_{k\ell}(h^2)_{ij}+\dot{F}^{k\ell}(h^2)_{k\ell}h_{ij}+\ddot{F}^{k\ell,pq}\nabla_i h_{k\ell}\nabla_j h_{pq} \nonumber\\
	=&\dot{F}^{k\ell}\nabla_k \nabla_\ell h_{ij}-F(h^2)_{ij}+\dot{F}^{k\ell}(h^2)_{k\ell}h_{ij}+\ddot{F}^{k\ell,pq}\nabla_i h_{k\ell}\nabla_j h_{pq},
	\end{align*}
	where we used $F=\dot{F}^{k\ell}h_{k\ell}$ due to the 1-homogeneity of $F$. Then
\begin{align}\label{s3.D2F}
 -\nabla_i \nabla_j\mathcal{F}  = & \frac{1+\cos\t\langle \nu,e\rangle}{F^2}\left(\dot{F}^{k\ell}\nabla_k \nabla_\ell h_{ij}+\ddot{F}^{k\ell,pq}\nabla_i h_{k\ell}\nabla_j h_{pq}\right)\nonumber\\
 &-\frac{1+\cos\t\langle \nu,e\rangle}{F}(h^2)_{ij}+\left(1+\frac{1+\cos\t\langle \nu,e\rangle}{F^2}\right)\dot{F}^{k\ell}(h^2)_{k\ell}h_{ij}\nonumber\\
 &-2\frac{1+\cos\t\langle \nu,e\rangle}{F^3}\nabla_iF \nabla_jF\nonumber\\
 &+\frac{\cos\theta}{F^2}\left(h_i^k\langle e_k,e\rangle \nabla_jF+h_j^k\langle e_k,e\rangle \nabla_iF\right)\nonumber\\
  &+\langle x-\frac{\cos\theta}{F}e, \nabla h_{ij}\rangle +\left(\frac{\cos\theta}{F}\langle \nu,e\rangle- \langle x,\nu\rangle\right)(h^2)_{ij}.
\end{align}
Substituting \eqref{s3.D2F} and \eqref{s3.F} into \eqref{s2:hij-general-f} and rearranging the terms,  we obtain \eqref{s3:evol-2nd-fundamental-form}.
	
	(iii) The calculation for the evolution equation \eqref{s3:evol-F} of $F$ and the boundary condition \eqref{s3:bdry-F} is essentially the same as that in \cite[Proposition 4.3]{Wang-Weng-Xia2022} for $F=H_n/{H_{n-1}}$.  In fact, \eqref{s3:evol-F} can be obtained by using the two equations \eqref{s3:evol-metric} and \eqref{s3:evol-2nd-fundamental-form}, and \eqref{s3:bdry-F} follows from \eqref{s3:DF} and the calculation for $\nabla_\mu\langle x,\nu\rangle$ and $\nabla_\mu\langle \nu,e\rangle$.  So we omit the details.

	(iv) The calculation for the equation \eqref{s3:evol-mean-curvature} is similar as that in  \cite[Proposition 4.4]{Wang-Weng-Xia2022} for the case $F=H_n/{H_{n-1}}$. The only difference is that the fact $F^2=\dot{F}^{k\ell}(h^2)_{k\ell}$ is used, which is due to that $F=H_n/{H_{n-1}}$, while this identity is not true for general $F=H_\ell/H_{\ell-1}$. We include a proof here for the completeness.

Taking the trace of \eqref{s3.D2F}, we have
\begin{align}\label{s3.D2H}
 -\Delta \mathcal{F}  = & \frac{1+\cos\t\langle \nu,e\rangle}{F^2}\left(\dot{F}^{k\ell}\nabla_k \nabla_\ell H+\ddot{F}^{k\ell,pq}\nabla_i h_{k\ell}\nabla_i h_{pq}\right)\nonumber\\
 &-\frac{1+\cos\t\langle \nu,e\rangle}{F}|A|^2+\left(1+\frac{1+\cos\t\langle \nu,e\rangle}{F^2}\right)\dot{F}^{k\ell}(h^2)_{k\ell}H\nonumber\\
 &-2\frac{1+\cos\t\langle \nu,e\rangle}{F^3}|\nabla F|^2+\frac{2\cos\theta}{F^2}\langle \nabla\langle\nu,e\rangle, \nabla F\rangle \nonumber\\
  &+\langle x-\frac{\cos\theta}{F}e, \nabla H\rangle +\left(\frac{\cos\theta}{F}\langle \nu,e\rangle- \langle x,\nu\rangle\right)|A|^2.
\end{align}
Substituting \eqref{s3.D2H} and \eqref{s3.F} into \eqref{s2:H-general-f}, we obtain \eqref{s3:evol-mean-curvature}.

	To show the boundary condition \eqref{s3:bdry-H}, we choose an orthonormal frame $\{e_\alpha\}_{\alpha=1}^{n-1}$ of $T(\partial\Sigma_t)$ such that $\{\mu,e_1,\cdots,e_{n-1}\}$ forms an orthonormal frame of $T\Sigma_t$. By the equation \eqref{s3:bdry-F},
	\begin{align*}
	0=\nabla_\mu F=&\dot{F}^{\mu\mu}\nabla_\mu h_{\mu\mu}+\sum_{\a=1}^{n-1}\dot{F}^{\a\a}\nabla_\mu h_{\a\a}.
	\end{align*}
If $\Sigma_t$ is convex, then $\dot{F}^{\mu\mu}$ is positive and thus we have
\begin{align*}
\nabla_\mu H=&\sum_{\a=1}^{n-1}\nabla_\mu h_{\a\a}+\nabla_\mu h_{\mu\mu}\\
	=&-\sum_{\a=1}^{n-1}\frac{\dot{F}^{\a\a}}{\dot{F}^{\mu\mu}}\nabla_\mu h_{\a\a}+\sum_{\a=1}^{n-1} \nabla_\mu h_{\a\a}\\
	=&\sum_{\a=1}^{n-1} \frac{1}{\dot{F}^{\mu\mu}}(\dot{F}^{\mu\mu}-\dot{F}^{\a\a})\nabla_\mu h_{\a\a}.
\end{align*}
By Proposition \ref{s2:prop-second-fundamental-form},
$$
	\nabla_{\mu}h_{\a\a}=\cot\t {h}_{\a\a}(h_{\mu\mu}-h_{\a\a}), \quad \a=1,\cdots,n-1,
	$$
which implies that
	\begin{align*}
	\nabla_\mu H=&\sum_{\a=1}^{n-1} \frac{1}{\dot{F}^{\mu\mu}}(\dot{F}^{\mu\mu}-\dot{F}^{\a\a})(h_{\mu\mu}-h_{\a\a}){h}_{\a\a}\cot\theta.
	\end{align*}
Since $F$ is concave, there holds $(\dot{F}^{\mu\mu}-\dot{F}^{\a\a})(h_{\mu\mu}-h_{\a\a})\leq 0$. By the convexity of $\Sigma_t$ and the condition $\theta\in (0,\frac{\pi}2]$, we also have
${h}_{\a\a}\cot\theta\geq 0$. Therefore, we conclude that $\nabla_\mu H\leq 0$ on the boundary $\partial\Sigma_t$.

(v). Using \eqref{s2:normal-general-f} and \eqref{s3.du}, we have
\begin{align*}
  \partial_tu= & \langle \partial_tx,\nu\rangle+\langle x,\partial_t \nu\rangle\\
  = & \frac{1+\cos\theta\langle \nu,e\rangle}{F}-u-\frac{\cos\theta}{F}\langle x,e_i\rangle h_i^k\langle e_k,e\rangle+\frac{1+\cos\theta\langle \nu,e\rangle}{F^2}\nabla F\\
  &+\langle x,\nabla u\rangle +\langle x,e_i\rangle h_i^k\langle e_k,V\rangle\\
  =&\left\langle V+x-\frac{\cos\theta}{F}e,\nabla u\right\rangle+\frac{1+\cos\theta\langle \nu,e\rangle}{F}-u+\frac{1+\cos\theta\langle \nu,e\rangle}{F^2}\nabla F.
\end{align*}
On the other hand, the equation \eqref{s3.d2u} implies that
\begin{equation*}
  \dot{F}^{k\ell}\nabla_k\nabla_\ell u=F+\langle x,\nabla F\rangle -\dot{F}^{ij}(h^2)_{ij}u.
\end{equation*}
Combining the above two equations, we obtain \eqref{s3.evl-u}.

(vi). Similarly, by \eqref{s2:normal-general-f} and \eqref{s3.dnue}, we have
\begin{align*}
  \partial_t\langle \nu,e\rangle= & \langle \partial_t \nu,e\rangle \\
  = &-\frac{\cos\theta}{F}\langle h_i^ke_k,e\rangle\langle e_i,e\rangle + \frac{1+\cos\theta\langle \nu,e\rangle}{F^2}\langle\nabla F,e\rangle\\
  &+\nabla_i\langle x,\nu\rangle \langle e_i,e\rangle+\langle V,e_k\rangle h_i^k\langle e_i,e\rangle \\
  =&\left\langle V+x-\frac{\cos\theta}{F}e,\nabla \langle \nu,e\rangle\right\rangle+ \frac{1+\cos\theta\langle \nu,e\rangle}{F^2}\langle\nabla F,e\rangle.
\end{align*}
The equation \eqref{s3.d2nue} implies that
\begin{equation*}
  \dot{F}^{k\ell}\nabla_k\nabla_\ell \langle \nu,e\rangle=\langle \nabla F,e\rangle -\dot{F}^{ij}(h^2)_{ij}\langle \nu,e\rangle.
\end{equation*}
Combining the above two equations, we obtain \eqref{s3.evl-nue}.
\end{proof}

\section{Tensor maximum principle}\label{sec:4}
In this section, we prove the tensor maximum principle in Theorem \ref{s1:thm-max principle}.

Let $\Sigma$ be a smooth compact manifold with boundary $\partial \Sigma$ and $\mu$ be the outward pointing unit normal vector field of $\partial\Sigma$ in $\Sigma$.
Denote $T\Sigma$ the tangent bundle of $\Sigma$, that is, $T\Sigma=\{(p,\xi),~p\in \Sigma, ~\xi\in T_p\Sigma\}$. Let $S=(S_{ij})$ be a smooth symmetric tensor on $\Sigma$. We consider a function on the tangent bundle of $\Sigma$:
\begin{equation}\label{s4.Z}
Z(p,\xi)=S(p)(\xi,\xi)
\end{equation}
where $(p,\xi)\in T\Sigma$. Then $S_{ij}\geq 0$ is equivalent to an inequality  $Z(p,\xi)\geq 0$ for all $(p,\xi)\in T\Sigma$.  Let $p\in \Sigma$ be a point where $S(p)$ has a null vector $\xi$, i.e., $S(p)(\xi,\xi)=0$.  The key of Andrew's argument \cite[\S 3]{And2007} is that the minimality of $Z(p,\xi)$ implies that the Hessian of $Z$ is non-negative definite, which is obvious when $p$ is an interior point of the manifold. We prove that this is also true on the boundary when $S_{ij}$ satisfies a proper Neumann boundary condition.
\begin{lem}\label{s3:tech-lemma}
Suppose that $S_{ij}\geq 0$ on $\Sigma$ and
\begin{equation}\label{s4.lem1}
  (\nabla_\mu S_{ij})\xi^i\xi^j\geq 0,\qquad\mathrm{ on}~ \partial\Sigma
\end{equation}
whenever $S_{ij}\xi^j=0$ for some tangent vector $\xi$. If the function $Z$ defined in \eqref{s4.Z} attains its minimum $Z(p,\xi)=0$ at $(p,\xi)\in T\Sigma$, then $D Z|_{(p,\xi)}=0$ and $D^2Z|_{(p,\xi)}$ is non-negative definite.
\end{lem}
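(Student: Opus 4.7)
My plan is to reduce the Hessian statement on $T\Sigma$ to a scalar minimality argument on $\Sigma$ via a suitably chosen extension of $\xi$. Extend $\xi$ to a smooth vector field $X$ on a neighborhood of $p$ in $\Sigma$ with $X(p)=\xi$ and $(\nabla X)(p)=0$, for instance by parallel transporting $\xi$ along radial geodesics in normal coordinates centered at $p$. Set $\tilde Z(q):=S_{ij}(q)X^i(q)X^j(q)$; since $S\geq 0$ globally, $\tilde Z\geq 0$ on $\Sigma$ with equality at $q=p$, so $p$ is a minimum point of $\tilde Z$ on the manifold with boundary $\Sigma$.

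The vanishing $DZ|_{(p,\xi)}=0$ splits into a vertical (fiber) component and a horizontal (base) component. The vertical component is $\partial Z/\partial \eta^i|_{(p,\xi)}=2S_{ij}(p)\xi^j$, which vanishes because $S(p)\geq 0$ and $S(p)(\xi,\xi)=0$ force $\xi$ to be a null vector, i.e.\ $S(p)\xi=0$. For the horizontal component, using $(\nabla X)(p)=0$ and $X(p)=\xi$ the Leibniz rule gives $T(\tilde Z)(p)=(\nabla_T S)(\xi,\xi)$ for any $T\in T_p\Sigma$. If $p$ is interior, every tangent direction is two-sided, so minimality of $\tilde Z$ at $p$ yields $(\nabla_T S)(\xi,\xi)=0$ for all $T$. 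If $p\in\partial\Sigma$, two-sided tangential variations inside $\partial\Sigma$ already give $(\nabla_T S)(\xi,\xi)=0$ for $T\in T_p\partial\Sigma$, while one-sided minimality in the inward direction $-\mu$ gives $\mu(\tilde Z)(p)\leq 0$, i.e.\ $(\nabla_\mu S)(\xi,\xi)\leq 0$. This is exactly where the Neumann hypothesis \eqref{s4.lem1} enters: combined with $(\nabla_\mu S)(\xi,\xi)\geq 0$, the pinching forces $(\nabla_\mu S)(\xi,\xi)=0$, so that $(\nabla_T S)(\xi,\xi)=0$ holds for every $T\in T_p\Sigma$.

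With $DZ|_{(p,\xi)}=0$ in hand the Hessian $D^2Z|_{(p,\xi)}$ is well-defined independently of the connection, and the remaining step is a standard one-sided Taylor argument. For any $V\in T_{(p,\xi)}(T\Sigma)$ choose a smooth curve $(\gamma(s),\eta(s))$ in $T\Sigma$ with initial velocity $V$; the curve is two-sided unless its base projection points strictly out of $\Sigma$ at a boundary $p$, in which case it is one-sided on $[0,\varepsilon)$ or $(-\varepsilon,0]$. Setting $f(s):=Z(\gamma(s),\eta(s))\geq 0$, we have $f(0)=0$ and $f'(0)=DZ|_{(p,\xi)}\cdot V=0$, so the expansion $f(s)=\tfrac12 D^2Z|_{(p,\xi)}(V,V)\,s^2+O(s^3)$ together with $f\geq 0$ forces $D^2Z|_{(p,\xi)}(V,V)\geq 0$ regardless of sidedness. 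The principal obstacle is the first-order argument in the normal direction at a boundary null vector: ordinary minimality alone only controls the sign of $(\nabla_\mu S)(\xi,\xi)$, and it is precisely the Neumann hypothesis that supplies the matching reverse inequality needed to force equality; once that is secured, both the gradient and Hessian conclusions follow by essentially the same argument as in the interior case.
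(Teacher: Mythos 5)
Your argument is correct and follows essentially the same strategy as the paper's: both prove $DZ|_{(p,\xi)}\cdot(\mu,0)=0$ by pinching (one-sided minimality gives $(\nabla_\mu S)(\xi,\xi)\le 0$, the Neumann hypothesis gives $\ge 0$), and both then get the Hessian inequality from a second-order one-sided variational argument, the paper phrasing it as a contradiction along a curve in $UT(\Sigma)$ while you phrase it as a one-sided Taylor expansion of $Z$ along a curve in $T\Sigma$. The only presentational difference is that you package the horizontal derivative computation via a parallel extension $X$ of $\xi$ and the scalar $\tilde Z=S(X,X)$ on $\Sigma$, whereas the paper works in local coordinates on $T\Sigma$ directly; the content is the same.
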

\begin{proof}
We choose coordinates $x^1,\cdots,x^n$ for $\Sigma$ near $p$ such that the connection coefficients $\nabla$ vanish at $p$. Then any vector in $T_q{\Sigma}$ for $q$ near $p$ has the form $\sum_{i=1}^n{\dot{x}^i \partial_i}$, so $T\Sigma$ is described locally by the $2n$ coordinates $x^1,\cdots,x^n$ and $\dot{x}^1,\cdots,\dot{x}^n$. We can rotate the coordinates such that $\xi=\partial_1$. In particular, we have $Z(p,\xi)=0$ and $Z(q,\eta)\geq 0$ for all $q\in\Sigma$ and $\eta\in T_q\Sigma$.

If $p\in\operatorname{int}(\Sigma)$, then the conclusion follows easily. So in the following, we assume that $p\in\partial\Sigma$. For any $v\in T_p \partial\Sigma$, we have
   \begin{equation*}
	0=DZ|_{(p,\xi)}\cdot \left(v,0\right),
	\end{equation*}
	We need to show that $DZ|_{(p,\xi)}\cdot \left(\mu,0\right)=0$ as well, where $\mu\in T_p\Sigma$ is the unit outward normal of $\partial\Sigma$ in $\Sigma$. Since $Z(p,\xi)=S_{ij}\xi^i\xi^j=0$,  by the assumption \eqref{s4.lem1}, we have
	\begin{equation*}
	0\leq \left(\nabla_\mu S_{ij}\right)\xi^i\xi^j=DZ|_{(p,\xi)}\cdot \left(\mu,0\right).
	\end{equation*}
If $DZ|_{(p,\xi)}\cdot \left(\mu,0\right)>0$, we consider the geodesic $\gamma:[0,\delta]\rightarrow \Sigma$ with $\gamma(0)=p$ and $\gamma'(0)=-\mu$. Extend $\xi\in T_p\Sigma$ to a vector field $\xi(s)\in T_{\gamma(s)}\Sigma$ along the geodesic $\gamma$ by parallel transport, that is, $\xi'(s)=\nabla_{\gamma'(s)}\xi(s)=0$ and $\xi(0)=\xi$.  Along the geodesic $\gamma(s)$, we have
\begin{align*}
  \frac{d}{ds}\Big |_{s=0}Z(\gamma(s),\xi(s))=&DZ|_{(p,\xi)}\cdot (\gamma'(0),\xi'(0))\\
  =&-DZ|_{(p,\xi)}\cdot \left(\mu,0\right)<0
\end{align*}
 and hence there exists an interior point $\gamma(s_0)\in\text{int}(\Sigma), 0<s_0<\delta$ such that $Z(\gamma(s_0),\xi(s_0))<Z(\gamma(0),\xi(0))=Z(p,\xi)=0$, which contradicts with $Z\geq 0$. Therefore,
 \begin{equation*}
   DZ|_{(p,\xi)}\cdot (v,0)=0,\quad \forall~v\in T_p\Sigma.
 \end{equation*}
 Since $\xi\in T_p\Sigma$ is an interior point, we also have
  \begin{equation*}
   DZ|_{(p,\xi)}\cdot (0,\eta)=0,\quad \forall~\eta\in T_\xi(T_p\Sigma).
 \end{equation*}
 This means that $DZ|_{(p,\xi)}=0$. In local coordinates, we write this as
	\begin{equation*}
	0=\frac{\partial Z}{\partial x^k}=\frac{\partial}{\partial x^k}S_{11}
	\end{equation*}
and
	\begin{equation*}
	0=\frac{\partial Z}{\partial \dot{x}^k}=2S_{k1}
	\end{equation*}
	for all $k=1,\cdots,n$.

Next, we show that $D^2Z$ is non-negative definite, i.e., for any $a=(a_1,\cdots,a_n)$, $b=(b_1,\cdots,b_n)\in\mathbb{R}^n$, at the point $(p,\xi)$ we have
	\begin{equation*}
	D^2Z|_{(p,\xi)}\left((a,b),(a,b)\right)\geq 0.
	\end{equation*}
	Suppose not, then there exists $(a,b)$ such that
	\begin{equation*}
	D^2Z|_{(p,\xi)}\left((a,b),(a,b)\right)<0.
	\end{equation*}
	Note that for any fixed two vectors $\tilde{a},\tilde{b}\in\mathbb{R}^{n}$, we have
	\begin{align*}
	(D^2Z)|_{(p,\xi)}((0,\xi),(\tilde{a},\tilde{b}))=&2\nabla_{\tilde{a}}S_{11}+2\sum_{k=1}^{n}S_{1k}\tilde{b}_k\\
=&DZ|_{(p,\xi)}\cdot \left(2\tilde{a},\tilde{b}\right)=0.
	\end{align*}
	Hence there exits a constant $k\in\mathbb{R}$ such that $b-k\xi\in T_{\xi}\mathbb{S}^{n-1}$ and
	\begin{equation*}
	D^2Z|_{(p,\xi)}\left((a,b-k\xi),(a,b-k\xi))\right)=D^2Z|_{(p,\xi)}\left((a,b),(a,b)\right)<0
	\end{equation*}
	at the point $(p,\xi)$. Without loss of generality, we assume that $\langle a,\mu\rangle<0$, i.e., $a\in T_p\Sigma$ is pointing inward of $\Sigma$. Denote $UT(\Sigma)$ the unit tangent bundle of $\Sigma$, that is, $UT(\Sigma)=\{(p,\xi)\in T\Sigma, ~|\xi|=1\}$. Let $\tilde{\gamma}$ be a curve in $UT(\Sigma)$  such that $\tilde{\gamma}(s)=(p(s),\xi(s))$ with $\tilde{\gamma}(0)=(p,\xi),\ \tilde{\gamma}'(0)=(a,b-k\xi)$. Then we have
	\begin{equation*}
	\frac{d}{ds}\Big|_{s=0}Z(\tilde{\gamma}(s))=DZ|_{(p,\xi)}\cdot(a,b-k\xi)=0,
	\end{equation*}
	and
	\begin{align*} \frac{d^2}{ds^2}\Big|_{s=0}Z(\tilde{\gamma}(s))=&D^2Z|_{(p,\xi)}(\tilde{\gamma}'(0),\tilde{\gamma}'(0))+DZ|_{(p,\xi)}\cdot\tilde{\gamma}''(0)\\
	=&D^2Z|_{(p,\xi)}\left((a,b-k\xi),(a,b-k\xi))\right)<0,
	\end{align*}
	which contradicts with the minimality of $Z(\tilde{\gamma}(s))$ at $s=0$.
\end{proof}

By Lemma \ref{s3:tech-lemma}, we can use the argument in \cite[\S 3]{And2007} to prove Theorem \ref{s1:thm-max principle}.
\begin{proof}[Proof of Theorem \ref{s1:thm-max principle}]
Recall that the coordinate is chosen such that $\partial_1=\xi$. Consider the second order conditions implied by the minimality of $Z$, the entries in the Hessian matrix $D^2Z$ are as follows:
	\begin{align*}
	\frac{\partial^2 Z}{\partial x^k \partial x^\ell}&=\frac{\partial^2}{\partial x^k \partial x^\ell} S_{11},\\
	\frac{\partial^2 Z}{\partial x^k \partial \dot{x}^q}&=2\frac{\partial}{\partial x^k} S_{1q},\\
	\frac{\partial^2 Z}{\partial \dot{x}^p \partial \dot{x}^q}&=2 S_{pq},
	\end{align*}
where $1\leq k,\ell,p,q\leq n$.	Note that in the coordinates chosen above, $\nabla_k\nabla_\ell S_{11}=\frac{\partial^2S_{11}}{\partial x^k \partial x^\ell}$ at $p$. Since $S_{ij}$ satisfies the boundary condition \eqref{eq-bou} whenever $S_{ij}\geq 0$ and $S_{ij}\xi^j=0$, by Lemma \ref{s3:tech-lemma} we have $D^2Z\geq 0$ at $(p,\xi)$. Then for any matrix $\Gamma=(\Gamma_k^p)_{n\times n}$, we have:
	\begin{align*}
	0&\leq a^{k\ell}\(\frac{\partial^2 Z}{\partial x^k \partial x^\ell}-\Gamma_k^p\frac{\partial^2 Z}{\partial x^\ell \partial \dot{x}^p}-\Gamma_\ell^q \frac{\partial^2 Z}{\partial x^k \partial \dot{x}^q}+\Gamma_k^p\Gamma_\ell^q\frac{\partial^2 Z}{\partial \dot{x}^p \partial \dot{x}^q}\)\\
	&=a^{k\ell}\(\frac{\partial^2 S_{11}}{\partial x^k \partial x^\ell}-4\Gamma_k^p\frac{\partial S_{1p}}{\partial x^\ell}+2\Gamma_k^p\Gamma_\ell^q S_{pq}\).
	\end{align*}
	It follows that at the minimum point $(p,\xi)$, the function $Z$ satisfies
	\begin{align*}
	\frac{\partial Z}{\partial t}=&a^{k\ell}\nabla_k\nabla_\ell S_{ij}\dot{x}^i\dot{x}^j+b^k\nabla_kS_{ij}\dot{x}^i\dot{x}^j+N_{ij}\dot{x}^i\dot{x}^j\\
	=&a^{k\ell}\frac{\partial^2S_{11}}{\partial x^k \partial x^\ell}+N_{11}\\
	\geq&\sup_{\Gamma}2a^{k\ell}\left(2\Gamma_k^p\nabla_\ell S_{1p}-\Gamma_k^p\Gamma_\ell^qS_{pq}\right)+N_{11}\\
	\geq&0,
	\end{align*}
by the condition \eqref{conditon-MP}.	By the maximum principle (see the argument in Lemma 3.1 and Lemma 3.5 of \cite{Hamilton}), we conclude that $Z\geq 0$ is preserved.
\end{proof}

\section{Preserving of convexity}\label{sec:5}

Since the initial hypersurface $\Sigma_0$ is strictly convex, we can choose a suitable point $o$ on $\widehat{\partial\Sigma}$ as the origin of $\mathbb{R}^{n+1}$ such that $\Sigma_0$ is star-shaped with respect to $o$. Moreover, there exists two constants $0<r_1<r_2<\infty$ such that
\begin{equation*}
  \Sigma_0 \subset \widehat{C_{r_2,\t}} \backslash \widehat{C_{r_1,\t}},
\end{equation*}
where $\widehat{C_{r_i,\t}}, i=1,2$ are spherical caps of radius $r_i$. As the spherical caps are stationary along the flow \eqref{s1:BGL-flow}, the avoidance principle implies that the solution $\Sigma_t$ of the flow \eqref{s2:BGL-flow} satisfies
\begin{align}\label{s4:barrier-estimate}
	\Sigma_t \subset \widehat{C_{r_2,\t}} \backslash \widehat{C_{r_1,\t}},
	\end{align}
for all time  $t\in [0,T)$. This gives the $C^0$ estimate of $\Sigma_t$.

The star-shapedness of $\Sigma_t$ is preserved: this follows from applying maximum principle to a modified support function introduced in \cite[\S 2]{WWX23}:
\begin{equation*}
  \bar{u}=\frac{u}{1+\cos\theta\langle \nu,e\rangle}.
\end{equation*}
Combining \eqref{s3.evl-u} and \eqref{s3.evl-nue}, $\bar{u}$ satisfies the evolution equation
\begin{align}\label{s5.evl-ubar}
  \partial_t\bar{u}= & \frac{1+\cos\t\langle \nu,e\rangle}{F^2}\dot{F}^{k\ell}\nabla_k \nabla_\ell \bar{u} +\left\langle V+x-\frac{\cos\t}{F}e,\nabla \bar{u}\right\rangle\nonumber\\
  & +\bar{u}\left(\frac{\dot{F}^{ij}(h^2)_{ij}}{F^2}-1\right)+\frac{2\cos\theta}{F^2}\dot{F}^{ij}\nabla_i\langle\nu,e\rangle\nabla_j\bar{u}.
\end{align}
On the boundary $\partial\Sigma_t$, using \eqref{s2:relation-normal} we have
\begin{align*}
  \nabla_\mu u= & \langle \mu,\nu\rangle +\langle x, h(\mu,\mu)\mu\rangle  \\
  =& h(\mu,\mu) \cos\theta\langle x, \bar{\nu}\rangle =  h(\mu,\mu)\cot\theta u,\\
  \nabla_\mu\langle\nu,e\rangle=&\langle h(\mu,\mu)\mu,e\rangle \\
  =&h(\mu,\mu)\langle \sin\theta e,e\rangle=-h(\mu,\mu)\tan\theta\langle \nu,e\rangle.
\end{align*}
These imply that
\begin{align}\label{s5.dubar}
  \nabla_\mu\bar{u} =&  \frac{uh(\mu,\mu)}{(1+\cos\t\langle \nu,e\rangle)^2}\left(\cot\theta+\frac{\langle \nu,e\rangle}{\sin\theta}\right) =0,\qquad \text{on}\quad \partial\Sigma_t.
\end{align}
Applying maximum principle to \eqref{s5.evl-ubar} and using the boundary condition \eqref{s5.dubar}, we have that $\bar{u}\geq \min_{\Sigma_0}\bar{u}>0$ on $\Sigma_t$ and then $u\geq (1-\cos\theta)\bar{u}>0$. That is, $\Sigma_t$ is star-shaped.

By considering  $\bar{F}=\bar{u}F$, a positive lower bound on $F$ is proved in \cite[Prop. 2.6]{WWX23}. We include the proof here  for the convenience of readers.
\begin{prop}\label{s5.Flbd}
Assume that the initial hypersurface $\Sigma_0$ is star-shaped and satisfies $F>0$. Let $\Sigma_t, t\in [0,T)$, be the smooth solution to the flow \eqref{s2:BGL-flow} starting from $\Sigma_0$. Then there exists a positive constant $C$ such that $F\geq C>0$ on $\Sigma_t$ for all $t\in[0,T)$.
\end{prop}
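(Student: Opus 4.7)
The plan is to study the auxiliary quantity $\bar{F} := \bar{u} F$ and show that its spatial minimum is non-decreasing in time. Combined with the upper bound on $\bar{u}$ inherited from the $C^{0}$ estimate \eqref{s4:barrier-estimate} and the positive lower bound on $1+\cos\theta\langle \nu,e\rangle$, this will yield a uniform positive lower bound on $F$.

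I would first compute $\partial_t \bar{F} = \bar{u}\,\partial_t F + F\,\partial_t \bar{u}$ from the evolution equations \eqref{s3:evol-F} and \eqref{s5.evl-ubar}. Introducing $a^{k\ell} := \frac{1+\cos\theta\langle \nu,e\rangle}{F^2}\dot{F}^{k\ell}$, the identity
\[
a^{k\ell}\nabla_k\nabla_\ell \bar{F} = \bar{u}\,a^{k\ell}\nabla_k\nabla_\ell F + F\,a^{k\ell}\nabla_k\nabla_\ell \bar{u} + 2\,a^{k\ell}\nabla_k \bar{u}\,\nabla_\ell F
\]
rewrites the combined second-order piece in divergence form in $\bar{F}$, leaving an extra coupling $-2 a^{k\ell}\nabla_k\bar u\,\nabla_\ell F$. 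Next, the zeroth-order contributions $\bar{u}F - \bar u\dot{F}^{ij}(h^2)_{ij}/F$ from $\bar u\,\partial_t F$ and $\bar u \dot{F}^{ij}(h^2)_{ij}/F - \bar u F$ from $F\,\partial_t \bar u$ cancel exactly. Using $\bar u \nabla F + F\nabla \bar u = \nabla \bar F$ to group the drift term $\langle Y,\cdot\rangle$ with $Y = V + x - \tfrac{\cos\theta}{F}e$, the computation gives
\[
\partial_t \bar{F} = a^{k\ell}\nabla_k\nabla_\ell \bar{F} + \langle Y,\nabla \bar{F}\rangle + G,
\]
where $G$ collects four remaining terms, bilinear or quadratic in $\nabla F$ and $\nabla \bar u$.

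The key step is to verify that $G \equiv 0$ at any spatial critical point of $\bar{F}$. At such a point $\nabla F = -(F/\bar u)\nabla \bar u$. The Andrews-type quadratic $-\tfrac{2\bar u}{F}a^{k\ell}\nabla_k F\nabla_\ell F$ inherited from \eqref{s3:evol-F} then becomes $-\tfrac{2F}{\bar u}a^{k\ell}\nabla_k\bar u\,\nabla_\ell \bar u$, while the coupling $-2 a^{k\ell}\nabla_k\bar u\,\nabla_\ell F$ becomes $+\tfrac{2F}{\bar u}a^{k\ell}\nabla_k\bar u\,\nabla_\ell \bar u$, so these two annihilate each other. Similarly, using the Weingarten identity $\nabla_i\langle \nu,e\rangle = h_i^k\langle e_k,e\rangle$, the two $\cos\theta$-contributions $\bar u\cdot \tfrac{2\cos\theta}{F^{2}}\dot{F}^{k\ell}\nabla_k F\, h_\ell^j\langle e_j,e\rangle$ and $F\cdot \tfrac{2\cos\theta}{F^{2}}\dot{F}^{ij}\nabla_i\langle \nu,e\rangle \nabla_j \bar u$ become, after the substitution, $-\tfrac{2\cos\theta}{F}\dot{F}^{ij}h_i^p\langle e_p,e\rangle \nabla_j \bar u$ and $+\tfrac{2\cos\theta}{F}\dot{F}^{ij}h_i^p\langle e_p,e\rangle \nabla_j\bar u$, and cancel by symmetry of $\dot{F}^{ij}$. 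Hence $\partial_t \bar F \geq a^{k\ell}\nabla_k\nabla_\ell \bar F + \langle Y,\nabla \bar F\rangle$ at every interior minimum. On the boundary, \eqref{s3:bdry-F} together with \eqref{s5.dubar} gives $\nabla_\mu \bar F = \bar u\nabla_\mu F + F\nabla_\mu \bar u = 0$, so the parabolic minimum principle for Neumann boundary conditions yields $\min_{\Sigma_t}\bar F \geq \min_{\Sigma_0}\bar F > 0$. Finally, \eqref{s4:barrier-estimate} bounds $u$ from above, and $\theta \in (0,\pi/2]$ with $\langle \nu,e\rangle \geq -1$ ensures $1+\cos\theta\langle \nu,e\rangle \geq 1-\cos\theta > 0$, so $\bar u$ is uniformly bounded above; dividing gives $F \geq C > 0$.

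The main technical obstacle is the algebraic cancellation in $G$: four distinct first-order expressions (the two quadratic-in-$\nabla F$ terms and the two capillary $\cos\theta$-terms) must conspire to vanish identically under $\nabla F = -(F/\bar u)\nabla \bar u$. This cancellation succeeds only because the weight $1/(1+\cos\theta\langle \nu,e\rangle)$ in the definition of $\bar u$ is tuned exactly to synchronise the gradient information of $F$ and $\bar u$; this is precisely the motivation for the modified support function of \cite{WWX23}.
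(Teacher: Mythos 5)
Your proposal is correct and takes essentially the same route as the paper: both study $\bar F = \bar u F$, show that its spatial minimum is nondecreasing via the maximum principle with the Neumann boundary condition $\nabla_\mu\bar F=0$, and then divide by the uniform upper bound on $\bar u$. The only cosmetic difference is organizational: the paper performs the substitution $\nabla F=\tfrac{1}{\bar u}(\nabla\bar F - F\nabla\bar u)$ directly in the first-order terms so that the PDE \eqref{s5.evl-Fbar} has all gradient terms written in $\nabla\bar F$, whereas you leave a residual $G$ and verify that it vanishes at a critical point of $\bar F$ --- the same cancellations, checked pointwise at the minimum rather than identically.
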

\proof
As in \cite[Prop. 2.6]{WWX23}, we consider $\bar{F}=\bar{u}F$. The equations \eqref{s3:evol-F} and \eqref{s5.evl-ubar} imply that
\begin{align}\label{s5.evl-Fbar}
 \partial_t\bar{F}= & \frac{1+\cos\t\langle \nu,e\rangle}{F^2}\dot{F}^{k\ell}\nabla_k \nabla_\ell \bar{F} +\left\langle V+x-\frac{\cos\t}{F}e,\nabla \bar{F}\right\rangle\nonumber\\
   &+\frac{2\cos\theta}{F^2}\dot{F}^{ij}\nabla_i \langle \nu,e\rangle\nabla_j\bar{F}-\frac{2(1+\cos\t\langle \nu,e\rangle)}{F^3}\dot{F}^{ij}\nabla_iF\nabla_j\bar{F}.
\end{align}
On the boundary $\partial\Sigma_t$, by \eqref{s3:bdry-F} and \eqref{s5.dubar} we also have
\begin{align}\label{s5.dFbar}
  \nabla_\mu\bar{F} =&  0,\qquad \text{on}\quad \partial\Sigma_t.
\end{align}
The maximum principle implies that
\begin{equation*}
  \bar{u}F=\bar{F}\geq \min_{\Sigma_0}\bar{u}F>0.
\end{equation*}
Since $\bar{u}$ is bounded from above uniformly:
\begin{equation*}
  \bar{u}\leq \frac{|x|}{1-\cos\theta},
\end{equation*}
we conclude that $F\geq C>0$ on $\Sigma_t$ for a uniform positive constant $C$.
\endproof

Now we apply the tensor maximum principle in Theorem \ref{s1:thm-max principle} to prove that the flow \eqref{s2:BGL-flow} preserves the strict convexity of $\Sigma_t$.
\begin{thm}\label{s4:thm-preserving-strict-convex}
Assume that the initial hypersurface $\Sigma_0$ is strictly convex. Let $\Sigma_t, t\in [0,T)$, be the smooth solution to the flow \eqref{s2:BGL-flow} starting from $\Sigma_0$. Then there exists a constant $\varepsilon=\varepsilon(\Sigma_0)$ such that $h_{ij}\geq \varepsilon Fg_{ij}$ is preserved for positive times $t\in[0,T)$.
\end{thm}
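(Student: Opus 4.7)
The plan is to apply the generalized tensor maximum principle (Theorem \ref{s1:thm-max principle}) to the symmetric tensor $S_{ij} := h_{ij} - \varepsilon F g_{ij}$, where $\varepsilon > 0$ is chosen small enough that $S_{ij} > 0$ on $\Sigma_0$; this is possible since $\Sigma_0$ is strictly convex, $M$ is compact, and $F$ is bounded above on $\Sigma_0$. Once $S_{ij}\geq 0$ is propagated to $[0,T)$, the desired inequality $h_{ij}\geq \varepsilon F g_{ij}$ follows.

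First I would derive the evolution equation of $S_{ij}$ by combining \eqref{s3:evol-2nd-fundamental-form}, \eqref{s3:evol-metric}, and \eqref{s3:evol-F}; the result has the form $\partial_t S_{ij}=a^{k\ell}\nabla_k\nabla_\ell S_{ij}+b^k\nabla_k S_{ij}+N_{ij}$, with $a^{k\ell}=\tfrac{1+\cos\theta\langle\nu,e\rangle}{F^2}\dot{F}^{k\ell}$ positive definite by the convexity of $\Sigma_t$, $b^k=V^k+x^k-\tfrac{\cos\theta}{F}e^k$, and $N_{ij}$ collecting lower-order curvature terms. I would then verify the two hypotheses of Theorem \ref{s1:thm-max principle} separately.

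For the boundary condition \eqref{eq-bou}, on $\partial\Sigma_t$ we have $\nabla_\mu F=0$ from \eqref{s3:bdry-F} and $h_{\mu\alpha}=0$ from Proposition \ref{s2:prop-second-fundamental-form}(1), so $S_{ij}$ and $\nabla_\mu S_{ij}=\nabla_\mu h_{ij}$ are block-diagonal in a principal frame adapted to $\partial\Sigma_t$. In this frame every null eigenvector decomposes along eigendirections of $h$. For a tangential null $\xi=e_\alpha$ with $h_{\alpha\alpha}=\varepsilon F$ the minimum eigenvalue, Proposition \ref{s2:prop-second-fundamental-form}(4) gives $\nabla_\mu h_{\alpha\alpha}=\cot\theta\, h_{\alpha\alpha}(h_{\mu\mu}-h_{\alpha\alpha})\geq 0$ because $\theta\in(0,\pi/2]$, $h_{\alpha\alpha}\geq 0$, and $h_{\mu\mu}\geq \varepsilon F$; for the normal null $\xi=\mu$ with $h_{\mu\mu}=\varepsilon F$ the minimum, I would solve the identity $0=\nabla_\mu F=\sum_i \dot{F}^{ii}\nabla_\mu h_{ii}$ for $\nabla_\mu h_{\mu\mu}$ and observe that every tangential $\nabla_\mu h_{\alpha\alpha}$ on the right-hand side is then $\leq 0$, giving $\nabla_\mu h_{\mu\mu}\geq 0$. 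The degenerate mixed case, in which $h_{\mu\mu}$ and some $h_{\alpha\alpha}$ both equal $\varepsilon F$, reduces to the same inequalities since in the block-diagonal frame each diagonal entry of $\nabla_\mu h$ restricted to the null eigenspace has the correct sign.

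The main obstacle is the interior null eigenvector condition \eqref{conditon-MP}. At an interior point where $S_{ij}\xi^j=0$, $\xi$ is a principal direction of $h$ with minimum eigenvalue $\varepsilon F$. The concavity of $F$ controls the $\ddot{F}^{k\ell,pq}\nabla_i h_{k\ell}\nabla_j h_{pq}$ contribution via \eqref{s2:F-ddt}, but the $-2(1+\cos\theta\langle\nu,e\rangle)F^{-3}\nabla_i F\,\nabla_j F$ term in $N_{ij}$ has the wrong sign, and the cross term $\tfrac{\cos\theta}{F^2}h_i^k\langle e_k,e\rangle\nabla_j F$ cannot be absorbed by concavity alone. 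Here Andrews' $\Gamma$-trick built into \eqref{conditon-MP} is essential: one chooses $\Gamma^p_k$ so that the terms $2a^{k\ell}\Gamma^p_k\nabla_\ell S_{ip}\xi^i$ cancel the problematic gradient contributions in $N_{ij}\xi^i\xi^j$, and the remaining quadratic form in $\nabla F$ is then controlled by the inverse-concavity inequality \eqref{s2.d2F} of $F$. Combined with Lemma \ref{s2.lem3}(iii), the positive lower bound on $F$ from Proposition \ref{s5.Flbd}, and the fact that $\xi$ lies in the null space of $S_{ij}$, this should reduce \eqref{conditon-MP} to a sign-definite algebraic inequality at $\xi$. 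I expect the precise choice of $\Gamma$ and the application of \eqref{s2.d2F} to complete the square to be the most delicate step of the proof.
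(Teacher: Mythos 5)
Your overall strategy matches the paper's exactly: take $S_{ij}=h_{ij}-\varepsilon F g_{ij}$, apply Theorem \ref{s1:thm-max principle}, verify the boundary condition via Proposition \ref{s2:prop-second-fundamental-form} together with $\nabla_\mu F=0$, and handle the interior null-eigenvector condition using Andrews' $\Gamma$-trick and inverse-concavity. Your boundary analysis is correct and complete (though the ``degenerate mixed case'' is vacuous, since Proposition \ref{s2:prop-second-fundamental-form}(1) makes $\mu$ a principal direction, so null vectors on $\partial\Sigma_t$ are either $\mu$ or tangential).

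However, the interior step is not actually carried out, and your description of the mechanism is imprecise in a way that hides the real difficulty. The $\Gamma$-trick does not ``cancel the problematic gradient contributions'' in $N_{ij}$; choosing the optimal $\Gamma_k^\ell=\nabla_1 h_{k\ell}/S_{\ell\ell}$ instead produces the positive quadratic $\sum_k\dot f^k\sum_{\ell>1}|\nabla_1 h_{k\ell}|^2/(\kappa_\ell-\kappa_1)$, which must be played off against the negative second-derivative contributions from concavity and the sign-indefinite linear-in-gradient terms involving $\langle e,e_k\rangle$. Completing that argument requires, beyond \eqref{s2.d2F}: (a) the null-eigenvector identity $\nabla_k h_{11}=\varepsilon\nabla_k F$ to rewrite the radial gradient term as $\sum_k\frac{\dot f^k}{\kappa_1}(\nabla_k h_{11})^2$; (b) the pairwise inverse-concavity inequality \eqref{s2:inv-conc} to absorb the off-diagonal terms $(\nabla_1 h_{k\ell})^2$ for $k,\ell>1$; and (c) a Young's inequality applied to the two remaining mixed lines, whose discarded square produces a coefficient $-\frac{\varepsilon\cos^2\theta(1-\langle e,\nu\rangle^2)}{2F(1+\cos\theta\langle\nu,e\rangle)}$ that must be dominated by the zeroth-order term $\frac{\varepsilon(2+\cos\theta\langle\nu,e\rangle)}{F}\sum_{k>1}\dot f^k\kappa_k(\kappa_k-\kappa_1)$. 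The latter domination is exactly where the restriction $\theta\in(0,\pi/2]$ enters quantitatively, via the identity
\begin{equation*}
2(2+\cos\theta\langle\nu,e\rangle)(1+\cos\theta\langle\nu,e\rangle)-\cos^2\theta(1-\langle\nu,e\rangle^2)=\sin^2\theta+3(1+\cos\theta\langle\nu,e\rangle)^2>0.
\end{equation*}
None of this is verified in your write-up, so the most delicate step is genuinely open as written; you should carry out the estimate $Q_1\geq 0$ in full before the argument can be accepted.
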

\begin{proof}
Since the initial hypersurface $\Sigma_0$ is strictly convex, we have $F>0$ on $\Sigma_0$ and by the compactness there exists a positive constant $\varepsilon=\varepsilon(\Sigma_0)$ such that $h_{ij}\geq \varepsilon Fg_{ij}$ holds on $\Sigma_0$.	Let
\begin{equation*}
  S_{ij}=h_{ij}-\varepsilon Fg_{ij}.
\end{equation*}
We will apply the tensor maximum principle in Theorem \ref{s1:thm-max principle} to show that $S_{ij}\geq 0$ is preserved. This together with Proposition \ref{s5.Flbd} concludes the theorem.
	
 Combining the evolution equations \eqref{s3:evol-metric} -- \eqref{s3:evol-F}, we calculate that the tensor $S_{ij}=h_{ij}-\e Fg_{ij}$ evolves by
		\begin{align}\label{s3:evol-Sij}
		\partial_tS_{ij}=&\partial_th_{ij}-\varepsilon \partial_tFg_{ij}-\varepsilon F\partial_tg_{ij}\nonumber\\
=&\frac{1+\cos\t\langle \nu,e\rangle}{F^2}\dot{F}^{k\ell}\nabla_k \nabla_\ell S_{ij}+\left\langle V+x-\frac{\cos\t}{F}e,\nabla S_{ij}\right\rangle\nonumber\\
		&+\frac{1+\cos\t\langle \nu,e\rangle}{F^2}\ddot{F}^{k\ell,pq}\nabla_i h_{k\ell}\nabla_j h_{pq} -\frac{2(1+\cos\t\langle \nu,e\rangle)}{F^3}\nabla_iF\nabla_jF\nonumber\\
		&+\frac{\cos\t}{F^2}(\nabla_i F h_j^k+\nabla_j F h_i^k)\langle e,e_k\rangle -\frac{2\varepsilon\cos\t}{F^2}\dot{F}^{k\ell}\nabla_k F h_\ell^j\langle e_j,e\rangle g_{ij}\nonumber\\
&+\frac{2\varepsilon\left(1+\cos\theta\langle\nu,e\rangle\right)}{F^3}\dot{F}^{k\ell}\nabla_kF\nabla_\ell F g_{ij}\nonumber\\
		&+\(\frac{\cos\t\langle \nu,e\rangle}{F}-2\langle x,\nu\rangle\) ((S^2)_{ij}+2\e FS_{ij})+S_j^k \nabla_i V_k+S_i^k\nabla_j V_k \nonumber\\
		&+\(1+\frac{1+\cos\t\langle \nu,e\rangle}{F^2}\dot{F}^{k\ell}(h^2)_{k\ell}-2\e(1+\cos\t\langle \nu,e\rangle)+2\e\langle x,\nu\rangle F\)S_{ij}\nonumber\\
		&+\frac{\e(2+\cos\t\langle \nu,e\rangle)}{F}\left(\dot{F}^{k\ell}(h^2)_{k\ell}-\e F^2\right)g_{ij}.
		\end{align}
We need to check both the conditions \eqref{conditon-MP} and \eqref{eq-bou} in order to apply Theorem \ref{s1:thm-max principle}.

We first check the boundary condition \eqref{eq-bou}. Assume that $S_{ij}\geq 0$, and $S_{ij}\xi^j=0$ holds at a boundary point $p_0\in \partial\Sigma_{t_0}$, where $\xi$ is a null vector.    Let $\{e_\a\}_{\a=1}^{n-1}$ be a local orthonormal frame of $\partial \Sigma_{t_0}\subset \Sigma_{t_0}$, then $\{\mu\}\cup\{e_\a\}_{\a=1}^{n-1}$ forms an orthonormal frame of $\Sigma_{t_0}$ and we have either $\xi=\mu$ or $\xi=e_\a$ for some $\a$ on $\partial \Sigma_{t_0}$. By Proposition \ref{s2:prop-second-fundamental-form} and $\nabla_\mu F=0$ on $\partial\Sigma_{t_0}$, we have:
	\begin{enumerate}[(i)]
		\item If $\xi=e_\a$, then $h_{\mu\mu}\geq h_{\a\a}$ and
		\begin{align*}
		(\nabla_\mu S_{ij})\xi^i\xi^j=&\nabla_\mu h_{\a\a}-\varepsilon\nabla_\mu F\\
=&\tilde{h}_{\a\a}(h_{\mu\mu}-h_{\a\a})=\cot\t h_{\a\a}(h_{\mu\mu}-h_{\a\a})\geq 0.
		\end{align*}
		\item If $\xi=\mu$, then $h_{\a\a}\geq h_{\mu\mu}$ for all $\a=1,\cdots,n-1$. By \eqref{s3:bdry-F}, we have
		\begin{align*}
		0=\nabla_\mu F=\dot{F}^{\mu\mu}\nabla_{\mu} h_{\mu\mu}+\sum_{\a=1}^{n-1}\dot{F}^{\a\a}\nabla_{\mu} h_{\a\a}.
		\end{align*}
		Then we get
		\begin{align*}
		(\nabla_\mu S_{ij})\xi^i\xi^j=&\nabla_\mu h_{\mu\mu}-\varepsilon\nabla_\mu F=-\sum_{\a=1}^{n-1}\frac{\dot{F}^{\a\a}}{\dot{F}^{\mu\mu}}\nabla_{\mu} h_{\a\a}\\
		=&\sum_{\a=1}^{n-1}\frac{\dot{F}^{\a\a}}{\dot{F}^{\mu\mu}}\cot\t h_{\a\a}\(h_{\a\a}-h_{\mu\mu}\)\geq 0.
		\end{align*}
	\end{enumerate}
Therefore, the boundary condition \eqref{eq-bou} is satisfied.

We next check the condition \eqref{conditon-MP}. Assume that $S_{ij}\geq 0$, and $S_{ij}\xi^j=0$ holds at a point $(p_0,t_0)$, where $\xi$ is a null vector. The terms in \eqref{s3:evol-Sij} involving $S_{ij}$ and $(S^2)_{ij}$ can be ignored due to the null vector condition.  We choose an orthonormal frame $\{e_1,\cdots, e_n\}$ around $p_0$ such that $(h_{ij})$ is diagonal with principal curvatures $\kappa=(\kappa_1,\cdots,\kappa_n)$ in the increasing order and $e_i$ corresponds to the principal direction of $\kappa_i$. The the null vector $\xi=e_1$ is the eigenvector corresponding to $\kappa_1$. We need to show that
    \begin{align}\label{s4:key-ineq-1}
    Q_1:=&\frac{1+\cos\t\langle \nu,e\rangle}{F^2}\(\ddot{F}^{k\ell,pq}\nabla_1 h_{k\ell}\nabla_1 h_{pq}-\frac{2}{F}|\nabla_1F|^2\)\nonumber\\
    &+\frac{2\varepsilon\left(1+\cos\theta\langle\nu,e\rangle\right)}{F^3}\dot{F}^{k\ell}\nabla_kF\nabla_\ell F\nonumber\\
    &+\frac{2\varepsilon\cos\t}{F}\nabla_1 F \langle e,e_1\rangle-\frac{2\varepsilon\cos\t}{F^2}\dot{F}^{k\ell}\nabla_k F h_\ell^j\langle e_j,e\rangle\nonumber\\
    &+\frac{\e(2+\cos\t\langle \nu,e\rangle)}{F}\left(\dot{F}^{k\ell}(h^2)_{k\ell}-\e F^2\right)\nonumber\\
    &+\frac{2(1+\cos\t\langle \nu,e\rangle)}{F^2}\sup_{\G}\dot{F}^{k\ell}(2\G_k^p \nabla_\ell S_{1p}-\G_k^p\G_\ell^qS_{pq})\geq 0.
    \end{align}

By continuity we can assume that the principal curvatures are mutually distinct at $(x_0,t_0)$ and satisfy $\kappa_1<\kappa_2<\cdots<\kappa_n$. This is reasonable (see \cite[p.30]{And2007}) since for any positive definite symmetric matrix $W$ with $W_{ij}\geq \varepsilon F(W)\delta_{ij}$ and $W_{ij}\xi^i\xi^j=\varepsilon F(W)|\xi|^2$ for some $\xi\neq 0$, there is a sequence of symmetric matrixes $\{W^{(k)}\}$ approaching $W$, satisfying $W^{(k)}_{ij}\geq \varepsilon F(W^{(k)})\delta_{ij}$ and $W^{(k)}_{ij}\xi^i\xi^j=\varepsilon F(W^{(k)})|\xi|^2$ and with each $W^{(k)}$ having distinct eigenvalues. Hence it suffice to prove the result in the case where all of $\kappa_i$ are distinct.

Since $S_{11}=\k_1-\e F=0$ at $(x_0,t_0)$ and $S_{\ell\ell}>0$ for $\ell=2,\cdots,n$. By Lemma \ref{s3:tech-lemma}, the minimality of $S_{11}$ implies that
\begin{equation}\label{s5.DS}
0=\nabla_k S_{11}=\nabla_kh_{11}-\varepsilon\nabla_kF
\end{equation}
 for all $k=1,\cdots,n$.   Using the estimate \eqref{s2:F-ddt} and \eqref{s2.d2F}, we have
    \begin{align}\label{s5.Q1-0}
    &\ddot{F}^{k\ell,pq}\nabla_1 h_{k\ell}\nabla_1 h_{pq}-\frac{2}{F}|\nabla_1 F|^2  \nonumber\\
   =&\ddot{f}^{k\ell}\nabla_1h_{kk}\nabla_1h_{\ell\ell}+2\sum_{k>\ell}\frac{\dot{f}^k-\dot{f}^\ell}{\kappa_k-\kappa_\ell}(\nabla_1h_{k\ell})^2-\frac{2}{F}|\nabla_1 F|^2\nonumber\\
   \geq & -2 \sum_{k=1}^n\frac{\dot{f}^k}{\k_k}|\nabla_1 h_{kk}|^2+2\sum_{k>\ell}\frac{\dot{f}^k-\dot{f}^\ell}{\kappa_k-\kappa_\ell}(\nabla_1h_{k\ell})^2.
    \end{align}
Using \eqref{s5.DS}, we rewrite the second line of \eqref{s4:key-ineq-1} as follows
    \begin{align}
    \text{Line 2 of } \eqref{s4:key-ineq-1} =& \frac{2\varepsilon\left(1+\cos\theta\langle\nu,e\rangle\right)}{F^3}\sum_{k=1}^{n}\dot{f}^k\frac{(\nabla_kh_{11})^2}{\varepsilon^2}\nonumber\\
      =&  \frac{2\left(1+\cos\theta\langle\nu,e\rangle\right)}{F^2}\sum_{k=1}^{n}\frac{\dot{f}^k}{\kappa_1}(\nabla_kh_{11})^2. \label{s5.Q1-1}
    \end{align}
  Similarly,
    \begin{align}
      \text{Line 3 of } \eqref{s4:key-ineq-1} = & \frac{2\varepsilon\cos\t}{F}\sum_{k=1}^{n}\dot{f}^k\nabla_1h_{kk} \langle e,e_1\rangle-\frac{2\varepsilon\cos\t}{F^2}\sum_{k=1}^{n}\dot{f}^{k}\frac{\nabla_kh_{11}}{\varepsilon} \kappa_k\langle e_k,e\rangle \nonumber\\
      = & \frac{2\varepsilon\cos\t}{F}\sum_{k>1}\dot{f}^k\nabla_1h_{kk} \langle e,e_1\rangle-\frac{2\cos\t}{F^2}\sum_{k>1}\dot{f}^{k}\kappa_k\nabla_kh_{11}\langle e_k,e\rangle. \label{s5.Q1-2}
    \end{align}
    Since $\kappa_1=\varepsilon F$ and $\kappa_\ell\geq \varepsilon F$ for $\ell=2,\cdots,n$,  for the fourth line of \eqref{s4:key-ineq-1} we have
    \begin{align}\label{s5.Q1-3}
      &\dot{F}^{k\ell}(h^2)_{k\ell}-   \varepsilon  F^2= \sum_{\ell=2}^{n}\dot{f}^\ell\kappa_\ell(\kappa_\ell-\kappa_1)\geq 0.
    \end{align}
    Finally, the last line in \eqref{s4:key-ineq-1} can be explicitly computed as follows
    \begin{align}\label{s5.Q1-4}
    &\sup_\Gamma\dot{F}^{k\ell}(2\G_k^p\nabla_\ell S_{1p}-\G_k^p\G_\ell^q S_{pq})\nonumber\\
    =&\sup_\Gamma\sum_{k=1}^n\dot{f}^{k}\sum_{p=2}^{n}\left(2\G^p_k\nabla_k S_{1p}-(\G_k^p)^2S_{pp}\right)\nonumber\\
        =&\sup_\Gamma\sum_{k=1}^n\dot{f}^{k}\sum_{p=2}^{n}\left(2\G^p_k\nabla_k h_{1p}-(\G_k^p)^2S_{pp}\right)\nonumber\\
      =&\sup_\Gamma\sum_{k=1}^n\dot{f}^{k}\sum_{\ell=2}^{n}\(\frac{|\nabla_1 h_{k\ell}|^2}{S_{\ell\ell}}-\(\G_k^\ell-\frac{\nabla_1 h_{k\ell}}{S_{\ell\ell}}\)^2S_{\ell\ell}\)\nonumber\\
   = &\sum_{k=1}^n\dot{f}^{k}\sum_{\ell=2}^{n}\frac{|\nabla_1 h_{k\ell}|^2}{S_{\ell\ell}}\nonumber\\
   =&\sum_{k=1}^n\sum_{\ell=2}^{n}\frac{\dot{f}^{k}}{\kappa_{\ell}-\kappa_1}|\nabla_1 h_{k\ell}|^2,
    \end{align}
    where in the fourth equality we have chosen $\G_k^\ell=\frac{\nabla_1 h_{k\ell}}{S_{\ell\ell}}$ for $\ell=2,\cdots,n$.  Combining \eqref{s5.Q1-0} -- \eqref{s5.Q1-4},  we have the following estimate for $Q_1$:
    \begin{align}\label{s5.Q2}
     Q_1\geq &\frac{1+\cos\t\langle \nu,e\rangle}{F^2}\biggl( -2 \sum_{k\geq 1}\frac{\dot{f}^k}{\k_k}|\nabla_1 h_{kk}|^2+2\sum_{k>\ell}\frac{\dot{f}^k-\dot{f}^\ell}{\kappa_k-\kappa_\ell}(\nabla_1h_{k\ell})^2\nonumber\\
     &+2\sum_{k\geq 1}\frac{\dot{f}^k}{\kappa_1}(\nabla_kh_{11})^2+2\sum_{k\geq 1,\ell>1}\frac{\dot{f}^{k}}{\kappa_{\ell}-\kappa_1}|\nabla_1 h_{k\ell}|^2\biggr)\nonumber\\
    &+\frac{2\varepsilon\cos\t}{F}\sum_{k>1}\dot{f}^k\nabla_1h_{kk} \langle e,e_1\rangle-\frac{2\cos\t}{F^2}\sum_{k>1}\dot{f}^{k}\kappa_k\nabla_kh_{11}\langle e_k,e\rangle \nonumber\\
    &+\frac{\e(2+\cos\t\langle \nu,e\rangle)}{F}\sum_{k>1}\dot{f}^k\kappa_k(\kappa_k-\kappa_1).
    \end{align}

 To show that $Q_1\geq 0$, we  estimate the first two lines of \eqref{s5.Q2} as follows:
  \begin{align*}
   \text{ Lines 1-2 of }\eqref{s5.Q2}= & \frac{1+\cos\t\langle \nu,e\rangle}{F^2}\biggl( -2\frac{\dot{f}^1}{\kappa_1}(\nabla_1h_{11})^2-2 \sum_{k>1}\frac{\dot{f}^k}{\k_k}|\nabla_1 h_{kk}|^2\nonumber\\
   &+2\sum_{k>\ell>1}\frac{\dot{f}^k-\dot{f}^\ell}{\kappa_k-\kappa_\ell}(\nabla_1h_{k\ell})^2+2\sum_{k>1}\frac{\dot{f}^k-\dot{f}^1}{\kappa_k-\kappa_1}(\nabla_kh_{11})^2\nonumber\\
     &+2\sum_{k\geq 1}\frac{\dot{f}^k}{\kappa_1}(\nabla_kh_{11})^2+2\sum_{k>1,\ell>1}\frac{\dot{f}^{k}}{\kappa_{\ell}-\kappa_1}|\nabla_1 h_{k\ell}|^2+2\sum_{\ell>1}\frac{\dot{f}^{1}}{\kappa_{\ell}-\kappa_1}|\nabla_\ell h_{11}|^2\biggr)\nonumber\\
   \geq & \frac{1+\cos\t\langle \nu,e\rangle}{F^2}\biggl( 2 \sum_{k>1}\left(\frac{\dot{f}^{k}}{\kappa_{k}-\kappa_1}-\frac{\dot{f}^k}{\k_k}\right)|\nabla_1 h_{kk}|^2\nonumber\\
   &+2\sum_{k>1}\left(\frac{\dot{f}^k}{\kappa_1}+\frac{\dot{f}^k}{\kappa_k-\kappa_1}\right)(\nabla_kh_{11})^2\nonumber\\
     &+2\sum_{k>\ell>1}\left(\frac{\dot{f}^k-\dot{f}^\ell}{\kappa_k-\kappa_\ell}+\frac{\dot{f}^{k}}{\kappa_{\ell}-\kappa_1}+\frac{\dot{f}^{\ell}}{\kappa_{k}-\kappa_1}\right)(\nabla_1h_{k\ell})^2\biggr)\nonumber\\
      \geq & \frac{1+\cos\t\langle \nu,e\rangle}{F^2}\biggl( 2 \sum_{k>1}\left(\frac{\dot{f}^{k}}{\kappa_{k}-\kappa_1}-\frac{\dot{f}^k}{\k_k}\right)|\nabla_1 h_{kk}|^2\nonumber\\
   &+2\sum_{k>1}\left(\frac{\dot{f}^k}{\kappa_1}+\frac{\dot{f}^k}{\kappa_k-\kappa_1}\right)(\nabla_kh_{11})^2\nonumber\\
     &+2\sum_{k>\ell>1}\left(-\frac{\dot{f}^k}{\kappa_\ell}-\frac{\dot{f}^\ell}{\kappa_k}+\frac{\dot{f}^{k}}{\kappa_{\ell}-\kappa_1}+\frac{\dot{f}^{\ell}}{\kappa_{k}-\kappa_1}\right)(\nabla_1h_{k\ell})^2\biggr)\nonumber\\
     \geq& \frac{2(1+\cos\t\langle \nu,e\rangle)}{F^2}\biggl( \sum_{k>1}\frac{\dot{f}^{k}\kappa_1}{\kappa_k(\kappa_{k}-\kappa_1)}|\nabla_1 h_{kk}|^2+\sum_{k>1}\frac{\dot{f}^k\kappa_k}{\kappa_1(\kappa_k-\kappa_1)}(\nabla_kh_{11})^2\biggr),
  \end{align*}
where in the second inequality we used \eqref{s2:inv-conc}.   This implies that
      \begin{align}\label{s5.Q2-1}
    Q_1\geq &\frac{2\varepsilon(1+\cos\t\langle \nu,e\rangle)}{F} \sum_{k>1}\frac{\dot{f}^{k}}{\kappa_k(\kappa_{k}-\kappa_1)}|\nabla_1 h_{kk}|^2+\frac{2\varepsilon\cos\t}{F}\sum_{k>1}\dot{f}^k\nabla_1h_{kk} \langle e,e_1\rangle\nonumber\\
   &+\frac{2(1+\cos\t\langle \nu,e\rangle)}{F^2} \sum_{k>1}\frac{\dot{f}^k\kappa_k}{\kappa_1(\kappa_k-\kappa_1)}(\nabla_kh_{11})^2-\frac{2\cos\t}{F^2}\sum_{k>1}\dot{f}^{k}\kappa_k\nabla_kh_{11}\langle e_k,e\rangle \nonumber\\
    &+\frac{\e(2+\cos\t\langle \nu,e\rangle)}{F}\sum_{k>1}\dot{f}^k\kappa_k(\kappa_k-\kappa_1).
    \end{align}
   Using the Young's inequality, the first line of \eqref{s5.Q2-1} can be estimated as
    \begin{align*}
     \text{Line 1 of }\eqref{s5.Q2-1} \geq  & -\frac{\varepsilon\cos^2\theta\langle e,e_1\rangle^2 }{2F(1+\cos\t\langle \nu,e\rangle)}\sum_{k>1}\dot{f}^k\kappa_k(\kappa_k-\kappa_1).
    \end{align*}
    Similarly,
     \begin{align*}
     \text{Line 2 of }\eqref{s5.Q2-1}\geq  & -\frac{\varepsilon\cos^2\theta}{2F(1+\cos\t\langle \nu,e\rangle)}\sum_{k>1}\dot{f}^k\kappa_k(\kappa_k-\kappa_1)\langle e_k,e\rangle^2\\
     \geq &-\frac{\varepsilon\cos^2\theta(1-\langle e,\nu\rangle^2-\langle e,e_1\rangle^2)}{2F(1+\cos\t\langle \nu,e\rangle)}\sum_{k>1}\dot{f}^k\kappa_k(\kappa_k-\kappa_1).
    \end{align*}
    Therefore, we conclude that
     \begin{align*}
    Q_1\geq &\biggl(\frac{\e(2+\cos\t\langle \nu,e\rangle)}{F}-\frac{\varepsilon\cos^2\theta(1-\langle e,\nu\rangle^2)}{2F(1+\cos\t\langle \nu,e\rangle)}\biggr)\sum_{k>1}\dot{f}^k\kappa_k(\kappa_k-\kappa_1)\nonumber\\
 =  & \frac{\varepsilon\sin^2\theta+3\varepsilon(1+\cos\theta\langle\nu,e\rangle)^2}{2F(1+\cos\t\langle \nu,e\rangle)}\sum_{k>1}\dot{f}^k\kappa_k(\kappa_k-\kappa_1)~\geq~0
    \end{align*}
at $(x_0,t_0)$.   This means that the condition \eqref{conditon-MP} is satisfied.

Applying Theorem \ref{s1:thm-max principle}, we conclude that $S_{ij}\geq 0$ on $M\times [0,T)$. This completes the proof of Theorem \ref{s4:thm-preserving-strict-convex}.
\end{proof}
\begin{rem}
The preservation of convexity can be proved for a large class of curvature function $F$. In fact, our proof also works for curvature function $F$ which is concave and inverse-concave in its arguments.
\end{rem}

\section{Proof of Theorem \ref{s1:thm-convergence} and Theorem \ref{s1:thm-AF-ineq}}\label{sec:6}

Since the initial hypersurface $\Sigma_0$ is strictly convex, by choosing a suitable point lying in $\widehat{\partial\Sigma_0}$ as the origin, $\Sigma_0$ is star-shaped with respect to the origin. We have proved in \S \ref{sec:5} that $\Sigma_t$, $t\in [0,T)$ is star-shaped as well. In the class of star-shaped hypersurfaces, we can reduce the flow \eqref{s1:BGL-flow} to a scalar parabolic equation with an oblique boundary condition. This also ensures the short time existence of the flow \eqref{s1:BGL-flow}.
\subsection{Radial graph parametrization}
Suppose the capillary hypersurface $\Sigma\subset \overline{\mathbb{R}}^{n+1}_+$ is star-shaped with respect to the origin, i.e., $\langle x,\nu\rangle>0$, one can reparametrize it as a graph over $\-{\mathbb S}^n_{+}$. That is, there exists a positive radial function $r$ defined on $\-{\mathbb S}^n_{+}$ such that
\begin{align*}
\Sigma=\{r(z)z~|~z\in\-{\mathbb S}^n_{+}\},
\end{align*}
where $z:=(z_1,\cdots,z_n)$ is a local coordinate of $\-{\mathbb S}^n_{+}$. We denote $\bar{\nabla}$ to be the Levi-Civita connection on $\mathbb S^n$ with respect to the round metric $\s:=g_{\mathbb S^n}$, $\partial_i=\partial_{z_i}$, $\s_{ij}=\s(\partial_i,\partial_j)$, $r_i=\bar{\nabla}_ir$ and $r_{ij}=\bar{\nabla}_i\bar{\nabla}_j r$. It is convenient to use a new radial function  $\varphi(z)=\log r(z)$ and denote
\begin{equation*}
  v=\sqrt{1+|\bar{\nabla}\varphi|^2}.
\end{equation*}
Then the induced metric $g$, the inverse matrix of $g$, the unit normal $\nu$, the second fundamental form $h$ and the Weingarten matrx $\mathcal{W}=(h_i^j)$ of $\Sigma$ can be expressed as follows (see \cite[\S 3]{guan2014}):
\begin{align*}
g_{ij}=&r^2 \d_{ij}+r_ir_j=e^{2\varphi}(\s_{ij}+\varphi_i\varphi_j),\\
g^{ij}=&e^{-2\varphi}\(\s^{ij}-\frac{\varphi^i\varphi^j}{v^2}\),\\
\nu=&\frac{1}{v}\(\partial_r-\frac 1{r^2}\bar{\nabla} r\)=\frac{1}{v}(\partial_r-\frac{1}{r}\bar{\nabla}\varphi),\\
h_{ij}=&\frac{e^\varphi}{v}(\s_{ij}+\varphi_i\varphi_j-\varphi_{ij}),\\
h_i^j=&g^{jk}h_{ki}=\frac{1}{e^\varphi v}\(\d_i^j-(\s^{jk}-\frac{\varphi^j\varphi^k}{v^2})\varphi_{ki} \).
\end{align*}
where $r^i=\s^{ij}r_j$, $\varphi^i=\s^{ij}\varphi_j$. The curvature function $F=\frac{H_\ell}{H_{\ell-1}}$ is a smooth symmetric function of the Weingarten matrix $h_i^j$ and thus can be expressed as a function of $\varphi,\varphi_i,\varphi_{ij}$. We can also express the support function of $\Sigma$ as
\begin{align*}
\langle x,\nu\rangle=\langle r\partial_r,\nu\rangle=\frac{e^\varphi}{v}.
\end{align*}
It's well known that the flow \eqref{s2:BGL-flow} is equivalent to the scalar parabolic equation for the radial function $r$ (and thus for $\varphi=\log r$) (see \cite{Ger06}):
\begin{equation}\label{s6.dr}
  \partial_t\varphi=\frac{1}{r}\partial_tr=\frac{v}{e^\varphi}\mathcal{F},\qquad \text{in}~ \mathbb S_+^n \times [0,T).
\end{equation}

The capillary boundary condition can also be expressed by the radial function $\varphi$ (see \cite[\S 4.1]{Wang-Weng-Xia2022}). We use the polar coordinate in the half-space. Let $x:=(x',x_{n+1})\in \mathbb R^n\times [0,\infty)$ and $z:=(\b,\xi)\in [0,\frac{\pi}{2}]\times \mathbb S^{n-1}$, we have
\begin{align*}
x_{n+1}=r\cos\b, \quad |x'|=r\sin\b.
\end{align*}
In this polar coordinates, the Euclidean metric is given by
$$
dx^2=dr^2+r^2(d\b^2+\sin^2\b g_{\mathbb S^{n-1}}).
$$
The vector $e_{n+1}=\cos\b \partial_r-\frac{\sin\b}{r}\partial_\b$.  Then we have
\begin{align*}
\langle \nu,e_{n+1}\rangle=\frac{1}{v}(\cos\b+\sin\b \bar{\nabla}_{\partial_\b}\varphi).
\end{align*}
On the boundary $\partial \mathbb S_{+}^n$, there holds
\begin{align*}
\-N \circ x=-e_{n+1}=\frac{1}{r}\partial_\b,
\end{align*}
and thus
\begin{align*}
-\cos\t =\langle \nu, \-N\circ x\rangle=-\frac{\bar{\nabla}_{\partial_\b} \varphi}{v},
\end{align*}
which we rewrite as
\begin{equation}\label{s6.dr2}
  \bar{\nabla}_{\partial_\beta}\varphi=\cos\theta \sqrt{1+|\bar{\nabla}\varphi|^2},\qquad \mathrm{on}~\partial\mathbb S_+^n \times [0,T).
\end{equation}
Since $0\leq \cos\t<1$, \eqref{s6.dr2} is  a uniform oblique boundary condition on $\partial\mathbb S_+^n$ as in \cite{Ural1991}. In fact, denote the boundary operator as
\begin{equation*}
  b(z,\varphi,p)=-\langle p,\partial_\beta\rangle +\cos\theta \sqrt{1+|p|^2},\quad p\in \mathbb{R}^n.
\end{equation*}
 Then \eqref{s6.dr2} is equivalent to $b(z,\varphi,\bar{\nabla}\varphi)=0$. As the unit inward normal of $\partial \mathbb S_{+}^n$ in $\mathbb S_{+}^n$ is $\gamma=-\partial_\beta$,  we have
 \begin{align*}
\sigma\left( \frac{\partial b}{\partial p}\Big|_{p=\bar{\nabla}\varphi}, \gamma\right)= &\sigma\left( \partial_\beta, \partial_\beta-\cos\theta \frac{p}{\sqrt{1+|p|^2}}\Big|_{p=\bar{\nabla}\varphi}\right)  \\
  =  & 1-\cos\theta \frac{\bar{\nabla}_{\partial_\beta}\varphi}{\sqrt{1+|\bar{\nabla}\varphi|^2}}\\
  =&1-\cos^2\theta>0.
 \end{align*}
 This means that oblique boundary condition \eqref{s6.dr2} is non-degeneracy as in \cite{Ural1991}.

In summary, under the radial graph parametrization, the flow \eqref{s1:BGL-flow} is transformed into a scalar parabolic equation with a uniform oblique boundary condition up to a tangential diffeomorphism:
\begin{align}\label{s4:scalar-pde}
\left\{\begin{aligned}
\partial_t\varphi=&\frac{v}{e^\varphi}\mathcal{F}, \quad &\text{in $\mathbb S_+^n \times [0,T)$,}\\
\bar{\nabla}_{\partial_\b}\varphi=&\cos\t \sqrt{1+|\bar{\nabla}\varphi|^2}, \quad &\text{on $\partial \mathbb S_{+}^n\times [0,T)$},\\
\varphi(\cdot,0)=&\varphi_0(\cdot), \quad &\text{on $\mathbb S^n_{+}$},
\end{aligned}\right.
\end{align}
where $\varphi_0$ is the radial function of $\Sigma_0$ over $\-{\mathbb S}_{+}^n$ and
$$
\mathcal{F}=\frac{H_{\ell-1}}{H_{\ell}}\left[ 1-\frac{\cos\t}{v}\(\cos\b+\sin \b\bar{\nabla}_{\partial_\b}\varphi\)\right]-\frac{e^\varphi}{v}.
$$

\subsection{Curvature estimates}
First, we deduce the uniform bound of $F$.
\begin{prop}\label{s4:prop-bound-F}
Along the flow \eqref{s2:BGL-flow}, if the initial hypersurface $\Sigma_0$ is strictly convex, then there holds
\begin{align}\label{s4:bounds-F}
0<C \leq F(p,t) \leq \max_{M} F(\cdot,0), \quad \forall (p,t)\in M\times [0,T),
\end{align}
where $\varepsilon>0$ is constant depending only on $\Sigma_0$.
\end{prop}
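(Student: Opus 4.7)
The plan is to handle the two bounds separately, since the lower bound has essentially been established earlier.

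\smallskip

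\noindent\textbf{Lower bound.} This is exactly the content of Proposition \ref{s5.Flbd}: the auxiliary quantity $\bar F = \bar u F$ satisfies a parabolic equation (deduced from \eqref{s3:evol-F} and \eqref{s5.evl-ubar}) together with a homogeneous Neumann condition $\nabla_\mu \bar F=0$ on $\partial\Sigma_t$. The maximum principle then gives $\bar F\geq \min_{\Sigma_0}\bar u F>0$, and since $\bar u$ is bounded above by a uniform constant (as $\Sigma_t\subset \widehat{C_{r_2,\theta}}$ by the barrier estimate \eqref{s4:barrier-estimate}), we conclude $F\geq C>0$. So in the proof we can simply quote Proposition \ref{s5.Flbd} for this part.

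\smallskip

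\noindent\textbf{Upper bound.} The idea is to apply the parabolic maximum principle directly to the evolution equation \eqref{s3:evol-F} with the boundary condition \eqref{s3:bdry-F}. Let $(p_0,t_0)$ with $t_0>0$ be a point where $F$ attains its spatial maximum at time $t_0$. By Theorem \ref{s4:thm-preserving-strict-convex} the solution remains strictly convex, so $\kappa(p_0,t_0)\in \Gamma_+$ and $(\dot F^{k\ell})$ is positive definite. At an interior maximum one has $\nabla F=0$ and $\dot F^{k\ell}\nabla_k\nabla_\ell F\leq 0$, which together with $1+\cos\theta\langle\nu,e\rangle\geq 1-\cos\theta\geq 0$ kills every term in \eqref{s3:evol-F} except the last two, leaving
\begin{equation*}
\partial_t F\big|_{(p_0,t_0)} \;\leq\; F - \frac{1}{F}\dot F^{k\ell}(h^2)_{k\ell}.
\end{equation*}
The key algebraic input is the lower bound $\dot F^{k\ell}(h^2)_{k\ell}\geq F^2$ from \eqref{s2:key-inequality} in Lemma \ref{s2.lem3}, valid exactly because $\kappa\in\Gamma_+$. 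This yields $\partial_t F\leq 0$ at $(p_0,t_0)$, so $\max_{\Sigma_t}F$ is non-increasing, and the desired upper bound $F\leq \max_M F(\cdot,0)$ follows.

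\smallskip

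\noindent\textbf{Handling the boundary.} The only subtlety is that the maximum may a priori be attained on $\partial\Sigma_t$. However, the capillary structure forces the homogeneous oblique condition $\nabla_\mu F=0$ on $\partial\Sigma_t$ by \eqref{s3:bdry-F}. Thus a boundary maximum is compatible with Hopf's lemma, and a standard parabolic maximum principle for equations with homogeneous Neumann (or oblique) boundary data applies; alternatively, one can invoke Stahl's scalar maximum principle for manifolds with boundary. The same interior computation then goes through at a boundary maximum, since only $\nabla F=0$ and $\dot F^{k\ell}\nabla_k\nabla_\ell F\leq 0$ are used. I do not expect any real obstacle here; the preserved strict convexity from Theorem \ref{s4:thm-preserving-strict-convex} and the sharp cone inequality \eqref{s2:key-inequality} are precisely what make the argument work, and both are already at our disposal.
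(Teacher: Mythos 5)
Your proof matches the paper's argument essentially line for line: lower bound by quoting Proposition \ref{s5.Flbd}, upper bound by applying Stahl's scalar maximum principle to the evolution equation \eqref{s3:evol-F} with Neumann boundary condition \eqref{s3:bdry-F}, using the cone inequality $\dot F^{k\ell}(h^2)_{k\ell}\geq F^2$ from Lemma \ref{s2.lem3} (which holds because Theorem \ref{s4:thm-preserving-strict-convex} keeps $\kappa\in\Gamma_+$). Your extra remark spelling out why the boundary maximum is handled by the homogeneous oblique condition is exactly the content of Stahl's theorem, which the paper cites directly.
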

\begin{proof}
	The lower bound of $F$ has been proved in Proposition \ref{s5.Flbd}. For the upper bound of $F$, we note that $\dot{F}^{k\ell}(h^2)_{k\ell}\geq F^2$. By the evolution equation \eqref{s3:evol-F} of $F$ and the boundary condition $\nabla_\mu F=0$, the maximum principle (see \cite[Theorem 3.1]{Stahl1996-2}) implies that $F(p,t) \leq \max_{M} F(\cdot,0)$.
\end{proof}

Next, we prove the uniform upper bound of the mean curvature.
\begin{prop}\label{s4:upper-bound-H}
	There exists $C>0$ depending only on $\Sigma_0$ such that
	\begin{align*}
	H(p,t)\leq C, \quad \forall (p,t)\in [0, T).
	\end{align*}
\end{prop}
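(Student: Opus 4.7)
The plan is to apply the scalar parabolic maximum principle directly to the evolution equation \eqref{s3:evol-mean-curvature} for $H$, exploiting three facts already at our disposal: strict convexity $h_{ij}\geq \varepsilon F g_{ij}$ (Theorem \ref{s4:thm-preserving-strict-convex}), the two-sided bound on $F$ from Proposition \ref{s4:prop-bound-F}, and the boundary inequality $\nabla_\mu H\leq 0$ from \eqref{s3:bdry-H}. I would study the quantity $H_{\max}(t)=\max_{\Sigma_t}H(\cdot,t)$ and show that at its first-time-maximum we get a differential inequality $\partial_tH\leq C_1H-c_1H^2$.

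At any interior spatial maximum of $H$ we have $\nabla H=0$ and $\dot F^{k\ell}\nabla_k\nabla_\ell H\leq 0$. In \eqref{s3:evol-mean-curvature}, the curvature-Hessian term $\ddot F^{k\ell,pq}\nabla_i h_{k\ell}\nabla_i h_{pq}$ is non-positive by concavity of $F$, and $-|\nabla F|^2$ is trivially non-positive. The mixed gradient term $\tfrac{2\cos\theta}{F^2}\langle\nabla F,\nabla\langle\nu,e\rangle\rangle$ is handled by Cauchy--Schwarz: part of it is absorbed into the $|\nabla F|^2$ term, and the remainder is bounded by a multiple of $|A|^2/F$, using that $\nabla_i\langle\nu,e\rangle=h_{ij}\langle e^j,e\rangle$ yields $|\nabla\langle\nu,e\rangle|^2\leq |A|^2$. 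The coefficient in front of the linear-in-$H$ term is uniformly bounded via Lemma \ref{s2.lem3}(iii), which gives $\dot F^{k\ell}(h^2)_{k\ell}\leq (n-\ell+1)F^2$. After these absorptions, a definite negative multiple of $|A|^2/F$ survives. Then using convexity ($\kappa_i>0$ so $H^2\leq n|A|^2$) together with $F\leq \max_\Sigma F(\cdot,0)$ from Proposition \ref{s4:prop-bound-F}, we obtain at an interior maximum
\[
\partial_tH\leq C_1 H-\frac{H^2}{nC_2},
\]
with $C_1,C_2$ depending only on $n$, $\ell$, $\theta$ and $\Sigma_0$. This forces $H\leq nC_1C_2$ whenever $H$ exceeds its initial maximum.

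For boundary points, the Neumann-type inequality $\nabla_\mu H\leq 0$ proved in \eqref{s3:bdry-H} (which used strict convexity and $\theta\in(0,\pi/2]$) combines with the parabolic Hopf lemma to rule out a first-time strict maximum on $\partial\Sigma_t$; equivalently, Stahl's scalar boundary maximum principle \cite{Stahl1996-2} applies directly and reduces the problem to the interior analysis above. The main technical obstacle I anticipate is the bookkeeping of the mixed term $\langle\nabla F,\nabla\langle\nu,e\rangle\rangle$: one needs the uniform positivity $1+\cos\theta\langle\nu,e\rangle\geq 1-\cos\theta>0$ valid for $\theta\in(0,\pi/2]$ to guarantee a strict sign for the coefficient of $-|\nabla F|^2/F^3$ and hence enough room to absorb the Cauchy--Schwarz cross term without destroying the dominant $-|A|^2/F$ contribution.
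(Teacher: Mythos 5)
Your proposal is correct and follows essentially the same route as the paper: at a spatial maximum, discard the concavity (Hessian-of-$F$) term and $\dot F^{k\ell}\nabla_k\nabla_\ell H$, bound the zero-order coefficient via $\dot F^{k\ell}(h^2)_{k\ell}\le(n-\ell+1)F^2$, absorb the cross term $\tfrac{2\cos\theta}{F^2}\langle\nabla F,\nabla\langle\nu,e\rangle\rangle$ into the negative $|\nabla F|^2$ term by Cauchy--Schwarz/completing the square (the paper computes the leftover explicitly as $\tfrac{\cos^2\theta}{2F(1+\cos\theta\langle\nu,e\rangle)}|A|^2|e^T|^2$, which still leaves a strictly negative multiple of $|A|^2/F$), and use the boundary inequality $\nabla_\mu H\le0$ to reduce to the interior argument. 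The paper phrases the boundary reduction via the Hopf-type first-derivative argument of Lemma~\ref{s3:tech-lemma} rather than citing Stahl's scalar maximum principle, but the substance is the same, and your ODE inequality $\partial_tH\le C_1H-c_1H^2$ is equivalent to the paper's $\frac{d}{dt}\max H\le C_1H-C_2|A|^2$ combined with $H^2\le n|A|^2$.
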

\begin{proof}
	The proof is by applying maximum principle to the evolution equation \eqref{s3:evol-mean-curvature} of $H$ and is essentially the same as \cite[Proposition 4.8]{Wang-Weng-Xia2022}. The only difference is that they used the fact $\dot{F}^{k\ell}(h^2)_{k\ell}=F^2$ due to that  $F=H_n/H_{n-1}$, while for general $F=H_\ell/H_{\ell-1}$ we instead use the inequality \eqref{s2:key-inequality}:
\begin{equation}\label{s6.F}
\dot{F}^{k\ell}(h^2)_{k\ell}\leq (n-\ell+1)F^2.
\end{equation}
 We include the proof here for convenience of the readers.

By the concavity of $F$, we have
	\begin{align*}
    (1+\cos\t\langle \nu,e\rangle) F^{-2}\ddot{F}^{kl,pq}\nabla_i h_{kl}\nabla^i h_{pq}\leq 0.
	\end{align*}
Since $\nabla_\mu H\leq 0$ on the boundary $\partial\Sigma_t$, arguing as Lemma \ref{s3:tech-lemma}, we know that $\nabla H=0$ and $\nabla^2H\leq 0$ at the spatial maximum point of $H$, no matter the maximum point is an interior point or a boundary point. Then, by the evolution equation \eqref{s3:evol-mean-curvature} of $H$, we have
	\begin{align}\label{s6.H1}
	\frac{d}{dt}\max_{M}H(p,t) \leq &\(\frac{1+\cos\t \langle \nu,e\rangle}{F^2}\dot{F}^{kl}(h^2)_{kl}+1\)H \nonumber\\
	&-\frac{2(1+\cos\t\langle \nu,e\rangle)}{F^3}|\nabla F|^2+\frac{2\cos\t}{F^2}\langle \nabla F, \nabla\langle \nu,e\rangle\rangle \nonumber\\
	&-(2+\cos\t \langle \nu,e\rangle) F^{-1}|A|^2.
	\end{align}
	Using \eqref{s6.F}, the first term on the right hand side of \eqref{s6.H1} is bounded by $C_1H$ for some positive constant $C_1$. The Weingarten equation and Cauchy-Schwarz inequality imply that
\begin{equation*}
  \langle \nabla F, \nabla\langle \nu,e\rangle\rangle \leq |\nabla F||A||e^T|,
\end{equation*}
where $e^T$ denotes the tangential part of $e$. By completing the square, the second line of \eqref{s6.H1} satisfies
	\begin{align*}
	 &-\frac{2(1+\cos\t\langle \nu,e\rangle)}{F^3}|\nabla F|^2+\frac{2\cos\t}{F^2}\langle \nabla F, \nabla\langle \nu,e\rangle\rangle \\
	\leq &-\frac{2(1+\cos\t\langle \nu,e\rangle)}{F^3}|\nabla F|^2+\frac{2\cos\t}{F^2}|\nabla F||A||e^T|\\
=&-\frac{2(1+\cos\t\langle \nu,e\rangle)}{F^3}\left(|\nabla F|-\frac{F\cos\theta |A||e^T|}{2(1+\cos\t\langle \nu,e\rangle)}\right)^2+\frac{\cos^2\theta}{2F(1+\cos\t\langle \nu,e\rangle)}|A|^2|e^T|^2\\
	\leq &\frac{\cos^2\theta}{2F(1+\cos\t\langle \nu,e\rangle)}|A|^2|e^T|^2.
	\end{align*}
It follows that
\begin{align*}
  \frac{d}{dt}\max_{M}H(p,t) \leq & C_1H -(2+\cos\t \langle \nu,e\rangle) F^{-1}|A|^2+\frac{\cos^2\theta}{2F(1+\cos\t\langle \nu,e\rangle)}|A|^2|e^T|^2\\
  = & C_1H-\frac{\sin^2\theta+3(1+\cos\theta\langle \nu,e\rangle)^2}{2F(1+\cos\t\langle \nu,e\rangle)}|A|^2\\
  \leq &C_1H-C_2|A|^2,
\end{align*}
where $C_2$ depends on the uniform bound of $F$ and $\theta$.  The maximum principle implies that $H$ is bounded from above by a uniform constant.
\end{proof}

By Theorem \ref{s4:thm-preserving-strict-convex} and Proposition \ref{s4:upper-bound-H}, we obtain the uniform two-sided bounds on the principal curvatures.
\begin{cor}\label{s4:principal-curv-upper-bound}
	Let $\Sigma_t$, $t\in [0,T)$ be the solution of the flow \eqref{s1:BGL-flow} starting from a strictly convex hypersurface $\Sigma_0$. Then there exists constants $C_1, C_2>0$ depending only on $\Sigma_0$ such that the principal curvatures of $\Sigma_t$ satisfy
	\begin{align}\label{s6.kpbd}
	0<C_1\leq \k_i(p,t)\leq C_2,\qquad i=1,\cdots,n
	\end{align}
for all $(p,t)\in M\times [0,T)$.
\end{cor}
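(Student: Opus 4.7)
The corollary is an immediate synthesis of the three preceding results (Theorem \ref{s4:thm-preserving-strict-convex}, Proposition \ref{s4:prop-bound-F}, and Proposition \ref{s4:upper-bound-H}), so the plan is essentially assembly rather than new analysis. My approach would be to establish the lower and upper bounds on the principal curvatures separately, each via a one-line reduction to a statement already proved.

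For the lower bound, I would start from the tensor inequality $h_{ij}\geq \varepsilon F g_{ij}$ furnished by Theorem \ref{s4:thm-preserving-strict-convex}. Working at a fixed $(p,t)$ in an orthonormal frame diagonalizing the Weingarten matrix, this pointwise inequality translates into $\kappa_i(p,t)\geq \varepsilon F(p,t)$ for every $i=1,\dots,n$. Combining with the uniform positive lower bound $F(p,t)\geq C>0$ from Proposition \ref{s4:prop-bound-F} (which was obtained via maximum principle applied to $\bar{F}=\bar{u}F$ together with the oblique boundary condition $\nabla_\mu \bar{F}=0$), we immediately obtain $\kappa_i(p,t)\geq C_1 := \varepsilon C>0$.

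For the upper bound, the lower bound just established ensures $\kappa_i(p,t)>0$ for all $i$, so $\Sigma_t$ is strictly convex at every $(p,t)$. Consequently each principal curvature is dominated by the trace: $\kappa_i(p,t)\leq \sum_{j=1}^n \kappa_j(p,t)=H(p,t)$. Proposition \ref{s4:upper-bound-H} then provides the uniform upper bound $H(p,t)\leq C_2$ for a constant depending only on $\Sigma_0$, finishing the proof.

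There is no real obstacle here: the heavy lifting was done in proving preservation of strict convexity via the tensor maximum principle of Theorem \ref{s1:thm-max principle} and in the scalar maximum principle arguments for $F$ and $H$. All that remains is to record that the pointwise inequality $h_{ij}\geq \varepsilon F g_{ij}$, together with two-sided scalar bounds on $F$ and $H$, yields uniform two-sided bounds on all principal curvatures simultaneously.
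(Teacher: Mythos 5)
Your proof is correct and follows exactly the paper's (tersely stated) reasoning: the lower bound comes from $h_{ij}\ge\varepsilon F g_{ij}$ (Theorem~\ref{s4:thm-preserving-strict-convex}) together with the uniform lower bound on $F$ (Proposition~\ref{s4:prop-bound-F}), and the upper bound from $\kappa_i\le H$ (valid by convexity) together with Proposition~\ref{s4:upper-bound-H}. You are slightly more explicit than the paper in also citing the lower bound on $F$, which the paper tacitly assumes, but there is no substantive difference.
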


\subsection{Convergence to a spherical cap}

At the beginning of \S \ref{sec:5}, we already proved the $C^0$ estimate of $\Sigma_t$ and deduced a positive lower bound on the support function $u=\langle x,\nu\rangle$. Then the $C^1$-estimate on the radial function $\varphi$ of $\Sigma_t$ follows from $\langle x,\nu\rangle=\frac{e^\varphi}{v}$ and
$$
|\bar{\nabla}\varphi|\leq v\leq C.
$$

Combining the $C^0$, $C^1$-estimates of $\varphi$ with the curvature estimates \eqref{s6.kpbd}, we show that $\varphi\in C^2(\mathbb S_+^n \times [0,T))$ and the scalar equation \eqref{s4:scalar-pde} is uniformly parabolic. Since $0\leq \cos\t<1$, the boundary condition \eqref{s6.dr2} satisfies the uniform oblique property. Then by the parabolic theory in \cite[Theorem 5]{Ural1991} (see also \cite[\S 14]{Lieb96}), we conclude the uniform $C^\infty$-estimates and the long-time existence of the solution to \eqref{s4:scalar-pde}. The convergence to a uniquely determined spherical cap with $\t$-capillary boundary as $t\ra \infty$ follows from the same argument as \cite[Proposition 4.12]{Wang-Weng-Xia2022}. This completes the proof of Theorem \ref{s1:thm-convergence}.

\subsection{Proof of  Theorem \ref{s1:thm-AF-ineq}}\label{sec:7}
     If the hypersurface $\Sigma$ is strictly convex, we run the flow \eqref{s1:BGL-flow} starting from $\Sigma$. For $1\leq \ell<k\leq n$, by Theorem \ref{s1:thm-convergence}, we know that the solution $\Sigma_t$ of the \eqref{s1:BGL-flow} converges smoothly to a spherical cap $C_{r_\infty,\t}$. By \eqref{s1:evol-V-ell} and \eqref{s1:evol-V-k}, we have
      \begin{align*}
      \mathcal{V}_{k,\t}(\widehat{\Sigma})=\mathcal{V}_{k,\t}(\widehat{C_{r_\infty,\t}}), \quad
      \mathcal{V}_{\ell,\t}(\widehat{\Sigma})\leq \mathcal{V}_{\ell,\t}(\widehat{C_{r_\infty,\t}}).
      \end{align*}
      Recall that
      $$
      \mathcal{V}_{m,\t}(\widehat{C_{r,\t}})=|\mathbb B_\t^{n+1}|r^{n+1-m}, \quad 0\leq m\leq n,
	  $$
	  we obtain
	  \begin{align}
	  \(\frac{\mathcal{V}_{k,\t}(\widehat{\Sigma})}{|\mathbb B_\t^{n+1}|}\)^\frac{1}{n+1-k}= & \(\frac{\mathcal{V}_{k,\t}(\widehat{C_{r_\infty,\t}})}{|\mathbb B_\t^{n+1}|}\)^\frac{1}{n+1-k}=r_\infty\nonumber\\
	  =&\(\frac{\mathcal{V}_{\ell,\t}(\widehat{C_{r_\infty,\t}})}{|\mathbb B_\t^{n+1}|}\)^\frac{1}{n+1-\ell}\nonumber\\
\geq &\(\frac{\mathcal{V}_{\ell,\t}(\widehat{\Sigma})}{|\mathbb B_\t^{n+1}|}\)^\frac{1}{n+1-\ell}.\label{s5.af}
	  \end{align}
	  Equality holds if and only if $H_kH_{\ell-1}=H_{k-1}H_{\ell}$ in \eqref{s1:evol-V-k}, which is equivalent to that all $\Sigma_t$ are spherical caps and in particular the initial hypersurface $\Sigma$ is a spherical cap. When $\Sigma$ is convex but not strictly convex, we can approximate $\Sigma$ by a family of strictly convex hypersurfaces $\Sigma_\varepsilon$ as $\varepsilon\to 0$. Then the inequality \eqref{s1:AF-inequality-capillary} follows by approximation. The equality characterization can be proved by using an argument of \cite{GL09}.

Finally, the inequality \eqref{s1:AF-inequality-capillary} for $0=\ell<k\leq n$ follows from combining \eqref{s1.iso} and \eqref{s5.af} for $1=\ell<k\leq n$.  This completes the proof of Theorem \ref{s1:thm-AF-ineq}.

\appendix
\section{Approximation result}
In this appendix, we prove that a convex hypersurface $\Sigma$ with $\theta$-capillary boundary in the half-space can be approximated by a sequence of strictly convex hypersurfaces with $\theta$-capillary boundary in the half-space in the $C^{2,\alpha}$ sense. Note that the free boundary case (i.e. $\theta=\frac{\pi}{2}$) in the unit ball was treated by Lambert and Scheuer in \cite{Lambert-Scheuer2017}. We adapt their idea and include a proof for the case of capillary hypersurfaces in the half-space.
	
Let $\Sigma\subset \-{\mathbb R}_{+}^{n+1}$ be a convex hypersurface with capillary boundary supported on $\partial\-{\mathbb R}_{+}^{n+1}$ at a contact angle $\theta\in (0,\frac{\pi}{2}]$, which is given by the embedding $x_0:M \ra \-{\mathbb R}_{+}^{n+1}$ of a compact manifold $M$ with non-empty boundary. We consider the mean curvature flow
	\begin{equation}\label{flow-mean}
		\left\{\begin{aligned}
			\partial_t x&=-H\nu+V, \qquad \text{in}\quad M \times[0,T),\\
			\langle\bar{N}\circ x,\nu\rangle&=\cos(\pi-\theta), \qquad \text{on}\quad \partial M \times[0,T),\\
			x(\cdot,0)&=x_0(\cdot) \qquad \text{on} \quad M,
		\end{aligned}\right.
	\end{equation}
and denote  $\Sigma_t=x(M,t)$, where $V$ is tangential component of $\partial_t x$ and satisfies $V\vert_{\partial\Sigma_t}=-H\cot\theta\mu$.

The short time existence of flow \eqref{flow-mean} can be obtained by a similar argument as in \cite{Stahl1996-2} by Stahl.
	\begin{thm}\label{short time}
		For any $\alpha\in(0,1)$, the flow \eqref{flow-mean} admits a unique solution:
		\begin{equation*}
			x(\cdot,t)\in C^{\infty}(M\times(0,\delta])\cap C^{2+\alpha,1+\frac{\alpha}{2}}(M\times[0,\delta]),
		\end{equation*}
		where $\delta>0$ is a small constant.
	\end{thm}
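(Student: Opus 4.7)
The plan is to reduce the geometric flow \eqref{flow-mean} to a scalar quasilinear parabolic initial-boundary value problem with a uniformly oblique boundary condition, following the approach of Stahl \cite{Stahl1996-2} adapted to the half-space. Since the statement requires $C^{2+\alpha,1+\alpha/2}$ regularity up to $t=0$ together with $C^\infty$ smoothness for $t>0$, the argument naturally splits into a solvability step in parabolic H\"older spaces followed by parabolic bootstrap.

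For the graphical reduction, since $\Sigma_0=x_0(M)$ is $C^{2+\alpha}$, convex, and properly embedded in $\overline{\mathbb{R}}^{n+1}_+$ with capillary angle $\theta\in(0,\pi/2]$, the unit normal $\nu_0$ is transverse to $\partial\overline{\mathbb{R}}^{n+1}_+$ along $\partial M$. I would construct a tubular parametrization $\Psi\colon M\times (-\delta_0,\delta_0)\to \overline{\mathbb{R}}^{n+1}_+$ such that $\Psi(p,0)=x_0(p)$, $\Psi$ coincides with the normal graph $x_0+\rho\nu_0$ away from the boundary, and $\Psi(\partial M\times(-\delta_0,\delta_0))\subset\partial\overline{\mathbb{R}}^{n+1}_+$ (achievable by a transverse modification near $\partial M$). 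Writing $x(p,t)=\Psi(p,\rho(p,t))$ and substituting into the interior equation yields a quasilinear scalar equation
\begin{equation*}
\partial_t\rho=\mathcal{Q}(p,\rho,\nabla\rho,\nabla^2\rho)
\end{equation*}
whose linearization at $\rho=0$ is uniformly elliptic, while the tangential velocity $V$ contributes only lower-order terms absorbed into the choice of $\Psi$.

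For the boundary condition, differentiating $\langle \bar N\circ x,\nu\rangle=-\cos\theta$ via the relation \eqref{s2:relation-normal} and the graph parametrization produces a nonlinear first-order condition $\mathcal{B}(p,\rho,\nabla\rho)=0$ on $\partial M$. Because $\bar N=\sin\theta\,\mu-\cos\theta\,\nu$ and $|\cos\theta|<1$ for $\theta\in(0,\pi/2]$, a direct computation shows that the coefficient of the co-normal derivative $\partial_\mu\rho$ in the linearization of $\mathcal{B}$ at $\rho=0$ is nonzero, which is precisely the uniform obliqueness condition in the sense of Lieberman and Ural'tseva. The first-order compatibility at $t=0$ is automatic because $\Sigma_0$ already satisfies the capillary condition. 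With uniform parabolicity and uniform obliqueness in hand, existence of a unique $C^{2+\alpha,1+\alpha/2}$ solution on $M\times[0,\delta]$ for some small $\delta>0$ follows from standard quasilinear parabolic theory with oblique data, for instance \cite[Theorem 5]{Ural1991} or the framework in \cite{Lieb96}. For $t>0$, differentiating the equation in tangential directions and iterating interior-and-boundary parabolic Schauder estimates upgrades the solution to $C^{k+\alpha,(k+\alpha)/2}$ for every $k$, yielding the claimed $C^\infty$ regularity.

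The main technical difficulty is the construction of the tubular parametrization $\Psi$ near $\partial M$ in such a way that both the smoothness of the flow equation and the validity of the oblique boundary condition are preserved; this is where the restriction $\theta\in(0,\pi/2]$ enters quantitatively through $\sin\theta>0$. Once this geometric setup is carried out carefully, the remaining PDE arguments are essentially invocations of classical parabolic theory, and the proof closely mirrors Stahl's treatment \cite{Stahl1996-2} of mean curvature flow with free boundary.
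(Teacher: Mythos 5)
Your proposal is correct and follows essentially the same route as the paper, which simply defers to Stahl's method in \cite{Stahl1996-2}: reduce to a scalar quasilinear parabolic equation via a tubular parametrization adapted to the supporting hyperplane, verify that the capillary condition linearizes to a uniformly oblique boundary operator (using $|\cos\theta|<1$), invoke the Ural'tseva/Lieberman theory for existence and uniqueness in $C^{2+\alpha,1+\alpha/2}$, and bootstrap to $C^\infty$ for $t>0$. The paper gives no further detail, so your fleshed-out argument is a faithful expansion of the intended proof.
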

	
	Then we can prove the following approximation result:
	\begin{thm}
		Suppose that $\Sigma_t$, $t\in [0,\delta)$ is a solution to the flow \eqref{flow-mean} starting from the convex hypersurface $\Sigma$. Then $\Sigma_t$ is strictly convex for all time $t>0$ as long as the flow exists.
	\end{thm}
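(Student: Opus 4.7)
The plan is to propagate weak convexity along \eqref{flow-mean} using the tensor maximum principle of Theorem \ref{s1:thm-max principle}, and then upgrade to strict convexity for $t>0$ via a strong maximum principle argument for symmetric $2$-tensors.

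First I derive the evolution of the second fundamental form. Specializing Proposition \ref{s3:prop-evol-general-flow} to $\mathcal{F}=-H$ and applying the Simons identity yields a parabolic system of the schematic form
\begin{equation*}
\partial_t h_{ij}=\Delta h_{ij}+|A|^2 h_{ij}-2H(h^2)_{ij}+\nabla_V h_{ij}+h_j^k\nabla_i V_k+h_i^k\nabla_j V_k.
\end{equation*}
Taking $S_{ij}=h_{ij}$ and testing against a null eigenvector $\xi$, every zeroth-order contribution vanishes: $h_{ij}\xi^i\xi^j=0$, $(h^2)_{ij}\xi^i\xi^j=(h_{ik}\xi^i)(h^k_j\xi^j)=0$, and the terms involving $V$ also vanish since $h_j^k\xi^j=0$. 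Hence $N_{ij}\xi^i\xi^j=0$ and \eqref{conditon-MP} holds trivially by taking $\Gamma=0$.

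For the oblique Neumann condition \eqref{eq-bou}, I combine Proposition \ref{s2:prop-second-fundamental-form} with the boundary identity \eqref{s3:DF} applied to $\mathcal{F}=-H$, which gives $\nabla_\mu H=\cot\theta\, h_{\mu\mu} H$. After diagonalizing $h$ at a boundary point (with $\mu$ principal, by Proposition \ref{s2:prop-second-fundamental-form}(1)), a null eigenvector is either a tangential direction $e_\alpha$ with $h_{\alpha\alpha}=0$, in which case Proposition \ref{s2:prop-second-fundamental-form}(4) gives $\nabla_\mu h_{\alpha\alpha}=\cot\theta\, h_{\alpha\alpha}(h_{\mu\mu}-h_{\alpha\alpha})=0$, or the co-normal $\mu$ with $h_{\mu\mu}=0$, in which case $\nabla_\mu H=0$ together with Proposition \ref{s2:prop-second-fundamental-form}(4) yields
\begin{equation*}
\nabla_\mu h_{\mu\mu}=\nabla_\mu H-\sum_\alpha\nabla_\mu h_{\alpha\alpha}=\cot\theta\sum_\alpha h_{\alpha\alpha}^2\geq 0.
\end{equation*}
Both cases verify \eqref{eq-bou}, so Theorem \ref{s1:thm-max principle} gives that $h_{ij}\geq 0$ is preserved along the flow.

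To upgrade weak to strict convexity at positive times, I would run a strong maximum principle for symmetric $2$-tensors in the spirit of Hamilton's splitting theorem. If $h_{ij}$ had a null eigenvector at some point $(p_0,t_0)$ with $t_0>0$, then at an interior $p_0$ the parabolic strong maximum principle applied to the smallest eigenvalue of $h$ (combined with a Bony/Hamilton argument on the parallel transport of its eigenspace) would produce a time-invariant parallel null distribution on a past neighborhood, giving a local Euclidean line factor that is incompatible with the compactness of $\Sigma_{t_0}$. At a boundary point, the identity $\nabla_\mu h_{\mu\mu}=\cot\theta\sum_\alpha h_{\alpha\alpha}^2$ combined with a Hopf-type boundary lemma propagates interior strict positivity up to the boundary. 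The main obstacle is precisely this boundary step: the classical tensor strong maximum principles of Hamilton and Stahl do not directly cover the oblique capillary setting, so one must carefully interleave the interior propagation of the null eigenspace with a scalar Hopf lemma adapted to the oblique condition, making essential use of the nonnegative sign $\cot\theta\geq 0$ guaranteed by $\theta\in(0,\pi/2]$.
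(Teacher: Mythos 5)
Your preservation of weak convexity is correct and matches the paper's computation: for $\mathcal{F}=-H$ the reaction term vanishes on a null eigenvector, and your verification of the oblique boundary condition \eqref{eq-bou} via Proposition \ref{s2:prop-second-fundamental-form} and the identity $\nabla_\mu H=\cot\theta\, h_{\mu\mu}H$ is exactly what is needed. However, your second step, upgrading weak to strict convexity for $t>0$, is left as an outline rather than a proof, and the obstacle you flag at the end is real. There is no off-the-shelf tensor strong maximum principle covering the oblique capillary boundary condition, and interleaving the Bony/Hamilton parallel null-distribution argument with a Hopf lemma at an oblique boundary is precisely the technical work the theorem is asking for; as written, this is a genuine gap.

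The paper avoids the tensor strong maximum principle altogether with a scalar comparison trick. One first observes (by comparing with a large sphere from above at the interior maximum of the height function) that $\Sigma_0$ has at least one interior point of strict convexity. One then picks a smooth function $\phi_0$ with $0\leq\phi_0\leq\min_{|\xi|=1}h_{ij}\xi^i\xi^j$ on $\Sigma_0$ and $\phi_0(y)>0$ at some interior $y$, and evolves $\phi$ by the scalar heat equation $\partial_t\phi=\Delta\phi+\nabla_V\phi$ with Neumann condition $\nabla_\mu\phi=0$. The scalar strong maximum principle (Stahl) gives $\phi>0$ everywhere for $t>0$. Then one applies the weak tensor maximum principle of Theorem \ref{s1:thm-max principle} to $M_{ij}=h_{ij}-\phi g_{ij}$, whose reaction term at a null eigenvector equals $|A|^2\phi\geq 0$, and whose boundary verification is the same computation you did; this yields $h_{ij}\geq\phi g_{ij}>0$ for $t>0$ on a short time interval, and then a bootstrap with $\tilde M_{ij}=h_{ij}-\varepsilon g_{ij}$ extends strict convexity to the whole existence interval. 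The point is that strictness is injected through the scalar strong maximum principle for $\phi$, where the oblique boundary causes no difficulty, rather than through a tensor strong maximum principle. To close the gap in your proposal you would need to either carry out this $\phi$-comparison construction or actually prove the tensor strong maximum principle with oblique boundary you are invoking.
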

	\begin{proof}
		Note that the height function $\langle x,e_{n+1}\rangle$ attains its global maximum in the interior point of $\Sigma$, we can find a strictly convex point in the interior by attaching a large sphere to $\Sigma$ from above.
		
		Let
		\begin{equation*}
			\chi(x,t)=\min_{|\xi|=1}{h_{ij}\xi^i\xi^j}.
		\end{equation*}
		Since $h_{ij}$ is smooth, the function $\chi(x,t)$ is Lipschitz continuous in space and by a cut-off function argument, we find a smooth function $\phi_0:M\to\mathbb{R}$ such that $0\leq\phi_0\leq\chi(x,0)$ and there exists an interior point $y$ such that $\phi_0(y)>0$. We extend the function $\phi_0$ to $\phi:M\times[0,\delta')\to\mathbb{R}$ by solving a linear parabolic PDE:
		\begin{equation}\label{flow-heat}
			\left\{\begin{aligned}
				\frac{\partial}{\partial t}\phi&=\Delta\phi+\nabla_V\phi, \qquad \text{in}\quad M \times[0,\delta'),\\
				\nabla_{\mu}\phi&=0 \qquad \text{on}\quad \partial M \times[0,\delta'),\\
				\phi(\cdot,0)&={\phi}_0(\cdot) \qquad \text{on} \quad M,
			\end{aligned}\right.
		\end{equation}
		where $\Delta$ and $\nabla$ are Laplacian operator and Levi-Civita connection with respect to the induced  metric on $\Sigma_t=x(M,t)$ of the flow \eqref{flow-mean}. The solution $\phi$ of \eqref{flow-heat} exists at least for a short time interval $[0,\delta')$. By the strong maximum principle for scalar functions (see \cite[Corollary 3.2]{Stahl1996-2}), we have $\phi>0$ for all $x\in M$ and $t\in(0,\delta')$.

We take $\tau=\frac{1}{2}\min\{\delta,\delta'\}$ and consider the tensor:
		\begin{equation}
			M_{ij}=h_{ij}-\phi g_{ij}
		\end{equation}
		for the time interval $t\in[0,\tau)$. By the construction of $\phi_0$, we see that $M_{ij}\geq 0$ at time $t=0$. We now apply the tensor maximum principle (i.e. Theorem \ref{s1:thm-max principle}) to deduce that $M_{ij}\geq 0$ is preserved along the flow \eqref{flow-mean}.
		
		By a direct computation using Proposition \ref{s3:prop-evol-general-flow} for $\mathcal{F}=-H$, we have:
		\begin{equation}
			\frac{\partial}{\partial t} M_{ij}=\Delta M_{ij}+\nabla_{V} M_{ij}+N_{ij},
		\end{equation}
		where
		\begin{align*}
			N_{ij}=&|A|^2 M_{ij}+|A|^2\phi g_{ij}-2HM_i^k M_{kj}\nonumber\\
&-2H\phi M_{ij}+M_j^k\nabla_{i}{V_k}+M_i^k\nabla_{j}{V_k}.
		\end{align*}
We easily see that whenever $M_{ij}\geq0$ and $M_{ij}\xi^i=0$ at a point, we have
		\begin{equation}
			N_{ij}\xi^i\xi^j=|A|^2\phi\geq 0,
		\end{equation}
and thus the condition \eqref{conditon-MP} is satisfied. The boundary condition \eqref{eq-bou} can be checked similarly as in Theorem \ref{s4:thm-preserving-strict-convex}, using Proposition \ref{s2:prop-second-fundamental-form} and the fact that
\begin{equation*}
  \nabla\mu H=\cot\theta h_{\mu\mu}H
\end{equation*}
on the boundary $\partial\Sigma_t$ (see \eqref{s3:DF}). Hence Theorem \ref{s1:thm-max principle} implies that $M_{ij}\geq 0$ is preserved along the flow \eqref{flow-mean} for time $t\in[0,\tau)$, and it follows that $\Sigma_t$ is strictly convex for time interval $(0,\tau)$.
		
		To show the strict convexity for the whole time interval $(0,\delta)$, we fix a time $t_0\in(0,\tau)$. Since $\Sigma_t$ is strictly convex at time $t_0$, then there exists a constant $\varepsilon>0$, such that $h_{ij}\geq \varepsilon g_{ij}$ holds everywhere on $\Sigma_{t_0}$. A similar procedure as above can be used to show that $\tilde{M}_{ij}=h_{ij}-\varepsilon g_{ij}\geq 0$ is preserved along the flow \eqref{flow-mean} for all time $t>t_0$ as long as the flow \eqref{flow-mean} exists, which finishes the proof.
	\end{proof}

\noindent\textbf{Data Availability Statement.} Data sharing not applicable to this article as no datasets were generated or
analysed during the current study.

\noindent\textbf{Conflict of interest.} On behalf of all authors, the corresponding author states that there is no conflict of
interest.

\end{document}